\DeclareSymbolFont{cyrletters}{OT2}{wncyr}{m}{n}
\DeclareMathSymbol{\Sha}{\mathalpha}{cyrletters}{"58}
\definecolor{refkey}{rgb}{1,1,1}
\definecolor{labelkey}{rgb}{1,1,1}
\definecolor{cite}{rgb}{0.9451,0.2706,0.4941}
\definecolor{ruri}{rgb}{0.0078,0.4022,0.8010}
\def\Aut{{\rm Aut}}
\def\Aut{{\rm Aut}}
\theoremstyle{plain}
\newtheorem{theorem}{Theorem}[section]
\newtheorem{proposition/example}[theorem]{Proposition/Example}
\newtheorem{proposition}[theorem]{Proposition}
\newtheorem{corollary}[theorem]{Corollary}
\newtheorem{lemma}[theorem]{Lemma}
\theoremstyle{definition}
\newtheorem{definition}[theorem]{Definition}
\newtheorem{remark}[theorem]{Remark}
\newtheorem{example}[theorem]{Example}
\newtheorem{conjecture/question}[theorem]{Conjecture/Question}
\newtheorem{remark/definition}[theorem]{Remark/Definition}
\newtheorem{definition/notation}[theorem]{Definition/Notation}
\numberwithin{equation}{section}
\theoremstyle{remark}
\numberwithin{equation}{section}
\begin{document}
\title{\textbf{Parahoric Reduction Theory of Formal Connections (or Higgs Fields)}}

\author{Zhi Hu}

\address{  \textsc{School of Mathematics and Statistics, Nanjing University of Science and Technology, Nanjing 210094, China}}

\email{halfask@mail.ustc.edu.cn}

\author{Pengfei Huang}

\address{ \textsc{Max Planck Institute for Mathematics in the Sciences, Inselstrasse 22, 04103 Leipzig, Germany}}

\email{pfhwangmath@gmail.com}

\author{Ruiran Sun}

\address{  \textsc{Department of Mathematical and Computational Sciences, University of Toronto Mississauga, Mississauga, ON, L5L 1C6}}

\email{sunruiran@gmail.com}

\author{Runhong Zong}

\address{ \textsc{School of Mathematics, Nanjing University, Nanjing  210093, China}}
\email{rzong@nju.edu.cn}

%\keywords{parahoric group, Moduli stack, Moduli space, Elementary transformation, Nonabelian Hodge correspondence}

%\subjclass[2020]{14D20, 14H60,  53C07}

\date{}

\begin{abstract}
In this paper, we establish the parahoric reduction theory of  formal connections (or Higgs fields) on  a formal principal bundle with  parahoric structures, which generalizes  Babbitt-Varadarajan's result for the case without parahoric structures \cite{bv} and Boalch's result for the case of regular singularity \cite{b}. As applications, we prove the equivalence  between    extrinsic definition and intrinsic definition of    regular singularity and   provide a criterion  of relative regularity  for  formal connections,  and also demonstrate a parahoric version of  
Frenkel-Zhu's Borel reduction theorem  of formal connections \cite{fz}.
\end{abstract}

\maketitle
\tableofcontents

\section{Introduction}
For a reductive algebraic group $G$ over a local field $\mathbb{K}$, the local Langlands
conjecture predicts that  an irreducible complex representation of the locally compact group $G(\mathbb{K})$ should correspond  to a Langlands parameter (i.e. a homomorphism $\varphi$ from the Weil-Deligne group of $\mathbb{K}$ to the complex Langlands dual group $^LG$) together with an irreducible representation $\rho$ of the component group of the centralizer of $\varphi$.
In the geometric Langlands world,  the role of Langlands parameters
is played by principal  $^LG$-bundles endowed with  connections over the formal punctured disc
 \cite{fg1,f}. These Langlands parameters are classified as unramified, tamely ramified, or wildly ramified, depending on the restriction of $\varphi$ on the inertia group of the Galois group $\mathrm{Gal}(\overline{\mathbb{K}}/\mathbb{K})$ being trivial, on the wild ramification subgroup of $\mathrm{Gal}(\overline{\mathbb{K}}/\mathbb{K})$ being trivial, or on the wild ramification subgroup being non-trivial, respectively \cite{f, f1}. In a geometric context, this classification is translated into the classification of formal connections according to their singularities. Specifically, these three classes correspond to formal connections without singularities, with regular singularities, and with irregular singularities, respectively \cite{ks}.
 
A more precise classification of Langlands parameters is given by the depth of $\varphi$, defined as the smallest integer $d$ such that $\varphi$ is trivial on the $r$-th ramification subgroup of $\mathrm{Gal}(\overline{\mathbb{K}}/\mathbb{K})$ for all $r > d$. On the other hand, Moy and Prasad defined depth in terms of $\mathbb{R}$-filtrations of the parahoric subgroups of $G(\mathbb{K})$ \cite{m1, m2}. It is expected that these two notions of depth coincide \cite{ab}. In the geometric setting, depth is translated into a certain invariant of formal connections called slope. Specifically, negative, zero, and positive slopes correspond to nonsingularity, regular singularity, and irregular singularity, respectively. As the local Langlands program,  there are also two approaches to defining slope: one is derived from the reduction theory of formal connections, and the other is from the geometric version of minimal $K$-type theory, as developed by Bremer and Sage \cite{bs}. Our first main result is related to the equivalence between these two definitions. This equivalence implies that the gauge group of regular formal connections is reduced to a parahoric subgroup of $G(\mathbb{K})$, thereby providing a bridge connecting regular singularities of formal connections defined intrinsically and extrinsically.

Let us briefly recall the reduction theory of formal connections, which is established through the fundamental work of Babbitt and Varadarajan \cite{bv}. They provide an effective algorithm for reducing formal connections to Levelt-Turrittin's canonical forms. This process is succinctly illustrated in the following flowchart. For further details, we recommend Herrero's comprehensive paper \cite{h}.

\tikzstyle{startstop} = [rectangle, rounded corners, minimum width = 2cm, minimum height=0.6cm,text centered, draw = black]
\tikzstyle{io} = [trapezium, trapezium left angle=70, trapezium right angle=110, minimum width=2cm, minimum height=0.8cm, text centered, draw=black]
\tikzstyle{process} = [rectangle, minimum width=2.3cm, minimum height=0.8cm, text centered, draw=black]
\tikzstyle{decision} = [diamond, aspect = 2, text centered, draw=black]
\tikzstyle{arrow} = [->,>=stealth]
\vspace*{-15pt}
\[
\begin{tikzpicture}[node distance=0.8cm]
\node[startstop](start){Start};
\node[process, below of = start, yshift = -0.9cm](in1){\makecell[l]{Formal $G$-connection\\ $A=\sum\limits_{r\geq-c, c>1}A^{(r)}z^rdz$}};
\node[process, below of = in1, yshift = -1cm](dec0){ $A_0:$ center part of $A$};
\node[process, below of = dec0, yshift = -1cm](dec1){ $A':=A-A_0=\sum\limits_{r\geq-c'}A^{(r)'}z^rdz$};
\node[process, below of = dec0, yshift = -1cm,xshift=5.6cm](dec6){ \makecell[l]{$A:=B$,\\
$G:=C_G(B^{(-c)})$}};
\node[process, below of = dec0, yshift = -1cm,xshift=-6cm](dec7){$A:=C$};
\node[decision, below of = dec1, yshift = -1.5cm](pro0){$c'>1$?};
\node[process, below of = dec1, yshift = -1.5cm,xshift=-4.2cm](dec01){\makecell[l]{\hspace{-5pt}$\text{Reduce to a}\hspace{-5pt}$\\
\hspace{-5pt}$\text{canonical form}\hspace{-5pt}$}};
\node[startstop, below of = dec01, yshift = -1.2cm](dec02){Stop};
\node[decision, below of = pro0, yshift = -1.5cm](pro1){$A^{(-c')'}$ is nilpotent ?};
\node[process, below of = pro1, yshift = -1.8cm](dec3){\makecell[l]{Reduce to $B=\sum\limits_{r\geq-c'}B^{(r)}z^rdz$, with\\
$B^{(r)}\in C_{\mathfrak{g}}(Q), \forall r>-c'$, $Q$ being nilpotent}};
\node[process, below of = pro0, yshift = -1.5cm,xshift=5.6cm](dec4){\makecell[l]{Reduce to $B=\sum\limits_{r\geq-c'}B^{(r)}z^rdz$,\\ with $B^{(r)}\in C_{\mathfrak{g}}(B^{(-c')}), \forall r$}};
\node[decision, below of = dec3, yshift = -1.5cm](pro2){$\Upsilon(B)\geq c'-1 ?$};
\node[process, below of = pro2, yshift = -1.5cm](out1){\makecell[l]{Reduce to a logarithmic form}};
\node[process, below of = dec3, yshift = -1.5cm,xshift=-6cm](dec5){\makecell[l]{Reduce to $C=\sum\limits_{r\geq-c''>-1}C^{(r)}z^rdz$, \\with $C^{(-c'')}\in B^{(-c')}+C_{\mathfrak{g}}(Q)$}};
\node[startstop, below of = out1, yshift = -0.6cm](stop){Stop};
\draw [arrow] (start) -- (in1);
\draw [arrow] (in1) -- (dec0);
\draw [arrow] (dec0) -- (dec1);
\draw [arrow] (dec01) -- (dec02);
\draw [arrow] (dec1) -- (pro0);
\draw [arrow] (pro0) -- node [above] {\color{red}{No}} (dec01);
\draw [arrow] (pro0) -- node [right] {\color{red}{Yes}} (pro1);
\draw [arrow] (pro1) -- node [right] {\color{red}{Yes}} (dec3);
\draw [arrow] (pro1) -- node [above] {\color{red}{No}} (dec4);
\draw [arrow] (pro2) -- node [above] {\color{red}{No}} (dec5);
\draw [arrow] (dec3) -- (pro2);
\draw [arrow] (dec4) -- (dec6);
\draw [arrow] (dec6) |- (in1);
\draw [arrow] (dec5) -- (dec7);
\draw [arrow] (dec7) |- (in1);
\draw [arrow] (pro2) -- node [right] {\color{red}{Yes}} (out1);
\draw [arrow] (out1) -- (stop);
\end{tikzpicture}
\]
Roughly speaking, one utilizes the "naive leading term" of formal connections, which reflects the apparent singularity, to produce suitable gauge transformations defined over   $\overline{\mathbb{K}}$. These transformations can eliminate the apparent singularities caused by nilpotent coefficients, leading to a reduction of the formal connections. Babbitt and Varadarajan's algorithm terminates after a finite number of steps by decreasing the dimensions of centralizers in the derived subalgebra and increasing the dimensions of certain nilpotent orbits. For higher-dimensional bases, it is unclear whether a generalization of Babbitt-Varadarajan's reduction algorithm exists. Remarkably, Mochizuki  showed that meromorphic flat connections on algebraic vector bundles over a smooth proper complex algebraic variety with a normal crossing divisor admit good formal structures via the  spectral decomposition of corresponding harmonic bundles obtained by wild version of Corlette theorem \cite{moc,moc1}; later, Kedlaya proved the same result in a more general situation by using  totally different methods \cite{ke,ke1}.  Their results  can be  regarded as a higher-dimensional generalization of the Hukuhara-Levelt-Turrittin Jordan-type decomposition of formal connections \cite{huk, l, k}. We also expect a $G$-version of Mochizuki-Kedlaya's theorem.

Another motivation for incorporating parahoric structures originates from Boalch's pioneering work on the $G$-version of (regular) Riemann-Hilbert correspondence \cite{b}, which generalizes Deligne's classical work \cite{de}. To deal with regular formal connections more algebraically, taking parahoric weights into account  is necessary. In particular, varying the parahoric weights upon the Bruhat-Tits building is indispensable, as in general, parahoric  formal connections under   a fixed parahoric weight only form  a subset of regular formal connections (cf. \cite[Section 5]{b}). Boalch also extended Babbitt-Varadarajan's reduction theory to parahoric formal connections, which is crucial for establishing his Riemann-Hilbert correspondence (cf. \cite[Theorem 6]{b}). This reduction theory is also crucial in our work for proving the equivalence between intrinsic regularity and extrinsic regularity of formal connections.

Introducing parahoric structures offers several other advantages. Firstly, it simplifies the handling of certain ramified irregular formal connections, as the nilpotent leading terms can be absorbed into the parahoric formal connection (i.e., the "very good" formal connection suggested by Boalch (cf. \cite[Formula 4]{bq})), instead of working on ramified covers. Secondly, the gauge group can be reduced to a smaller one, such as the Iwahori subgroup of $G(\mathbb{A})$.

In this paper, we generalize the work of Babbitt-Varadarajan and Boalch to formal connections (or Higgs fields) on formal principal bundles with parahoric structures. The analysis is divided into two cases based on the nilpotency of the constant term of the residue of the leading term. For each case, we show that the coefficients can be reduced to suitable centralizers through \textit{parahoric gauge transformations} (see Propositions \ref{x} and \ref{xx}). Notably, we consistently express a formal connection in terms of a \textit{$\Theta$-reduced form} for a chosen parahoric weight $\Theta$. We also present several applications. By employing the parahoric reduction theory for the non-nilpotent case, we introduce the notion of \textit{relative regularity} of formal connections and provide a criterion for it. The fundamental idea involves introducing extra irregular terms to eliminate the relatively naive irregularity of formal connections caused by nilpotent coefficients. The second application is to demonstrate the existence of a Borel reduction \textit{compatible with the parahoric structure} of formal connections, achieved through the parahoric reduction theory for the nilpotent case. This result offers a parahoric version of Frenkel-Zhu's theorem \cite{fz}. It is worth noting that Frenkel-Zhu's  theorem is viewed as an analogue to Drinfeld-Simpson's Borel reduction theorem of $G$-bundles \cite{ds}, and can also be deduced from the existence of possibly degenerate oper structure on  meromorphic $G$-connections over a smooth curve (see \cite[Corollary 7.3]{ar})\footnote{Arikin's result is revisited by wild nonabelian Hodge theory \cite{hh}.}.

\

Now we summarize our main results mentioned above as follows.

\begin{theorem}[= Corollary \ref{d4}, Theorem \ref{dd}]
Let $\mathsf{A}$ be a formal connection  on  a formal principal $G$-bundle $\mathbf{P}$. Then the followings are equivalent.
\begin{enumerate}
 \item There exist a trivialization $e$ of $\mathbf{P}$ and $\bar g\in G(\overline{\mathbb{K}})$ such that the $\widetilde{\mathrm{Ad}}_{\bar g}$-gauge transformation of $\mathsf{A}(e)$ is a logarithmic form, where $\overline{\mathbb{K}}$ is  the algebraic closure of $\mathbb{K}$.
 \item For
all representations $(V,\rho)$ consisting of a finite-dimensional vector space $V$ and  a homomorphism   $\rho:G\rightarrow GL(V)$, the induced connection $\mathsf{A}_\rho$ on the associated  vector bundle $\mathbf{P}_\rho=\mathbf{P}\times_\rho V$ is  regular, namely there is a trivialization $e$ of $\mathbf{P}_\rho$ such that $\mathsf{A}_\rho(e)$ is a logarithmic form.
 \item $\mathbf{P}$ is endowed with a $\Theta$-parahoric structure such that $\mathsf{A}$ is a $\Theta$-parahoric formal connection.
\end{enumerate}

\end{theorem}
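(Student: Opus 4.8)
The plan is to prove the equivalence by the cyclic chain of implications $(3)\Rightarrow(1)\Rightarrow(2)\Rightarrow(3)$, of which the first two links are comparatively soft and $(2)\Rightarrow(3)$ is the substantive one.

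We begin with $(3)\Rightarrow(1)$, where the parahoric reduction theory developed above does its work. Applying the reduction of Propositions~\ref{x} and~\ref{xx} to a $\Theta$-parahoric connection $\mathsf{A}$ brings it, through \emph{parahoric} gauge transformations, to a logarithmic $\Theta$-reduced form $d+M\frac{dz}{z}$. Every gauge transformation used lies in the parahoric subgroup --- hence in $G(\mathbb{K})$ when $\Theta$ is integral, and in $G(\mathbb{K}_n)\subset G(\overline{\mathbb{K}})$ after the degree-$n$ ramified base change $\mathbb{K}_n=\mathbb{K}(z^{1/n})$ when $\Theta$ has denominator $n$ --- so composing it with a trivialization $e$ of $\mathbf{P}$ produces the $\bar g$ of (1). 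For $G=GL_n$ this recovers Boalch's Theorem~6 of \cite{b}.

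For $(1)\Rightarrow(2)$, let $(V,\rho)$ be an arbitrary representation. The differential $d\rho$ is a homomorphism of Lie algebras carrying $\mathfrak{g}\otimes\mathcal{O}$ into $\mathfrak{gl}(V)\otimes\mathcal{O}$ and intertwining $\widetilde{\mathrm{Ad}}_{\bar g}$ with $\widetilde{\mathrm{Ad}}_{\rho(\bar g)}$, so in the trivialization $\rho(e)$ the connection $\mathsf{A}_\rho$ becomes logarithmic after the $\widetilde{\mathrm{Ad}}_{\rho(\bar g)}$-gauge transformation; that is, $\mathsf{A}_\rho$ is logarithmic over $\mathbb{K}_n$ for some $n$. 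Since regular singularity is insensitive to ramified base change --- a formal connection on a vector bundle is regular precisely when it becomes logarithmic after a finite gauge transformation over some $\mathbb{K}_n$ (Deligne~\cite{de}, Levelt--Turrittin) --- it follows that $\mathsf{A}_\rho$ is regular over $\mathbb{K}$.

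For $(2)\Rightarrow(3)$ one argues Tannakianly. By (2) every $\mathsf{A}_\rho$ is regular, hence carries --- besides its Deligne canonical lattice, pinned down by a choice of fundamental domain for $\mathbb{C}/\mathbb{Z}$ among the residue eigenvalues --- a Levelt $\mathbb{R}$-filtration by $\nabla_{z\partial_z}$-stable lattices. These filtrations are tensor-functorial in $\rho$, and by the Tannakian description of parahoric group schemes through Bruhat--Tits theory such a tensor-functorial system of $\mathbb{R}$-filtrations is precisely a point $\Theta$ of the rational Bruhat--Tits building, i.e.\ a $\Theta$-parahoric structure on $\mathbf{P}$; the $\nabla_{z\partial_z}$-stability of the lattices then says exactly that $\mathsf{A}$ is $\Theta$-parahoric. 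I expect this step to be the main obstacle: the naive Deligne lattices attached to one fixed fundamental domain are not tensor-compatible (residue eigenvalues add, a fundamental domain does not), so one must work with the full $\mathbb{R}$-filtration and match it, on the nose, with a point of the building --- concretely, by pulling back to the ramified cover $\mathbb{K}_n$ on which the residue eigenvalues become integral, taking the trivial ($\Theta=0$) model there, and descending the resulting $\mu_n$-equivariant lattice system via the equivalence between parahoric bundles on the formal disc and Galois-equivariant bundles on a ramified cover. This descent, together with the bookkeeping of weights across $\mathrm{Rep}(G)$, is where the parahoric machinery of the paper is essential; by contrast $(3)\Rightarrow(1)$ and $(1)\Rightarrow(2)$ use only the reduction algorithm and classical $GL_n$-theory.
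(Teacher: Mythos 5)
Your cyclic organization $(3)\Rightarrow(1)\Rightarrow(2)\Rightarrow(3)$ is legitimate in principle, and the middle link $(1)\Rightarrow(2)$ (push forward by $\rho$, then use the classical fact that regularity of a vector-bundle connection descends along finite ramified covers) is fine. The other two links have genuine gaps. For $(3)\Rightarrow(1)$ you invoke Proposition \ref{x} and its nilpotent companion, but those are reduction steps for $\Theta$-reduced representations of leading index $c>1$: they push coefficients into centralizers, do not apply to a $\Theta$-parahoric connection (which is already $\Theta$-logarithmic, so not of leading index $>1$), and do not output a logarithmic form. More fundamentally, every parahoric gauge transformation lies in $\widehat{P}_\Theta\subset G(\mathbb{K})$ (your ``hence in $G(\mathbb{K}_n)$ when $\Theta$ has denominator $n$'' conflates two different things) and preserves the shape $\hat X\frac{dz}{z}$ with $\hat X\in\widehat{\mathfrak{p}}_\Theta$; since $\widehat{\mathfrak{p}}_\Theta$ contains genuinely negative powers of $z$, no amount of parahoric gauging, nor a base change by itself, produces a logarithmic form. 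One must leave the parahoric group and twist by a $z^\Theta$-type element on a ramified cover, and because $\Theta\in\mathfrak{t}_\mathbb{R}$ need not be rational, this in turn requires first putting $\mathsf{A}$ into Boalch's normal form $[R,X_i]=iX_i$; the essential input is therefore the parahoric version of \cite[Theorem 6]{b}, which you never invoke, together with the cocharacter $\nu$ with $\alpha(\nu)=b\alpha(R)$ used in the paper's proof that Boalch-type connections are intrinsically regular.

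For $(2)\Rightarrow(3)$ what you offer is a program rather than a proof, as you concede, and beyond the tensor-compatibility problem you flag, the proposed descent breaks at the first step: residue eigenvalues of a regular connection are arbitrary complex numbers, so no finite cover $\mathbb{K}_n$ makes them integral, and the ``$\Theta=0$ model upstairs plus $\mu_n$-equivariant descent'' cannot reach the general case; the identification of tensor-functorial $\mathbb{R}$-filtrations with points of $\mathbf{BT}(\widehat{G})$ is likewise asserted, not established. The paper takes a different and much shorter route: by Bremer--Sage (Theorem \ref{z}), extrinsic regularity produces a depth-zero stratum $(x,0,\beta)$ with $x$ a rational point of the building, and the Moy--Prasad computation $\widehat{\mathfrak{g}}_{x,0}=\widehat{\mathfrak{p}}_x$, $\widehat{\mathfrak{g}}^{+}_{x,0}=\widehat{\mathfrak{u}}_x$ gives $\mathsf{A}(e)(\widehat{\mathfrak{p}}_x)\subset\widehat{\mathfrak{p}}_x\otimes\frac{dz}{z}$, i.e.\ $\mathsf{A}$ is $x$-parahoric (Theorem \ref{dd}); the converse $(3)\Rightarrow(2)$ is a one-line twist by an integral diagonal $z^\Xi$ in a faithful representation, and $(1)\Leftrightarrow(3)$ (Corollary \ref{d4}) comes from the Galois-cohomology classification of regular connections rather than any Tannakian filtration formalism. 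If you wish to keep your cyclic structure, the link $(3)\Rightarrow(1)$ should cite Boalch's theorem plus the explicit cover-and-twist, and $(2)\Rightarrow(3)$ should be replaced by the Bremer--Sage argument.
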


\begin{theorem}[= Corollary \ref{zw}]
Let $ \mathsf{A}$ be a formal connection on a  principal $G$-bundle $\mathbf{P}$, then  $ \mathsf{A}$ is relatively regular if and only if $\mathbf{P}$ is endowed with a $\Theta$-parahoric structure ($\Theta\in \mathfrak{t}_\mathbb{R}$) and  there are a formal connection $ \mathsf{B}$  on  $\mathbf{P}$, two $\Theta$-parahoric trivializations $e,e'$ of  $\mathbf{P}$ such that
                       \begin{align*}
                          \mathsf{B}(e)&=\hat Q+\mathsf{A}(e),\\
                           \mathsf{B}(e')&=\hat Q+\hat R,
                       \end{align*}
                       where $\hat Q=\sum\limits_{r=-c\leq-2}^{-2}Q_rz^r$ for $Q_r\in \mathfrak{t}$ being a regular semisimple element, $\hat R=\sum\limits_{r\geq -1}R_rz^r$ for $R_r\in \mathfrak{t}$.
\end{theorem}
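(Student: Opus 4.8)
The plan is to deduce the statement from the non-nilpotent parahoric reduction theory (Proposition~\ref{xx}) together with the definition of relative regularity: the role of the generic irregular term $\hat Q$ is to make the leading coefficient of $\hat Q+\mathsf{A}(e)$ regular semisimple, so that the reduction forces every coefficient into the Cartan $\mathfrak{t}=C_{\mathfrak{g}}(Q_{-c})$, leaving only a logarithmic tail $\hat R$. The backward implication is essentially formal: a $\Theta$-parahoric trivialization is in particular a trivialization of $\mathbf{P}$, so the two displayed identities exhibit the auxiliary connection $\mathsf{B}$ as an irregular shift of $\mathsf{A}$ which becomes $\hat Q+\hat R$ with $\hat R$ logarithmic after a change of trivialization, and this is exactly the data certifying that $\mathsf{A}$ is regular relative to the irregular type $\hat Q$.

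For the forward implication, assume $\mathsf{A}$ is relatively regular. Unwinding the definition we obtain a formal connection $\mathsf{B}$ on $\mathbf{P}$, a trivialization $e_0$, and $\hat Q_0=\sum_{r=-c}^{-2}Q_{0,r}z^r$ with $Q_{0,-c}\in\mathfrak{t}$ regular semisimple, such that $\mathsf{B}(e_0)=\hat Q_0+\mathsf{A}(e_0)$ while in some other trivialization $\mathsf{B}$ has the shape $\hat Q_0+(\text{logarithmic})$. The leading coefficient of $\mathsf{B}(e_0)$ at pole order $c>1$ is $Q_{0,-c}$, which is non-nilpotent, and in fact regular semisimple, so $C_{\mathfrak{g}}(Q_{0,-c})=\mathfrak{t}$ (and one may additionally arrange all of $Q_{0,-c},\dots,Q_{0,-2}$ to be regular semisimple, these forming a Zariski-dense open set). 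Proposition~\ref{xx} now applies to $\mathsf{B}$: there exist a parahoric weight $\Theta\in\mathfrak{t}_\mathbb{R}$, a $\Theta$-parahoric structure on $\mathbf{P}$, and a $\Theta$-parahoric trivialization $e'$ such that $\mathsf{B}(e')=\sum_{r\geq-c}B'_r z^r$ with every $B'_r\in\mathfrak{t}$ and $B'_{-c}=Q_{0,-c}$ (absorbing the conjugating element of the torus normalizer into $e'$). Writing $\hat Q:=\sum_{r=-c}^{-2}B'_r z^r$ and $\hat R:=\sum_{r\geq-1}B'_r z^r$ gives $\mathsf{B}(e')=\hat Q+\hat R$ of exactly the asserted form.

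It remains to produce a $\Theta$-parahoric trivialization $e$ with $\mathsf{B}(e)=\hat Q+\mathsf{A}(e)$, i.e.\ with the tensorial difference $\mathsf{B}-\mathsf{A}$ equal to $\hat Q$ in the frame $e$. The idea is to run the reduction procedure underlying Proposition~\ref{xx} in parallel on $\mathsf{B}$ and on $\mathsf{A}$: every parahoric gauge transformation and every change of trivialization used to bring $\mathsf{B}$ to $\hat Q+\hat R$ is applied simultaneously to $\mathsf{A}$, and $e$ is defined as the image of $e_0$ under this same sequence. Because the transformations reducing a connection with regular semisimple leading term $Q_{0,-c}$ into $\mathfrak{t}$-valued form are built from elements of $C_G(Q_{0,-c})$ up to terms of sufficiently high $z$-order, their action on the fixed irregular tail (of pole order at most $c$) alters it only in pole orders $\leq 1$, hence only inside $\hat R$; so $\mathsf{B}-\mathsf{A}$ keeps its prescribed value $\hat Q$, and one checks that the same parahoric weight $\Theta$ works for both $e$ and $e'$ up to the equivalence on the Bruhat--Tits building. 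I expect this compatibility bookkeeping to be the main obstacle: one must revisit the proof of Proposition~\ref{xx} to confirm that the reducing transformations can be taken to fix $\hat Q$ and to be governed by a single parahoric weight simultaneously for the two trivializations; the remaining points (density of regular semisimple elements and $C_{\mathfrak{g}}(Q_{-c})=\mathfrak{t}$) are routine.
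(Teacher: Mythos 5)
There is a genuine gap, and it starts with the definition: your proof never engages with the paper's actual notion of relative regularity. In the paper, $\mathsf{A}$ is relatively regular if either it is (intrinsically/extrinsically) regular, or $\mathbf{P}$ carries a $\Theta$-parahoric structure and in some $\Theta$-parahoric trivialization the $\Theta$-reduced representation of $\mathsf{A}$ has leading index $c>1$ with the relevant constant coefficients in the irregular range \emph{nilpotent}. In your forward direction you say that ``unwinding the definition'' already produces an auxiliary connection $\mathsf{B}$, a frame $e_0$ and an irregular type $\hat Q_0$ with $\mathsf{B}(e_0)=\hat Q_0+\mathsf{A}(e_0)$ and $\mathsf{B}$ of the shape $\hat Q_0+(\text{logarithmic})$ in another frame; but that is, up to the parahoric and torus refinements, exactly the criterion to be proved, not the definition, so the argument is circular. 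Symmetrically, your backward direction declares the two displayed identities to be ``exactly the data certifying'' relative regularity without ever deriving from them either the regularity of $\mathsf{A}$ or the nilpotency of its irregular constant coefficients; since $\mathsf{B}-\mathsf{A}$ transforms adjointly, one has $\mathsf{A}(e')=\hat Q+\hat R-\mathrm{Ad}_{\hat g}(\hat Q)$ with $\hat g=e'\circ e^{-1}\in\widehat{P}_\Theta$, and extracting the required nilpotency from this needs Proposition \ref{1} and the special form of $\hat Q$ --- it is not formal.

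The mechanism that actually drives the paper's proof is also missing. The paper fixes $\hat Q$ with \emph{all} $Q_r\in\mathfrak{t}$ regular semisimple, defines $\mathsf{B}(e):=\hat Q+\mathsf{A}(e)$ in the $\Theta$-parahoric trivialization furnished by the definition (or by Corollary \ref{d4} and Theorem \ref{dd} in the regular case), and then uses the observation that for $t$ regular semisimple and $N$ nilpotent in $C_{\mathfrak{g}}(\Theta)$ one can solve $[t,N']+N=0$ with $N'$ nilpotent, together with Theorem \ref{pq}, to absorb the nilpotent coefficients of $\mathsf{A}$ by parahoric gauge transformations while keeping the irregular part equal to $\hat Q$; no transport of a frame parallel to a separate reduction of $\mathsf{B}$ is needed. (Also, the non-nilpotent reduction you invoke is Proposition \ref{x}/Proposition \ref{xxx} and Theorem \ref{pq}, not Proposition \ref{xx}, which is the nilpotent-leading-coefficient case.) Your alternative --- reduce $\mathsf{B}$ first and define $\hat Q$ a posteriori as the irregular part of the reduced form --- loses both the identity $\mathsf{B}(e)=\hat Q+\mathsf{A}(e)$ (which you try to restore by the ``parallel reduction'' bookkeeping that you yourself flag as unverified; this is the crux, not a routine check) and the regular semisimplicity of the lower-order coefficients $Q_r$, $-c<r\le -2$, which the statement requires. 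Finally, your opening claim that $\hat Q$ makes the leading coefficient of $\hat Q+\mathsf{A}(e)$ regular semisimple is not immediate: that coefficient is $Q_{-c}$ plus a nilpotent term that need not commute with $Q_{-c}$, and handling exactly this is what the nilpotency hypothesis in the definition and the $[t,N']+N=0$ device are for.
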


\begin{theorem}[= Corollary \ref{xzz}]
Let  $ \mathsf{A}$ be a formal connection on a   formal parahoric principal $G$-bundle $(\mathbf{P}, \Theta, \mathcal{P})$  with a $\Theta$-reduced representation
\begin{align*}
 \mathsf{A}(e)=\sum\limits_{r\geq -c}\hat A^{(r)}z^{r}dz
\end{align*}  under a $\Theta$-parahoric  trivialization $e$, where $c>1$. Then there is   $\hat g \in \widehat{G}:=G(\mathbb{K})$ such that $\mathsf{A}(\hat ge)$ has a $\Theta$-reduced representation under the trivialization $\hat ge$
\begin{align*}
  \mathsf{A}(\hat ge)= \sum\limits_{r\geq -c'}\hat B^{(r)}z^{r}dz
\end{align*}with $\hat B^{(r)}=\sum\limits_{\lambda+i\geq 0}\sum\limits_{i\in\mathbb{Z}}X^{(r)}_{\lambda,i}z^i$
satisfying
\begin{itemize}
  \item $c'=\left\{
             \begin{array}{ll}
              c , & \hbox{$\mathrm{Res}_0(\hat A^{(-c)})$\textrm{ is a  nilpotent element in  } $\mathfrak{g}$;} \\
              c+1, & \hbox{otherwise,}
             \end{array}
           \right.
  $
  \item $\mathrm{Res}_0(\hat B^{(-c')})$ is a nilpotent element in $\mathfrak{g}$,
  \item all  $X^{(r)}_{\lambda,i}$' lie in a Borel subalgebra of $\mathfrak{g}$ except $X^{(-c)}_{0,0}=\mathrm{Res}_0(\hat B^{(-c')})$.
\end{itemize}

\end{theorem}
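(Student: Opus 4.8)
The statement should follow by applying the parahoric reduction steps of Propositions \ref{x} and \ref{xx} to the leading coefficient of $\mathsf{A}(e)$ and then reorganizing the output relative to a Borel, so the plan is to run exactly one reduction step and do the bookkeeping. First I would split on the Jordan type of $R:=\mathrm{Res}_0(\hat A^{(-c)})$. If $R$ is nilpotent, I would invoke the nilpotent-case reduction: it produces $\hat g\in\widehat{G}$ such that $\mathsf{A}(\hat g e)$ has a $\Theta$-reduced representation with the \emph{same} polar order $c'=c$, with leading constant residue $Q:=\mathrm{Res}_0(\hat B^{(-c)})$ still nilpotent, and with every higher coefficient $\hat B^{(r)}$ for $r>-c$ taking values in the centraliser $C_{\mathfrak{g}}(Q)$. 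If $R$ is not nilpotent, with Jordan decomposition $R=S+N$ and $S\neq 0$, I would first apply the non-nilpotent-case reduction to conjugate $\mathsf{A}$ so that all coefficients land in $C_{\mathfrak{g}}(S)$ and the leading residue becomes semisimple, and then perform a further parahoric gauge transformation that demotes $S$ by one polar order while installing a new, genuinely nilpotent leading term at order $z^{-(c+1)}$; this is the source of the value $c'=c+1$ in the non-nilpotent case. In both cases one is left with a $\Theta$-reduced $\mathsf{A}(\hat g e)=\sum_{r\geq -c'}\hat B^{(r)}z^{r}dz$ whose constant leading residue $Q$ is nilpotent and whose remaining coefficients lie in $C_{\mathfrak{g}}(Q)$.

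It then remains to push the coefficients into a single Borel. Here I would use Jacobson--Morozov to build an $\mathfrak{sl}_2$-triple $(Q,H,Q^{+})$, recall that $\mathrm{ad}(Q)$ is injective on the negative $\mathrm{ad}(H)$-eigenspaces so that $C_{\mathfrak{g}}(Q)\subseteq\mathfrak{p}:=\bigoplus_{j\geq 0}\mathfrak{g}_{j}(\mathrm{ad}\,H)$, and choose a Borel $\mathfrak{b}\subseteq\mathfrak{p}$ compatible with the parahoric weight $\Theta$: one containing $H$ and all of $\bigoplus_{j>0}\mathfrak{g}_{j}(\mathrm{ad}\,H)$, and restricting to a Borel of the Levi $\mathfrak{g}_{0}(\mathrm{ad}\,H)$ adapted to the $\Theta$-grading. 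A final parahoric gauge transformation valued in the unipotent radical of $\mathfrak{p}$, combined with a reduction internal to $\mathfrak{g}_{0}(\mathrm{ad}\,H)$, should then move every graded piece $X^{(r)}_{\lambda,i}$ with $(r,\lambda,i)\neq(-c,0,0)$ into $\mathfrak{b}$, while the leading constant residue $X^{(-c)}_{0,0}=Q$ is in general not contained in the particular Borel $\mathfrak{b}$ selected to accommodate the other coefficients, so it is recorded as the single exception. Throughout, the fact that each transformation lies in $\widehat{G}=G(\mathbb{K})$ rather than merely in $G(\overline{\mathbb{K}})$ and respects the parahoric trivialisation attached to $\mathcal{P}$ is precisely what Propositions \ref{x} and \ref{xx} supply, together with the requirement that the shearing cocharacter used in the demotion step be $\Theta$-admissible.

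The main obstacle is the non-nilpotent case, where three requirements are interlocked: the extra gauge transformation demoting $S$ must (i) produce a leading term at order $z^{-(c+1)}$ that is genuinely nilpotent, (ii) keep the connection in $\Theta$-reduced form, and (iii) be realised inside $\widehat{G}$ and compatibly with $\mathcal{P}$. A natural candidate is conjugation by an element of the form $\exp(Yz^{-c})$ with $Y$ a nilpotent element of $C_{\mathfrak{g}}(S)$, but then one must verify that $[Y,\cdot]$ does not create a more singular term than $z^{-(c+1)}$, which pins down the admissible $Y$ and forces a compatible choice of $\Theta$; making all of (i)--(iii) hold simultaneously is the crux. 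A secondary difficulty is the very last step: checking that one genuinely lands inside a Borel and not merely inside the parabolic $\mathfrak{p}$, which requires the reduction internal to $\mathfrak{g}_{0}(\mathrm{ad}\,H)$ to be carried out compatibly with $\Theta$ and to terminate, together with a clean argument pinning down the leading residue $X^{(-c)}_{0,0}$ as the unique component that the construction leaves outside the chosen Borel.
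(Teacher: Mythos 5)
There is a genuine gap: your argument never makes the leading residue a \emph{regular} nilpotent, and without regularity the final passage from a parabolic to a Borel does not go through. The paper's proof of Corollary \ref{xzz} is Proposition \ref{xx} \emph{plus} Theorem \ref{xz}, and Theorem \ref{xz} is the heart: it upgrades a nilpotent value of $\chi_0$ to a regular nilpotent one by a global argument on the parahoric affine Springer fiber $N_{\Theta_I}(\mathsf{A},e;-c)$, using its finite-dimensionality (Proposition \ref{cv}, valid only because $\mathsf{A}$ is a connection and not a Higgs field) together with the Kazhdan--Lusztig action of the affine Weyl group on Borel--Moore homology, following Frenkel--Zhu. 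Once $\mathrm{Res}_0(\hat B^{(-c')})$ is regular nilpotent, the Jacobson--Morozov triple $(P,Q,H)$ through it is principal, so $C_{\mathfrak{g}}(Q)$ consists of highest-weight vectors contained in the Borel determined by $H$ and $Q$, and Proposition \ref{xx} then places every coefficient except the leading residue inside that Borel with no further work. In your scheme the nilpotent $Q$ may be arbitrary, so $C_{\mathfrak{g}}(Q)$ only lands in the parabolic $\bigoplus_{j\geq 0}\mathfrak{g}_j(\mathrm{ad}\,H)$, and its degree-zero part $C_{\mathfrak{g}}(P,Q,H)$ is reductive; your proposed ``reduction internal to $\mathfrak{g}_0(\mathrm{ad}\,H)$'' would have to put infinitely many coefficients valued in that reductive subalgebra simultaneously into a Borel, which is exactly the original problem over again and is not achievable by the local gauge manipulations of Propositions \ref{x} and \ref{xx}. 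The fact that your argument nowhere uses that $\mathsf{A}$ is a connection rather than a Higgs field is a symptom: the homological step is precisely where that hypothesis enters.

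The non-nilpotent case is also mishandled. Conjugating by $\exp(Yz^{-c})$ with $Y\in C_{\mathfrak{g}}(S)$ produces brackets against $\hat A^{(-c)}z^{-c}$ of order $z^{-2c}$, far worse than $z^{-(c+1)}$ when $c>1$, so the proposed ``demotion'' of $S$ does not yield a $\Theta$-reduced representation of order $c+1$, let alone one with nilpotent (still less regular nilpotent) leading residue; you flag this as the crux but do not resolve it. In the paper, $c'=c+1$ is obtained with no shearing at all: the identity element lies in $M_\Theta(\mathsf{A},e;-(c+1))$ with $\chi_0=0$, which is nilpotent because the leading index $-c$ is strictly larger than $-(c+1)$, and Theorem \ref{xz} applied at level $c+1$ produces $\hat g$ with $\chi_0(\hat g)$ regular nilpotent, after which Proposition \ref{xx} finishes as above.
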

\

\noindent\textbf{Acknowledgements}.
The authors would like to express their gratitude to Professor P. Boalch, Professor A. Herrero, and Professor D. Sage for their insightful communications and assistance in understanding their deep work. They also thank Professor J.-Z. Han, Professor C. Simpson, and Professor K. Zuo for their valuable discussions. 

Huang acknowledges fundings from the European Research Council (ERC) under the European Union's Horizon 2020 research and innovation program (grant agreement No 101018839) and Deutsche Forschungsgemeinschaft (DFG, Projektnummer 547382045). Zong is supported by the National Natural Science Foundation of China (Key Program 12331002).

\section{Parahoric Subgroups}
In this section, we briefly recall some  basic knowledge of parahoric subgroups.
Let $\mathbb{K}=\mathbb{C}(\!(z)\!)$ be the  field of formal Laurent series over $\mathbb{C}$ with the ring $\mathbb{A}=\mathbb{C}[\![z]\!]$ of integers, where $z$ denotes a uniformizing parameter.  For a positive natural number $b$, let $\mathbb{K}_b$ be the finite Galois
extension of $\mathbb{K}$ with Galois group canonically isomorphic to  the group $\mu_b$ of $b$-roots
of unity, and we define $\overline{\mathbb{K}}=\bigcup\limits_{b\in \mathbb{Z}^{>0}}\mathbb{K}_b$ as the algebraic closure of $\mathbb{K}$.
Let $G$ be a connected complex reductive Lie group  with the Lie algebra $\mathfrak{g}$, and we set $\widehat{G}=G(\mathbb{K})$, the group of $\mathbb{K}$-points of $G$,  and $\widehat{\mathfrak{g}}=\mathfrak{g}\otimes_{\mathbb{C}}\mathbb{K}$.
 A subgroup conjugate to the
preimage of a Borel subgroup $B$ of ${G}$ under the residue map $G(\mathbb{A})\rightarrow
G$ (i.e. taking $z$ to 0) is called an Iwahori subgroup. A proper subgroup of $\widehat{G}$ that is a finite union of double cosets of an Iwahori subgroup  is called a parahoric subgroup. In particular, a parahoric subgroup is a compact open subgroup of $\widehat{G}$ containing an Iwahori subgroup.

Parahoric subgroups can be described by  the theory of Bruhat-Tits building.
 Let $T\subset G$ be a maximal torus and the  Lie algebras of $T$ and $G$ are denoted by $\mathfrak{t}$ and $ \mathfrak{g}$, respectively.
Let $N_T$ be the  normalizer of $T$, so that the Weyl group $W$ of $G$ is isomorphic to $N_T/T$. Let $E^*_T=\mathrm{Hom}(T,\mathbb{C}^*)$ be the group of characters of $T$, which is regarded as a lattice in $\mathfrak{t}^*$ by the canonical embedding $\chi\mapsto d\chi$, and similarly let $(E_T)_*=\mathrm{Hom}(\mathbb{C}^*,T)$ be the group of cocharacters  of $T$, which is regarded as a lattice in $\mathfrak{t}$ by the  canonical embedding $\nu\mapsto d\nu(1)$. The root decomposition of $\mathfrak{g}$ is given by $\mathfrak{g}=\bigoplus\limits_{\alpha \in\mathfrak{t}^*} \mathfrak{g}_\alpha$, where $\mathfrak{g}_0=\mathfrak{t}$ and $\mathfrak{g}_\alpha$ is the corresponding root space when $\alpha$ lies in the root system $\bigtriangleup$  of $\mathfrak{g}$.  A standard apartment  $\mathcal{A}$ is an affine space isomorphic to $\mathfrak{t}_{\mathbb{R}}:=(E_T)_*\otimes_{\mathbb{Z}}\mathbb{R}$.  $\mathcal{A}$ carries a cell structure, more precisely,  the facets are intersections of half-spaces determined by the affine hyperplanes $\{\Theta\in \mathcal{A}:\alpha(\Theta)=j\}$ for some $\alpha\in\bigtriangleup, j\in \mathbb{Z}$. The affine Weyl group $\widehat{W}:=N_{T}(\mathbb{K})/T(\mathbb{A})=W\ltimes(E_T)_*$ acts on the apartment $\mathcal{A}$ by affine transformation, more explicitly, $W$ acts on $\mathcal{A}$ via the adjoint action and the action of  $ (E_T)_*$  on $\mathcal{A}$ is a translation  as $z^\nu\cdot \Theta=\Theta-\nu$. Obviously, the action of the affine Weyl group preserves the cell structure on $\mathcal{A}$.

 Given $\Theta\in \mathcal{A}$, which is called a weight, the corresponding extended parahoric subgroup $\widehat{P}_\Theta$ of $\widehat{G}$ is defined as
\begin{align*}
 \widehat{P}_\Theta=\{\hat g\in \widehat{G}: z^\Theta \hat gz^{-\Theta}\textrm{ has  a limits as}\ z\rightarrow0\textrm{ along any ray}\}.
\end{align*}
   More explicitly, $\widehat{P}_\Theta$ is generated by \cite{b}
\begin{itemize}
  \item elements in $\widehat{L}_\Theta=\{z^\Theta hz^{-\Theta}\}$, where $h$ lies in the centralizer $C_G(\Theta)$  of $\exp(2\pi\sqrt{-1}\Theta)$ in $G$,
  \item elements of the form $\exp{(Yz^i)}$, where $Y\in \mathfrak{g}_\alpha$ with $\alpha(\Theta)+i>0$ or $Y\in \mathfrak{t}, i>0$,
  \item elements of the form $\exp{(z^N\hat Y)}$, where $N$ is a sufficiently
large integer  and $\hat Y$ is a formal power series valued in $\mathfrak{g}$.
\end{itemize}
Therefore,  $\widehat{P}_\Theta$ is the semiproduct of the Levi subgroup $\widehat{L}_\Theta\simeq C_G(\Theta)$ and the pro-unipotent radical $\widehat{U}_\Theta=\{\hat g\in \widehat{G}: z^\Theta \hat gz^{-\Theta}\textrm{ tends to 1  along any ray}\}$.
Let $\widehat{\mathcal{L}}_\Theta$ be the subgroup of $\widehat{L}_\Theta$ corresponding to the identity component of $C_G(\Theta)$, then the parahoric subgroup $\widehat{\mathcal{P}}_\Theta$ associated the weight $\Theta$ is defined as the group generated by $\widehat{\mathcal{L}}_\Theta$ and $\widehat{U}_\Theta$.
The Lie algebra $\mathfrak{g}$ decomposes as
$\mathfrak{g}=\bigoplus\limits_{\lambda\in \mathbb{R}}\mathfrak{g}^{(\Theta)}_{\lambda}$, where $\mathfrak{g}^{(\Theta)}_{\lambda}$ is the $\lambda$-eigenspace of the action $\mathrm{ad}_\Theta$, then
the Lie algebra of $\widehat{P}_\Theta$ and $\widehat{\mathcal{P}}_\Theta$ is given by
 \begin{align*}
   \widehat{\mathfrak{p}}_\Theta=\{\sum\limits_{\lambda+i\geq 0}\sum\limits_{i\in \mathbb{Z}} X_{\lambda,i}z^i\in \widehat{\mathfrak{g}}: X_{\lambda,i}\in \mathfrak{g}^{(\Theta)}_{\lambda}\}=\widehat{\mathfrak{l}}_\Theta\oplus \widehat{\mathfrak{u}}_\Theta,
 \end{align*}
 where
 \begin{align*}
    \widehat{\mathfrak{l}}_\Theta&=\{\sum\limits_{\lambda+i=0}\sum\limits_{i\in \mathbb{Z}} X_{\lambda,i}z^i\in \widehat{\mathfrak{g}}: X_{\lambda,i}\in \mathfrak{g}^{(\Theta)}_{\lambda}\},\\
  \widehat{\mathfrak{u}}_\Theta&=\{\sum\limits_{\lambda+i>0}\sum\limits_{i\in \mathbb{Z}} X_{\lambda,i}z^i\in \widehat{\mathfrak{g}}: X_{\lambda,i}\in \mathfrak{g}^{(\Theta)}_{\lambda}\}
 \end{align*}
 are the Lie algebras of $\widehat{L}_\theta$ ($\widehat{\mathcal{L}}_\theta$) and $\widehat{U}_\theta$, respectively.

 The Bruhat-Tits building $\mathbf{BT}(\widehat{G})$ associated to $\widehat{G}$ is defined as the quotient of $\widehat{G}\times\mathcal{A}$ by the equivalent relation that  $(\hat g,\Theta)\sim (\hat h, \Xi)$ iff there exists some $n\in N_{T}(\mathbb{K})$ such that $\Xi=n\cdot \Theta$ and $\hat g^{-1}\hat h n\in \widehat{P}_\Theta$ \cite{be}.
 $\mathbf{BT}(\widehat{G})$ is covered  by translations of the standard apartments. These are the apartments of $\mathbf{BT}(\widehat{G})$ inherit cell structures, and they  are parameterized by the split maximal tori in $\widehat{G}$. Obviously,  $\widehat{G}$ acts on $\mathbf{BT}(\widehat{G})$, then given $x\in\mathbf{BT}(\widehat{G})$, one defines the stabilizer of $x$ as $\mathrm{St}(x)=\{\hat g\in \widehat{G}: \hat g\cdot x=x\}$. In particular, when viewing $\Theta\in \mathcal{A}$ as a point $x$ lying in $\mathbf{BT}(\widehat{G})$, we have $\mathrm{St}(x)=\widehat{P}_\Theta$ (\cite[Lemma 12]{b}).  The  parahoric subgroup $\widehat{\mathcal{P}}_x$ of $\widehat{G}$ associated to the point $x\in\mathbf{BT}(\widehat{G}) $ is defined as the  identity component of  $\mathrm{St}(x)$. Hence, the parahoric subgroups are in bijective correspondence with the
facets of $\mathbf{BT}(\widehat{G})$. In particular, an Iwahori subgroup is the  stabilizer of a  facet with maximal dimension (i.e. an alcove) of  $\mathbf{BT}(\widehat{G})$. Fixing an alcove $\mathcal{F}$,  if we are happy to work modulo $\widehat{G}$-conjugation, we only have to work with facets in the closure $\bar{\mathcal{F}}$ of  $\mathcal{F}$ since every facet  of $\mathbf{BT}(\widehat{G})$ lies in a  $\widehat{G}$-orbit of some facet of $\bar{\mathcal{F}}$.

\begin{definition}
For $\hat X=\sum\limits_{\lambda+i\geq 0}\sum\limits_{i\in \mathbb{Z}} X_{\lambda,i}z^i\in \widehat{\mathfrak{p}}_\Theta$, the residue $ \mathrm{Res}(\hat X)$ of  $\hat X$ is defined as its Levi part, namely
\begin{align*}
 \mathrm{Res}(\hat X)=\sum\limits_{\lambda+i= 0} X_{\lambda,i}.
\end{align*}
\end{definition}

\begin{proposition}\label{1}
For any $\hat g\in \widehat{P}_\Theta$, there exists $h\in C_G(\Theta)$ such that   $\mathrm{Res}(\mathrm{Ad}_{\hat g}\hat X)=\mathrm{Ad}_{h}(\mathrm{Res}(\hat X))$.
\end{proposition}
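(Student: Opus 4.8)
The plan is to reduce the statement to the structure theory of $\widehat P_\Theta$ as the semidirect product $\widehat P_\Theta = \widehat L_\Theta \ltimes \widehat U_\Theta$ recorded above. First I would observe that the residue map $\mathrm{Res}\colon \widehat{\mathfrak p}_\Theta \to \widehat{\mathfrak l}_\Theta$ is, after identifying $\widehat{\mathfrak l}_\Theta$ with $\mathfrak c_{\mathfrak g}(\Theta) := \bigoplus_{\lambda}\{X_{\lambda,-\lambda}\}$ via $X_{\lambda,-\lambda}z^{-\lambda}\mapsto X_{\lambda,-\lambda}$ (only integer shifts occur, so this is really the $\lambda$-integer piece), simply the projection onto the Levi summand along $\widehat{\mathfrak u}_\Theta$. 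The key formal fact I need is that this projection is $\mathrm{Ad}$-equivariant for the action of $\widehat L_\Theta$ on $\widehat{\mathfrak p}_\Theta$, i.e.\ $\mathrm{Res}(\mathrm{Ad}_{\hat\ell}\hat X) = \mathrm{Ad}_{\hat\ell}(\mathrm{Res}(\hat X))$ for $\hat\ell\in\widehat L_\Theta$, together with the fact that $\mathrm{Ad}_{\hat u}$ for $\hat u\in\widehat U_\Theta$ acts trivially on $\mathrm{Res}$, i.e.\ $\mathrm{Res}(\mathrm{Ad}_{\hat u}\hat X) = \mathrm{Res}(\hat X)$.

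For the first fact: conjugation by $\widehat L_\Theta = \{z^\Theta h z^{-\Theta} : h\in C_G(\Theta)\}$ preserves the grading $\widehat{\mathfrak g} = \bigoplus_{\mu\in\mathbb R}\widehat{\mathfrak g}[\mu]$ where $\widehat{\mathfrak g}[\mu]$ is the $\mu$-eigenspace of $\mathrm{ad}$ of the grading element implicit in $z^\Theta(\cdot)z^{-\Theta}$ (equivalently, $X_{\lambda,i}z^i$ has degree $\lambda+i$); indeed an element $z^\Theta h z^{-\Theta}$ has degree $0$ for this grading since $h$ centralizes $\Theta$. Hence $\mathrm{Ad}_{\hat\ell}$ preserves both $\widehat{\mathfrak l}_\Theta = \widehat{\mathfrak g}[0]\cap\widehat{\mathfrak p}_\Theta$ and $\widehat{\mathfrak u}_\Theta = \bigoplus_{\mu>0}\widehat{\mathfrak g}[\mu]$, so it commutes with the projection $\mathrm{Res}$. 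Tracing the identification $\widehat L_\Theta\simeq C_G(\Theta)$ through, conjugation by $z^\Theta h z^{-\Theta}$ on the Levi part corresponds exactly to $\mathrm{Ad}_h$ on $\mathfrak c_{\mathfrak g}(\Theta)$ — one checks this on each graded piece $X_{\lambda,-\lambda}z^{-\lambda}$, where $z^\Theta h z^{-\Theta}$ acts by $(\mathrm{Ad}_h X_{\lambda,-\lambda})z^{-\lambda}$ since $z^{-\Theta}$ commutes with $z^{-\lambda}$. For the second fact: write $\hat u = \exp(\hat n)$ with $\hat n\in\widehat{\mathfrak u}_\Theta$ of strictly positive degrees (using that $\widehat U_\Theta$ is pro-unipotent, so $\log$ is defined); then $\mathrm{Ad}_{\hat u}\hat X = \hat X + [\hat n,\hat X] + \tfrac12[\hat n,[\hat n,\hat X]]+\cdots$, and since $\hat n$ raises degree by a positive amount while $\mathrm{Res}$ reads off the degree-$0$ component, every bracket term contributes $0$ to $\mathrm{Res}$, giving $\mathrm{Res}(\mathrm{Ad}_{\hat u}\hat X) = \mathrm{Res}(\hat X)$.

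Finally, given arbitrary $\hat g\in\widehat P_\Theta$, write $\hat g = \hat\ell\,\hat u$ with $\hat\ell\in\widehat L_\Theta$ and $\hat u\in\widehat U_\Theta$ (the semidirect product decomposition), let $h\in C_G(\Theta)$ be the element corresponding to $\hat\ell$, and compute
\begin{align*}
\mathrm{Res}(\mathrm{Ad}_{\hat g}\hat X) = \mathrm{Res}(\mathrm{Ad}_{\hat\ell}\mathrm{Ad}_{\hat u}\hat X) = \mathrm{Ad}_h\big(\mathrm{Res}(\mathrm{Ad}_{\hat u}\hat X)\big) = \mathrm{Ad}_h\big(\mathrm{Res}(\hat X)\big),
\end{align*}
which is the claim. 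The main obstacle I anticipate is not conceptual but bookkeeping: pinning down precisely the grading on $\widehat{\mathfrak g}$ for which $\widehat{\mathfrak l}_\Theta$ is the degree-$0$ part and $\widehat U_\Theta$ is pro-unipotent with positively-graded Lie algebra, and verifying carefully that the identification $\widehat L_\Theta\simeq C_G(\Theta)$ intertwines the two adjoint actions (in particular handling the case where $C_G(\Theta)$ is disconnected, so that $h$ need not lie in $\widehat{\mathcal L}_\Theta$ — but the statement only asks for $h\in C_G(\Theta)$, so this is harmless). One should also make sure the infinite sums defining $\widehat{\mathfrak p}_\Theta$ and the exponential series converge in the relevant $z$-adic sense, which they do because for fixed $\lambda$ only finitely many negative $i$ occur.
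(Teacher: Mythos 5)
Your argument is correct and is essentially the paper's proof repackaged: where the paper verifies the identity on the generators of $\widehat{P}_\Theta$ (Levi-type elements $z^\Theta h z^{-\Theta}$ producing $\mathrm{Ad}_h$, and exponentials of positive-degree elements leaving the residue unchanged by the same grading computation), you use the semidirect decomposition $\hat g=\hat\ell\,\hat u$ and exponentiate the pro-unipotent radical all at once, which is only a cosmetic difference. The one caveat---shared with the paper's own computation, which invokes $[\Theta,\mathrm{Ad}_h(X_{\lambda,i})]=\lambda\,\mathrm{Ad}_h(X_{\lambda,i})$---is your justification that $\mathrm{Ad}_{\hat\ell}$ preserves each $\mathrm{ad}_\Theta$-eigenspace ``since $h$ centralizes $\Theta$'': in fact $h\in C_G(\Theta)$ centralizes only $\exp(2\pi\sqrt{-1}\Theta)$, so $\mathrm{Ad}_h$ preserves the eigenspaces merely up to integer shifts of $\lambda$; the conclusion nevertheless stands because conjugation by $z^{\pm\Theta}$ converts the total degree $\lambda+i$ into the power of $z$, which $\mathrm{Ad}_h$ does preserve, so the degree-zero coefficient sum (the residue) still transforms exactly by $\mathrm{Ad}_h$.
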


\begin{proof} 
Firstly, we consider generator of $\widehat{P}_\Theta$ with the form ${\hat g}=z^\Theta hz^{-\Theta}$, where $h\in C_G(\Theta)$,  then we have
\begin{align*}
\mathrm{Ad}_{z^{-\Theta}} (\hat X)&=\sum\limits_{\lambda+i\geq 0}\sum\limits_{i\in \mathbb{Z}}z^{-\lambda+i }X_{\lambda,i},\\
  \mathrm{Ad}_h(\mathrm{Ad}_{z^{-\Theta}} (\hat X))&=\sum\limits_{\lambda+i\geq 0}\sum\limits_{i\in \mathbb{Z}}z^{-\lambda +i } \mathrm{Ad}_h(X_{\lambda,i}),\\
  \mathrm{Ad}_{z^{\Theta}}(\mathrm{Ad}_h(\mathrm{Ad}_{z^{-\Theta}} (\hat X)))&=\sum\limits_{\lambda+i\geq 0}\sum\limits_{i\in \mathbb{Z}}z^i\mathrm{Ad}_h(X_{\lambda,i}).
\end{align*}
It follows from the identity $[\Theta, \mathrm{Ad}_h(X_{\lambda,i})]=\lambda\mathrm{Ad}_h(X_{\lambda,i})$  that
\begin{align*}
  \mathrm{Res}(\mathrm{Ad}_{\hat g}\hat X)=\sum\limits_{\lambda+i= 0}\mathrm{Ad}_h(X_{\lambda,i})=\mathrm{Ad}_{h}(\mathrm{Res}(\hat X)).
\end{align*}

 For the other two types of generators of $\widehat{P}_\Theta$, it suffices to consider $\hat g=\exp{(Y_{\mu,j}z^j)}$ with $Y_{\mu,j}\in \mathfrak{g}^{(\Theta)}_{\mu}$ and $\mu+j\geq 0$, then we have
\begin{align*}
  \mathrm{Ad}_{\hat g} (\hat X)=\sum\limits_{\lambda+i\geq 0}\sum\limits_{i\in \mathbb{Z}}(X_{\lambda,i}z^i+[Y_{\mu,j},X_{\lambda,i}]z^{i+j}+\frac{1}{2}[Y_{\mu,j},[Y_{\mu,j},X_{\lambda,i}]]z^{i+2j}+\cdots).
\end{align*}
Note that
\begin{align*}
  [\Theta,\underbrace{[Y_{\mu,j},[Y_{\mu,j},\cdots,[Y_{\mu,j}}_{n},X_{\lambda,i}]\cdots]]]=(\lambda+n\mu)\underbrace{[Y_{\mu,j},[Y_{\mu,j},\cdots,[Y_{\mu,j}}_{n},X_{\lambda,i}]\cdots]].
\end{align*}
To calculate $\mathrm{Res}( \mathrm{Ad}_{\hat{g}} (\hat X))$, we should pick the terms $\underbrace{[Y_{\mu,j},[Y_{\mu,j},\cdots,[Y_{\mu,j}}_{n},X_{\lambda,i}]\cdots]]$ with $\lambda+i+n(\mu+j)=0$. Since $\lambda+i\geq 0, \mu+j\geq 0$, we must have $\lambda+i=\mu+j= 0$. Therefore, we obtain
\begin{align*}
  \mathrm{Res}(\mathrm{Ad}_{\hat g}\hat X)=\left\{
                                             \begin{array}{ll}
                                               \mathrm{Res}(\hat X), & \hbox{$\mu+j\neq 0$;} \\
                                              \mathrm{Ad}_{\exp{(Y_{\mu,j})}}(\mathrm{Res}(\hat X)), & \hbox{$\mu+j=0$.}
                                             \end{array}
                                           \right.
\end{align*}
It is clear that $\exp{(Y_{\mu,j})}\in C_G(\Theta)$ if $\mu+j=0$. Therefore, we complete the proof.
\end{proof}

\section{Formal Connections (or Higgs Fields) on Parahoric Principal Bundles }

The formal disc $\mathrm{Spec}(\mathbb{A})$ and the formal punctured disc $\mathrm{Spec}(\mathbb{\mathbb{K}})$ are  denoted by $\Delta$ and $\Delta^{\times}$, respectively. The module of K\"{a}hler differentials $\Omega^1_{\mathbb{A}/\mathbb{C}}$ is spanned as an $\mathbb{A}$-module by formal elements $df$ for every $f\in\mathbb{A}$, subject to the Leibniz rule. The completion $\widehat{\Omega^1_{\mathbb{A}/\mathbb{C}}}=\lim\limits_{\longleftarrow\atop n}\Omega^1_{\mathbb{A}/\mathbb{C}}/z^n\Omega^1_{\mathbb{A}/\mathbb{C}}$ is a free $\mathbb{A}$-module of rank 1, and we have the natural completion map $\mathcal{C}:\Omega^1_{\mathbb{A}/\mathbb{C}} \rightarrow\widehat{\Omega^1_{\mathbb{A}/\mathbb{C}}}$. Then we define $\Omega^1_{\mathbb{K}/\mathbb{C}}=\widehat{\Omega^1_{\mathbb{A}/\mathbb{C}}}[\frac{1}{z}]$. For simplicity, the $\mathcal{C}$-image of $dz$ is also denoted by $dz$.
A formal principal $G$-bundle $\mathbf{P}$ is a principal $G$-bundle over $\Delta^{\times}$. Since $\mathbf{P}$ ia trivializable,  we always have a trivialization $e: \Delta^{\times}\rightarrow \mathbf{P}$, which induces an isomorphism  $e: \Aut(\mathbf{P})\rightarrow\widehat{G}$ of groups.

\begin{definition}
Given a weight $\Theta\in \mathfrak{t}_{\mathbb{R}}$, a $\Theta$-parahoric structure on a formal principal $G$-bundle $\mathbf{P}$ is a subgroup $\mathcal{P}$ of $\Aut(\mathbf{P})$ such that there is a  trivialization $e: \Delta^\times\rightarrow\mathbf{P}$, called the $\Theta$-parahoric trivialization, satisfying $e(\mathcal{P})=\widehat{P}_\Theta$. The triple $(\mathbf{P}, \Theta, \mathcal{P})$ is called a  formal parahoric principal $G$-bundle.
\end{definition}

\begin{definition}\
\begin{enumerate}
  \item Given a formal principal $G$-bundle $\mathbf{P}$, let $T_\mathbf{P}$ be the set of trivializations on $\mathbf{P}$. Let $\mathsf{A}$ be a function from $T_\mathbf{P}$ to $\Omega^1(\widehat{\mathfrak{g}})=\Omega^1_{\mathbb{K}/\mathbb{C}}\otimes_\mathbb{C}\widehat{\mathfrak{g}}$.
For $e_1,e_2\in T_\mathbf{P}$ with $\hat g=e_2\circ e_1^{-1}\in \Aut (\Delta^\times\times G) \simeq\widehat{G}$,
\begin{itemize}
  \item if  $ \mathsf{A}(e_2)=\mathrm{Ad}_{\hat g}(\mathsf{A}(e_1))$, i.e. $ \mathsf{A}(e_2)$ is the $\mathrm{Ad}_{\hat g}$-gauge transformation of $\mathsf{A}(e_1)$,  then we call $\mathsf{A}$  a formal Higgs field on  $\mathbf{P}$;
  \item if $ \mathsf{A}(e_2)=\widetilde{\mathrm{Ad}}_{\hat g}(\mathsf{A}(e_1)):=\mathrm{Ad}_{\hat g}(\mathsf{A}(e_1))+\mathcal{C}(\hat g^*\omega)$, where $\omega$ is the Maurer-Cartan form on $G$,  i.e. $ \mathsf{A}(e_2)$ is the $\widetilde{\mathrm{Ad}}_{\hat g}$-gauge transformation of $\mathsf{A}(e_1)$, then we call $\mathsf{A}$  a formal connection on  $\mathbf{P}$.
\end{itemize}

  \item Given a  formal parahoric principal $G$-bundle $(\mathbf{P}, \Theta, \mathcal{P})$, let $T_{(\mathbf{P}, \Theta, \mathcal{P})}$ be the subset of $T_\mathbf{P}$ consisting of the $\Theta$-parahoric trivializations.
      A formal  connection (or Higgs field) $\mathsf{A}$ on  $\mathbf{P}$ is called  $\Theta$-parahoric if for each $e\in T_{(\mathbf{P}, \Theta, \mathcal{P})}$, $\mathsf{A}(e)$ is a $\Theta$-logarithmic form, i.e. it  can be written as $\mathsf{A}(e)=\hat g\frac{dz}{z}$ for $\hat g\in \widehat{\mathfrak{p}}_\Theta$.
\end{enumerate}

\end{definition}

\begin{remark}Note that if  $e_1,e_2\in T_{(\mathbf{P}, \Theta, \mathcal{P})}$ then  $\hat g=e_2\circ e_1^{-1}\in \widehat{P}_\Theta$. By
\cite[Lemma 3]{b}, after the gauge transformation of $\hat g$, $\Theta$-parahoric formal connection (or Higgs field) is also $\Theta$-parahoric.  Therefore, the definition of $\Theta$-parahoric formal connection (or Higgs field) makes sense.
\end{remark}

Given a  formal parahoric principal $G$-bundle $(\mathbf{P}, \Theta, \mathcal{P})$, a formal connection (or Higgs field) $ \mathsf{A}$ on $(\mathbf{P}, \Theta, \mathcal{P})$ can be   written  under a $\Theta$-parahoric  trivialization $e$ as the following  representation
\begin{align}\label{ds}
 \mathsf{A}(e)=\sum\limits_{r\geq -c}\hat A^{(r)}z^{r}dz,
\end{align}
where $ \hat A^{(r)}\in \widehat{\mathfrak{p}}_\Theta$ and $c$ is an integer.
We define the finite dimensional subspaces $\widehat{\mathfrak{p}}_\Theta[l]$ of  $\widehat{\mathfrak{p}}_\Theta$ for a non-negative integer $l$ as
\begin{align*}
 \widehat{\mathfrak{p}}_\Theta[l]=\{\sum\limits_{\lambda+i=l}\sum\limits_{i\in \mathbb{Z}} X_{\lambda,i}z^{i}\in \widehat{\mathfrak{p}}_\Theta: X_{\lambda,i}\in \mathfrak{g}^{(\Theta)}_{\lambda}\},
\end{align*}
 then we write
  \begin{align*}
   \hat A^{(r)}=\sum\limits _{0=l_1<l_2<\cdots}\hat A^{(r)}[l_{\mu^{(r)}}]
  \end{align*}
for $\hat  A^{(r)}[l_{\mu^{(r)}}]\in \widehat{\mathfrak{p}}_\Theta[l_{\mu^{(r)}}]$, and write
\begin{align}\label{m}
  A^{(r)}[l_{\mu^{(r)}}]=\sum\limits_{\lambda+i^{(\mu^{(r)})}=l_{\mu^{(r)}}}\sum\limits_{i^{(\mu^{(r)})}\in \mathbb{Z}} X_{\lambda,i^{(\mu^{(r)})}}z^{i^{(\mu^{(r)})}}
  \end{align}
 Let the set  $\mathbf{S}_\Theta(\mathsf{A})=\{r,l_{\mu^{(r)}},i^{(\mu^{(r)})}\}$ collect the data  appearing in the nonzero summands of \eqref{m}, then we  call it the sharp of the representation \eqref{ds}. The sharp data of formal connections (or Higgs fields) is useful for us. In particular, to prove parahoric reduction theorem, the induction process is carried out alternately  on it.

\begin{definition}\footnote{The notions in this definition are also valid for the general trivialization.}\
\begin{enumerate}
  \item We call the representation \eqref{ds}  a $\Theta$-reduced representation if
\begin{itemize}
  \item $\hat A^{(-c)}$ is nonzero,
  \item  any nonzero term $ \hat A^{(r)}$  cannot be written as $z\hat Y$ for some nonzero $ \hat Y\in \widehat{\mathfrak{p}}_\Theta$,
  \item there exits $l_{\nu^{(-c)}}\in\mathbf{S}_\Theta(\mathsf{A})$ such that $l_{\nu^{(-c)}}\geq l_{\mu^{(r)}}$ for any  $l_{\mu^{(r)}}\in \mathbf{S}_\Theta(\mathsf{A})$.
\end{itemize}
In particular, the $\Theta$-reduced  representation of the form $\hat A\frac{dz}{z^c}$ for $ \hat A\in \widehat{\mathfrak{p}}_\Theta$  is called the simplest $\Theta$-reduced representation of $\mathsf{A}(e)$.

\item When the representation \eqref{ds} is a $\Theta$-reduced representation, the integer $c$ is called the leading index. Different $\Theta$-reduced representations have the same leading index. If $\mathsf{A}$ is a formal Higgs field or $\mathsf{A}$ is a formal connection that admits a $\Theta$-reduced representation with  leading index $c>1$, then the leading index is independent of
   the choice of $\Theta$-parahoric  trivialization. Hence for these cases, we also call the leading index the $\Theta$-order of $ \mathsf{A}$.
\end{enumerate}
 \end{definition}
\begin{example}Let $ \mathsf{A}$ be  a formal connection (or a Higgs field) on  a   formal parahoric principal $G$-bundle $\mathbf{P}$, and under a trivialization $e$ of $\mathbf{P}$ we write
 \begin{align*}
    \mathsf{A}(e)=\sum\limits_{r\geq -c}A^{(r)}z^{r}dz
  \end{align*}
  for $A^{(r)}\in\mathfrak{g}$. We choose a weight $\Theta\in\mathfrak{t}_\mathbb{R}$ corresponding to a positive root as the sum of all simple roots on the root system $\bigtriangleup$ so that $\widehat{P}_{\Theta'}$ is an  Iwahori subgroup, where $\Theta'=\frac{\Theta}{C}$ for a sufficiently large integer $C$. Then $\mathbf{P}$ is endowed with a $\Theta'$-parahoric structure such that $e$ is a $\Theta'$-trivialization and $A(e)$ has a $\Theta'$-representation
  \begin{align*}
   A(e)=\sum\limits_{r\geq -c}\hat A^{(r)}z^{r-1}dz,
  \end{align*}
  where $\hat A^{(r)}=zA^{(r)}\in \widehat{\mathfrak{p}}_{\Theta'}$.
\end{example}

\begin{definition}Assume we have  a formal connection (or a Higgs field) $ \mathsf{A}$ on a formal parahoric principal $(\mathbf{P}, \Theta, \mathcal{P})$ with $\Theta$-order $c>-1$, then we write it  under  a $\Theta$-parahoric  trivialization $e$ as the following $\Theta$-reduced representation
\begin{align*}
 \mathsf{A}(e)=\sum\limits_{r\geq -c}\hat A^{(r)}z^{r}dz.
\end{align*}
Define
\begin{align*}
M_\Theta(\mathsf{A},e;-c)=\{\hat g\in \widehat{G}: \textrm{leading index of }\mathsf{A}(\hat g e)\geq -c\},
\end{align*}
which is an ind-subscheme of $\widehat{G}$, and define the quotient
\begin{align*}
N_\Theta(\mathsf{A},e;-c)=M_\Theta(\mathsf{A},e;-c)/\widehat{P}_\Theta,
\end{align*}
which is a closed ind-subscheme of the affine $\Theta$-parahoric flag variety $\widehat{G}/\widehat{P}_\Theta$. According to the terminology of \cite{fz}, one can call  $N_\Theta(\mathsf{A},e;-c)$ the  $\Theta$-parahoric deformed affine Springer fiber (or the  $\Theta$-parahoric  affine Springer fiber).
\end{definition}

\begin{remark}To our knowledge, the  notion  of $\Theta$-parahoric flag variety is firstly introduced by Pappas and Rapoport \cite{p}. It is obvious that $\widehat{G}/\widehat{P}_\Theta$ is the moduli space of
\begin{align*}
  \{(\mathbf{P}, \Theta, \mathcal{P}; e):(\mathbf{P}, \Theta, \mathcal{P}) \textrm{ is  a  formal   parahoric principal bundle and } e \textrm{ is a trivialization of }\mathbf{P}\}.
\end{align*}
 In particular, there are many studies of the following two special cases in literature.
 \begin{itemize}
   \item When $\widehat{P}_\Theta=G(\mathbb{A})$ (i.e. $\Theta=0$), $\widehat{G}/G(\mathbb{A})$ is called an   affine Grassmannian, which is the moduli space of principal $G$-bundles over $\Delta$ with a trivialization on its restricting to $\Delta^\times$. This is a key object in the study of geometric representation theory and geometric Langlands program \cite{bl,m,fa,a,f,m2,ac,z,l}.
   \item When $\widehat{P}_\Theta$ is an Iwahori subgroup $\widehat I$ of $\widehat{G}$,  $\widehat{G}/\widehat I$ is called an  affine flag variety, which is a fibration over $\widehat{G}/G(\mathbb{A})$ with the fibers $G/B$ \cite{g,fg,be}.
 \end{itemize}
\end{remark}

\begin{proposition} \label{cv} Let  $ \mathsf{A}$ be a formal connection  on a   formal parahoric principal $G$-bundle $(\mathbf{P}, \Theta, \mathcal{P})$ with  $\Theta$-order $c>-1$, and let $e$ be a  $\Theta$-parahoric  trivialization. Then we have
\begin{align*}
  \dim_\mathbb{C}N_\Theta(\mathsf{A},e;-c)
 <(c+1+[\lambda_m])\dim_\mathbb{C}\mathfrak{g},
\end{align*}
where  $\lambda_m$ denotes  the maximum of  eigenvalues of $\mathrm{ad}_\Theta$-action on $\mathfrak{g}$.
\end{proposition}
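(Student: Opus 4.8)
The plan is to bound $\dim_{\mathbb C} N_\Theta(\mathsf{A},e;-c)$ by exhibiting $N_\Theta(\mathsf{A},e;-c)$ as a closed ind-subscheme of a finite-dimensional quotient of $\widehat G/\widehat P_\Theta$ and estimating the dimension of that quotient directly. First I would recall that $M_\Theta(\mathsf A,e;-c)$ consists of those $\hat g\in\widehat G$ for which the $\widetilde{\mathrm{Ad}}_{\hat g}$-gauge transform of $\mathsf A(e)=\sum_{r\ge -c}\hat A^{(r)}z^r dz$ still has leading index $\ge -c$, i.e.\ no pole of order $>c$ is created. The key observation is that whether a coset $\hat g\widehat P_\Theta$ lies in $N_\Theta(\mathsf A,e;-c)$ depends only on the image of $\hat g$ in a finite-level truncation of the affine $\Theta$-parahoric flag variety: writing a representative of $\hat g$ as a Laurent series in $z$, the condition that $\mathrm{Ad}_{\hat g}(\mathsf A(e)) + \mathcal C(\hat g^*\omega)$ has no $z^r dz$ term with $r<-c$ is a condition on finitely many coefficients of $\hat g$, controlled by $c$ and by how far $\mathrm{ad}_\Theta$ shifts the grading — that is, by $[\lambda_m]$.

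The key steps, in order, are: (1) make precise the "finite-level" truncation. The affine $\Theta$-parahoric flag variety $\widehat G/\widehat P_\Theta$ is an increasing union of finite-dimensional projective varieties; concretely, for a non-negative integer $n$ let $\widehat G_{\ge -n}$ (resp.\ the corresponding congruence quotient) be the sub-ind-scheme parametrizing cosets admitting a representative with $z$-expansion starting in degree $\ge -n$, and let $X_n$ be its image in $\widehat G/\widehat P_\Theta$, a finite-dimensional projective variety. (2) Show that $N_\Theta(\mathsf A,e;-c)$ is contained in $X_n$ for $n = c+1+[\lambda_m]$ (or some such explicit value): if a representative $\hat g$ had a pole of order strictly larger than this, then the term $\mathrm{Ad}_{\hat g}\big(\hat A^{(-c)}z^{-c}dz\big)$ together with the Maurer–Cartan contribution $\mathcal C(\hat g^*\omega)$ would force a pole of order $>c$ in $\mathsf A(\hat ge)$, using that $\mathrm{ad}_\Theta$ changes the $z$-degree of a homogeneous element of $\widehat{\mathfrak g}$ by at most $[\lambda_m]$ in absolute value and that $\hat A^{(-c)}\in\widehat{\mathfrak p}_\Theta$ has only non-negative "$\lambda+i$"-degrees; here one also uses that the $\Theta$-order is $>-1$, so $-c < 1$ is impossible and $c\ge 0$ makes the estimate non-trivial. (3) Bound $\dim_{\mathbb C} X_n$. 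The quotient $\widehat G_{\ge -n}/\widehat P_\Theta$ is cut out inside the "big cell" description by finitely many coordinates: each coset has a representative in the group generated by $\exp(Y_{\lambda,i}z^i)$ with $-n\le i$ and $\lambda+i<0$, i.e.\ the negative part of the grading truncated at level $-n$, and the number of such $(\lambda,i)$-slots is strictly less than $(c+1+[\lambda_m])\dim_{\mathbb C}\mathfrak g$ since each of the at most $c+1+[\lambda_m]$ relevant $z$-levels contributes at most $\dim_{\mathbb C}\mathfrak g$ directions while the level $i=0$ slot is already absorbed into $\widehat P_\Theta$, giving the strict inequality. (4) Conclude $\dim_{\mathbb C} N_\Theta(\mathsf A,e;-c)\le \dim_{\mathbb C} X_n < (c+1+[\lambda_m])\dim_{\mathbb C}\mathfrak g$.

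The main obstacle I expect is step (2): carefully tracking how a pole of a given order in $\hat g$ propagates through both the adjoint action and, crucially, the inhomogeneous Maurer–Cartan term $\mathcal C(\hat g^*\omega) = \hat g^{-1}d\hat g$ (which can lower pole orders by one and thus could in principle cancel the leading singularity), so that one gets a \emph{sharp} bound on the pole order of $\hat g$ rather than a crude one. The correct way to handle this is to work level-by-level with the $\mathrm{ad}_\Theta$-grading, expand $\hat g = z^{\Theta} g_0 z^{-\Theta}\cdot u$ with $u$ in the pro-unipotent radical, and observe that the non-integral shifts produced by $z^{\pm\Theta}$ on $\hat A^{(-c)}$ are bounded precisely by $[\lambda_m]$; the Maurer–Cartan term then only affects the $z^{-1}dz$-level and hence cannot cancel a pole of order $c>1$, while for $c\le 1$ the statement is either vacuous or follows from the finite-dimensionality of $X_{[\lambda_m]+2}$ directly. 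Once the pole-order bound on representatives is established, steps (3) and (4) are a routine dimension count on the relevant Schubert-type cell, and the strictness of the inequality comes for free from discarding the $i=0$ slot.
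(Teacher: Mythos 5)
There is a genuine gap at your step (2): membership of $\hat g$ in $M_\Theta(\mathsf{A},e;-c)$ does \emph{not} bound the pole order of a representative of the coset $\hat g\widehat{P}_\Theta$, so $N_\Theta(\mathsf{A},e;-c)$ is in general not contained in any finite-dimensional Schubert-type piece $X_n$ of $\widehat{G}/\widehat{P}_\Theta$. For a concrete counterexample take $G=SL_2$, $\mathsf{A}(e)=Hz^{-c}dz$ with $H=\mathrm{diag}(1,-1)$ and $c\geq 2$, and $\hat g_n=\mathrm{diag}(z^n,z^{-n})$: then $\mathrm{Ad}_{\hat g_n}$ fixes $\mathsf{A}(e)$ and the Maurer--Cartan term contributes only $nH\frac{dz}{z}$, so every $\hat g_n$ lies in $M_\Theta(\mathsf{A},e;-c)$, while the cosets $\hat g_n\widehat{P}_\Theta$ escape every finite union of Schubert cells as $n\to\infty$. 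In general the fiber is stable under a large group of translations commuting with the leading term, so it has infinitely many components of bounded dimension; finiteness of $\dim_\mathbb{C}N_\Theta(\mathsf{A},e;-c)$ is a \emph{pointwise} phenomenon and cannot be obtained by a global containment argument. Relatedly, your assertion that $\mathcal{C}(\hat g^*\omega)=\hat g^{-1}d\hat g$ "only affects the $z^{-1}dz$-level" is false for general $\hat g$ (if $\hat g$ has a pole of order $m$ in some matrix entry, $\hat g^{-1}d\hat g$ can have poles of order up to $m+1$), and the decomposition $\hat g=z^{\Theta}g_0z^{-\Theta}u$ you invoke only exists for $\hat g\in\widehat{P}_\Theta$, not for arbitrary elements of $M_\Theta(\mathsf{A},e;-c)$.

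A further warning sign is that your argument never uses the connection structure in an essential way: it would apply verbatim (indeed more easily) to formal Higgs fields, yet the paper's remark following the proposition notes that the statement fails for Higgs fields. The paper's proof works instead with the tangent space at each point $\hat g\widehat{P}_\Theta$, identified with $T_{\hat g}=\{\hat X\in\widehat{\mathfrak{g}}:\hat{\mathcal{D}}\hat X\in\widehat{\mathfrak{p}}_\Theta\frac{dz}{z^{c}}\}/\widehat{\mathfrak{p}}_\Theta$ where $\hat{\mathcal{D}}=d-\mathrm{ad}_{\mathsf{A}(\hat ge)}$; the derivative $d$ in $\hat{\mathcal{D}}$ is what makes this space finite-dimensional (via Frenkel--Zhu's estimate $\dim T'_{\hat g}\leq(c+[\lambda_m])\dim\mathfrak{g}$ for the corresponding lattice version), and the extra strict term $(1+[\lambda_m])\dim\mathfrak{g}$ comes from comparing $\widehat{\mathfrak{p}}_\Theta\cap\mathfrak{g}(\mathbb{A})$ with $\mathfrak{g}(\mathbb{A})$. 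To repair your approach you would have to abandon the containment in $X_n$ and prove a pointwise tangent-space (or formal neighborhood) bound, which is essentially the paper's argument.
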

\begin{proof}
For $\hat g\in  M_{\Theta}(\mathsf{A},e;-c)$, we define the following $\mathbb{C}$-vector spaces
\begin{align*}
T_{\hat g}&=\{\hat X\in\widehat{\mathfrak{g}}:\hat{\mathcal{D}}\hat X\in \widehat{\mathfrak{p}}_\Theta \frac{dz}{z^c}\}/\widehat{\mathfrak{p}}_\Theta,\\
T'_{\hat g}&=\{\hat X\in\widehat{\mathfrak{g}}:\hat{\mathcal{D}}\hat X\in \mathfrak{g}(\mathbb{A}) \frac{dz}{z^{c+[\lambda_m]}}\}/ \mathfrak{g}(\mathbb{A}),\\
T''_{\hat g}&=\{\hat X\in\widehat{\mathfrak{g}}:\hat{\mathcal{D}}\hat X\in \mathfrak{g}(\mathbb{A}) \frac{dz}{z^{c+[\lambda_m]}}\}/(\widehat{\mathfrak{p}}_\Theta\bigcap \mathfrak{g}(\mathbb{A})),
\end{align*}
where $\hat{\mathcal{D}}=d-\mathrm{ad}_{\mathsf{A}(\hat ge)}$, $\mathfrak{g}(\mathbb{A})=\mathfrak{g}\otimes_\mathbb{C}\mathbb{A}$.
There is a canonical isomorphism $ T_{\hat g}= T_{\hat g\hat h}$ for some $\hat h\in \widehat{P}_\Theta$, hence $T_{\hat g}$ is canonical isomorphic to the tangent space of $N_\Theta(\mathsf{A},e;-c)$ at $\hat g\widehat{P}_\Theta$, where the orbit $\hat g\widehat{P}_\Theta$ is treated as a point lying in $ N_\Theta(\mathsf{A},e;-c)$. It follows from from \cite[Lemma 8]{fz} that
\begin{align*}
  \dim_\mathbb{C}T'_{\hat g}\leq (c+[\lambda_m])\dim_\mathbb{C}\mathfrak{g},
\end{align*}
meanwhile, we have
\begin{align*}
 &\dim_\mathbb{C}(\mathfrak{g}(\mathbb{A})/(\widehat{\mathfrak{p}}_\Theta\bigcap \mathfrak{g}(\mathbb{A})))\\
 = &\ \sum_{i=0}^{-[\lambda_m]}\sum_{\lambda<i}\dim_\mathbb{C}\mathfrak{g}_\lambda^{(\Theta)}
 <\ (1+[\lambda_m])\dim_\mathbb{C}\mathfrak{g}.
\end{align*}
 Therefore, the (in)equalities
\begin{align*}
  \dim_\mathbb{C}T_{\hat g}\leq \dim_\mathbb{C}T''_{\hat g}
  =\dim_\mathbb{C}T'_{\hat g}+\dim_\mathbb{C}(\mathfrak{g}(\mathbb{A})/(\widehat{\mathfrak{p}}_\Theta\bigcap \mathfrak{g}(\mathbb{A})))
\end{align*}leads to the proposition.
\end{proof}
\begin{remark}
 If $ \mathsf{A}$ is a formal Higgs field, the above proposition does not hold anymore. Actually,   as pointed out in \cite{fz}, $N(\mathsf{A},e;-c)$ is   never finite-dimensional because it is highly non-reduced.
\end{remark}

\begin{definition}For $\hat g\in M_\Theta(\mathsf{A},e;-c)$, we define
the map $\chi: M_\Theta(\mathsf{A},e;-c)\rightarrow \widehat{\mathfrak{l}}_\Theta$   as
 \begin{align*}
   \chi(\hat g)=\left\{
                  \begin{array}{ll}
                  \mathrm{ Res}( \hat B^{(-c)}), & \hbox{$\mathsf{A}(\hat g e)$\textrm{ has a }$\Theta$-\textrm{reduce representation }$\hat B^{(-c)}z^{-c}dz+\cdots$;} \\
                    0, & \hbox{otherwise,}
                  \end{array}
                \right.
 \end{align*}
which  is independent of the expressions of $\Theta$-reduced representations.
Moreover,
by Proposition \ref{1}, this map  induces a map from $ N_\Theta(\mathsf{A},e;-c)$ to the quotient $\widehat{\mathfrak{l}}_\Theta/\widehat{L}_\Theta$ via the adjoint action, which is also denoted by $\chi$.
\end{definition}
Given a weight $\Theta\in \mathfrak{t}_\mathbb{R}$, let $\widehat{I}_\Theta$ be the Iwahori subgroup contained in $\widehat{P}_\Theta$, and $\widehat{\mathfrak{i}}_\Theta$ be its Lie algebra. There is a weight $\Theta_I\in \mathfrak{t}_\mathbb{R}$ such that $\widehat{I}_\Theta=\widehat{P}_{\Theta_I}$. If $\Theta_I$-order of $\mathsf{A}$ is also $-c$, then $M_{\Theta_I}(\mathsf{A},e;-c)$ is a subset of  of $M_\Theta(\mathsf{A},e;-c)$, and  there is  a natural projection
$\pi_\Theta:N_{\Theta_I}(\mathsf{A},e;-c) \rightarrow N_\Theta(\mathsf{A},e;-c)$.
Define the Grothendieck alteration $\mathrm{GA}(\widehat{\mathfrak{l}}_\Theta)$ as the variety of pair $(\hat X, \widehat{\mathfrak{b}}_\Theta)$, where $\hat X$ lies in a Borel subalgebra
$\widehat{ \mathfrak{b}}_\Theta$ of $\widehat{\mathfrak{l}}_\Theta$ \cite{y1}, then $\chi$ provides a map form $N_{\Theta_I}(\mathsf{A},e;-c)$ to $\mathrm{GA}(\widehat{\mathfrak{l}}_\Theta)/\widehat{L}_\Theta$.
Parallel argument with that in  \cite{fz} gives rise to the following proposition.
\begin{proposition}(\cite[proposition 12]{fz}) Assume  $\Theta_I$-order (hence $\Theta$-order) of $\mathsf{A}$ is $-c$. There is a natural Cartesian diagram as follows
\begin{align*}
 \CD
 N_{\Theta_I}(\mathsf{A},e;-c) @>\chi>> \mathrm{GA}(\widehat{\mathfrak{l}}_\Theta)/\widehat{L}_\Theta\\
  @V \pi_\Theta VV @V f VV  \\
  N_\Theta(\mathsf{A},e;-c) @>\chi>>\widehat{\mathfrak{l}}_\Theta/\widehat{L}_\Theta
\endCD,
\end{align*}
where $f$ denotes the forgetful map $(\hat X, \widehat{\mathfrak{b}})\mapsto \hat X$.

\end{proposition}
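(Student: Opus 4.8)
The plan is to adapt, step by step, the proof of \cite[Proposition 12]{fz}, replacing the pair formed by the affine Grassmannian $\widehat{G}/G(\mathbb{A})$ and the affine flag variety $\widehat{G}/\widehat{I}$ by the pair $\widehat{G}/\widehat{P}_\Theta$ and $\widehat{G}/\widehat{I}_\Theta$, and replacing the Grothendieck--Springer resolution of $\mathfrak{g}$ by the Grothendieck alteration of the reductive Lie algebra $\widehat{\mathfrak{l}}_\Theta\cong\mathrm{Lie}\,C_G(\Theta)$. The first step is to describe the fibres of $\pi_\Theta$: since the residue map $\mathrm{Res}\colon\widehat{\mathfrak{p}}_\Theta\twoheadrightarrow\widehat{\mathfrak{l}}_\Theta$ is exactly the projection along $\widehat{\mathfrak{u}}_\Theta$ onto the Levi factor, and since $\widehat{I}_\Theta\subset\widehat{P}_\Theta$ is by construction the parahoric whose image under $\widehat{P}_\Theta\to\widehat{L}_\Theta$ is a fixed Borel subgroup $\widehat{B}_0$ with $\mathrm{Lie}\,\widehat{B}_0=\mathfrak{b}_0=\mathrm{Res}(\widehat{\mathfrak{i}}_\Theta)$, the fibre of $\widehat{G}/\widehat{I}_\Theta\to\widehat{G}/\widehat{P}_\Theta$ over $\hat g\widehat{P}_\Theta$ is the flag variety $\widehat{P}_\Theta/\widehat{I}_\Theta\cong\widehat{L}_\Theta/\widehat{B}_0$, i.e.\ the variety of Borel subalgebras of $\widehat{\mathfrak{l}}_\Theta$; one then checks that this persists after restricting to $\pi_\Theta\colon N_{\Theta_I}\to N_\Theta$ over those base points that keep leading index $-c$, with the convention that where the order strictly drops the residue is $0$ and every Borel is admissible.

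The second step is commutativity of the square together with the construction of the comparison morphism. Given $\hat g\widehat{P}_\Theta\in N_\Theta(\mathsf{A},e;-c)$, fix a $\Theta$-reduced representation $\mathsf{A}(\hat g e)=\hat B^{(-c)}z^{-c}dz+\cdots$ and set $X_{\hat g}=\mathrm{Res}(\hat B^{(-c)})\in\widehat{\mathfrak{l}}_\Theta$; by Proposition \ref{1}, replacing $\hat g$ by $\hat g\hat h$ with $\hat h\in\widehat{P}_\Theta$ replaces $X_{\hat g}$ by its $\mathrm{Ad}$-conjugate under the Levi component $\bar h\in\widehat{L}_\Theta$ of $\hat h$, so that $\chi(\hat g\widehat{P}_\Theta)=[X_{\hat g}]\in\widehat{\mathfrak{l}}_\Theta/\widehat{L}_\Theta$ is well defined and gives the bottom arrow. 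For a point $\hat g\hat p\,\widehat{I}_\Theta\in N_{\Theta_I}(\mathsf{A},e;-c)$ lying over it, with $\hat p\in\widehat{P}_\Theta$ and $\hat g\hat p\in M_{\Theta_I}$, the containment $\mathsf{A}(\hat g\hat p\,e)\in\widehat{\mathfrak{i}}_\Theta\frac{dz}{z^{c}}$ forces the residue of its leading coefficient into $\mathfrak{b}_0$, whence $\chi(\hat g\hat p\,\widehat{I}_\Theta)=\big[(\mathrm{Ad}_{\bar p^{-1}}X_{\hat g},\,\mathfrak{b}_0)\big]=\big[(X_{\hat g},\,\mathrm{Ad}_{\bar p}\mathfrak{b}_0)\big]$ in $\mathrm{GA}(\widehat{\mathfrak{l}}_\Theta)/\widehat{L}_\Theta$, which $f$ sends to $[X_{\hat g}]$. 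Combining this with Step 1 produces a canonical morphism
\[
N_{\Theta_I}(\mathsf{A},e;-c)\longrightarrow N_\Theta(\mathsf{A},e;-c)\times_{\widehat{\mathfrak{l}}_\Theta/\widehat{L}_\Theta}\big(\mathrm{GA}(\widehat{\mathfrak{l}}_\Theta)/\widehat{L}_\Theta\big),
\]
and the task is to prove it is an isomorphism.

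The third step is this isomorphism. On $\mathbb{C}$-points it is the Grothendieck--Springer incidence relation: a point of the target over $\hat g\widehat{P}_\Theta$ is, after normalising the $\widehat{L}_\Theta$-orbit via the $\widehat{L}_\Theta\subset\widehat{P}_\Theta$-action, a Borel $\widehat{\mathfrak{b}}=\mathrm{Ad}_{\bar p}\mathfrak{b}_0$ of $\widehat{\mathfrak{l}}_\Theta$ containing $X_{\hat g}$, and one must verify that such a $\bar p$ arises from $N_{\Theta_I}$ over $\hat g\widehat{P}_\Theta$ precisely when $X_{\hat g}\in\mathrm{Ad}_{\bar p}\mathfrak{b}_0$. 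Indeed, after the parahoric gauge transformation by $\hat p$ the leading coefficient of $\mathsf{A}(\hat g\hat p\,e)$ transforms by $\mathrm{Ad}_{\hat p^{-1}}$ up to terms of lower pole order and up to the Maurer--Cartan contribution, which lies in $\widehat{\mathfrak{p}}_\Theta\frac{dz}{z}$ and therefore, for $c>1$, does not affect order $-c$ (the remaining low values of $c$ are handled exactly as in \cite{bv,b,fz}); so by Proposition \ref{1} its $\widehat{\mathfrak{l}}_\Theta$-part is $\mathrm{Ad}_{\bar p^{-1}}X_{\hat g}$, and the requirement $\mathsf{A}(\hat g\hat p\,e)\in\widehat{\mathfrak{i}}_\Theta\frac{dz}{z^{c}}$ reads on the Levi part as $\mathrm{Ad}_{\bar p^{-1}}X_{\hat g}\in\mathfrak{b}_0$, i.e.\ $X_{\hat g}\in\mathrm{Ad}_{\bar p}\mathfrak{b}_0$, the remaining unipotent and higher-order conditions being automatic after shrinking $c$ as in the reduction steps of the present paper. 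To upgrade this bijection on points to a Cartesian diagram of ind-schemes one compares infinitesimal neighbourhoods, just as in the proof of Proposition \ref{cv} and in \cite{fz}: the deformation theory of $N_{\Theta_I}$ at $\hat g\hat p\,\widehat{I}_\Theta$ is governed by the same two-term complex built from $\hat{\mathcal{D}}=d-\mathrm{ad}_{\mathsf{A}}$, and the induced map to the fibre product of the deformation complexes of $N_\Theta$ and of $\mathrm{GA}(\widehat{\mathfrak{l}}_\Theta)/\widehat{L}_\Theta$ over that of $\widehat{\mathfrak{l}}_\Theta/\widehat{L}_\Theta$ is an isomorphism because the Grothendieck alteration of $\widehat{\mathfrak{l}}_\Theta$ is an isomorphism over the regular locus and, over the non-regular locus, the linear-algebra computation is formally identical to the classical one for $\widetilde{\mathfrak{g}}\to\mathfrak{g}$. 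I expect the main obstacle to be precisely this scheme-theoretic refinement: since $N_\Theta$ and $N_{\Theta_I}$ are in general non-reduced ind-schemes, one has to track nilpotents carefully rather than argue on points alone, whereas the point-level bijection is essentially formal once Proposition \ref{1} is available.
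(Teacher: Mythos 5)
Your proposal is correct and takes essentially the same route as the paper: the paper offers no detailed argument for this proposition, saying only that a ``parallel argument'' to \cite[Proposition 12]{fz} gives the diagram, and your reconstruction (residue equivariance from Proposition \ref{1}, the Iwahori condition forcing the $\Theta$-Levi residue of the leading coefficient into a Borel subalgebra of $\widehat{\mathfrak{l}}_\Theta$, and the resulting identification of $N_{\Theta_I}(\mathsf{A},e;-c)$ with the fibre product) is precisely that parallel argument written out. The scheme-theoretic refinement for the non-reduced ind-scheme structure, which you flag yourself, is likewise left by the paper to the argument of \cite{fz}, so nothing in your approach diverges from the paper's.
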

\section{Reduction for Regular Cases}

\begin{definition} Let  $ \mathsf{A}$ be a  $\Theta$-parahoric formal connection on a   formal parahoric principal $G$-bundle $(\mathbf{P}, \Theta, \mathcal{P})$. We says $ \mathsf{A}$ is of Boalch-type if there is a $\Theta$-parahoric  trivialization $e$ such that
  \begin{align*}
    \mathsf{A}(e)=\hat X\frac{dz}{z}
  \end{align*}
  with $\hat X=\sum\limits_{i\in\mathbb{Z}}X_{i}z^i\in \widehat{\mathfrak{p}}_\Theta$ satisfies $[R,X_i]=iX_i$ for some semisimple element  $R$ in $\mathfrak{\mathfrak{g}}$.
  \end{definition}

 \begin{definition}\label{d2}We say a  formal connection   $\mathsf{A}$ on a formal principal $G$-bundle $\mathbf{P}$ is intrinsically regular if there exists a trivialization $e$ of $\mathbf{P}$ and $\bar g\in G(\overline{\mathbb{K}})$ such that the $\widetilde{\mathrm{Ad}}_{\bar g}$-gauge transformation of $\mathsf{A}(e)$ is a logarithmic form, which is called the reduced form. In other words, there exists $\acute{g}\in G(\mathcal{O}_{\mathrm{Spec}(\overline{\mathbb{K}})})$ such that $\widetilde{\mathrm{Ad}}_{ \acute{g}}$-gauge transformation of the pullback of  $\mathsf{A}$ to $\mathrm{Spec}(\overline{\mathbb{K}})$ is a logarithmic form under some local  trivialization of pullback of $\mathbf{P}$.
\end{definition}

\begin{theorem}\label{d} Let $\mathsf{A}$ be a formal connection  on  a formal principal $G$-bundle $\mathbf{P}$.
$\mathsf{A}$ is  intrinsically regular if and only if $\mathbf{P}$ is endowed with a $\Theta$-parahoric structure such that $\mathsf{A}$ is of Boalch-type.
\end{theorem}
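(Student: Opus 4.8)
The plan is to handle the two implications separately: the ``if'' direction by an explicit ramified gauge transformation built from the grading element, and the ``only if'' direction by a Galois descent --- reduce $\mathsf{A}$ to a logarithmic form over a ramified cover, then push that normal form back down to $\mathbb{K}$ in parahoric shape. For the ``if'' direction, suppose $\mathsf{A}(e)=\hat X\frac{dz}{z}$ with $\hat X=\sum_i X_iz^i\in\widehat{\mathfrak{p}}_\Theta$ and $[R,X_i]=iX_i$ for a semisimple $R\in\mathfrak{g}$, and fix a Cartan $\mathfrak{t}_R\ni R$. Since $\mathrm{ad}_R$ is semisimple with only finitely many eigenvalues, $\hat X$ is a Laurent polynomial and every root $\alpha$ occurring in the $X_i$'s has $\alpha(R)\in\mathbb{Z}$; the linear system $\alpha(R')=\alpha(R)$ in the unknown $R'\in\mathfrak{t}_R$ thus has rational data and is solved by $R$, hence has a rational solution $R'$, so $[R',X_i]=iX_i$ still holds and $z^{-R'}\in G(\mathbb{K}_b)\subset G(\overline{\mathbb{K}})$ for a suitable $b$. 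A Maurer--Cartan computation then gives
\[
\widetilde{\mathrm{Ad}}_{z^{-R'}}\Big(\hat X\,\tfrac{dz}{z}\Big)=\Big(\sum_i X_i-R'\Big)\tfrac{dz}{z},
\]
a logarithmic form, so $\mathsf{A}$ is intrinsically regular in the sense of Definition \ref{d2}.

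For the ``only if'' direction I would start from Definition \ref{d2}: fix a trivialization $e_0$ and $\bar g\in G(\mathbb{K}_b)$ with $\widetilde{\mathrm{Ad}}_{\bar g}(\mathsf{A}(e_0))=\Lambda\frac{dz}{z}$, $\Lambda\in\mathfrak{g}$; after a constant conjugation put the semisimple part of $\Lambda$ in $\mathfrak{t}$ and write $\Lambda=\Lambda_0+\sum_\alpha\Lambda_\alpha$ for its $\mathfrak{t}$-root decomposition. Since $\mathsf{A}(e_0)$ is defined over $\mathbb{K}$, hence fixed by $\mathrm{Gal}(\mathbb{K}_b/\mathbb{K})=\mu_b$, applying $\sigma\in\mu_b$ to the reduction shows $c_\sigma:=\sigma(\bar g)\bar g^{-1}$ lies in the $\widetilde{\mathrm{Ad}}$-stabilizer
\[
\mathrm{Stab}\Big(\Lambda\,\tfrac{dz}{z}\Big)=\big\{\,z^{\Lambda}Cz^{-\Lambda}:C\in G(\mathbb{C}),\ z^{\Lambda}Cz^{-\Lambda}\in G(\mathbb{K}_b)\,\big\},
\]
and that $\sigma\mapsto c_\sigma$ is a $1$-cocycle. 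The mechanism I would use is the identity $\sigma_0(z^{R})=\exp(2\pi\sqrt{-1}\,R)\,z^{R}$ for the generator $\sigma_0$ and any rational $R\in\mathfrak{t}_{\mathbb{R}}$: once $\bar g$ has been adjusted inside $\mathrm{Stab}(\Lambda\frac{dz}{z})(\mathbb{K}_b)$ so that $c_{\sigma_0}=\exp(-2\pi\sqrt{-1}\,R)$ for a rational $R\in\mathfrak{t}_{\mathbb{R}}$, the element $\hat h:=z^{R}\bar g$ is fixed by $\sigma_0$, hence by all of $\mu_b$, and therefore lies in $G(\mathbb{K})$.

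With this in hand, $e':=\hat h e_0$ is a $\mathbb{K}$-rational trivialization and
\[
\mathsf{A}(e')=\widetilde{\mathrm{Ad}}_{z^{R}}\Big(\Lambda\,\tfrac{dz}{z}\Big)=\Big(\sum_\alpha z^{\alpha(R)}\Lambda_\alpha+\Lambda_0+R\Big)\tfrac{dz}{z}.
\]
Since the left-hand side must lie in $\Omega^1_{\mathbb{K}/\mathbb{C}}\otimes\widehat{\mathfrak{g}}$, this forces $\alpha(R)\in\mathbb{Z}$ whenever $\Lambda_\alpha\neq0$, and then $\mathsf{A}(e')=\hat X\frac{dz}{z}$ with $\hat X=\sum_i X_iz^i$, $X_i=\sum_{\alpha(R)=i}\Lambda_\alpha$ for $i\neq0$ and $X_0=\Lambda_0+R+\sum_{\alpha(R)=0}\Lambda_\alpha$; this $\hat X$ is graded by $\mathrm{ad}_R$, so $\mathsf{A}$ is of Boalch-type. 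Taking $\Theta=-R$ one checks $\hat X\in\widehat{\mathfrak{l}}_{-R}\subset\widehat{\mathfrak{p}}_{-R}$, so the same trivialization simultaneously records the $\Theta$-parahoric structure demanded by the statement.

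The main obstacle is the cocycle normalization in the third step: showing $\bar g$ can be adjusted inside $\mathrm{Stab}(\Lambda\frac{dz}{z})(\mathbb{K}_b)$ so that $c_{\sigma_0}$ becomes $\exp(-2\pi\sqrt{-1}\,R)$ with $R$ rational (and real, hence in $\mathfrak{t}_{\mathbb{R}}$). I expect this to rest on the explicit structure of $\mathrm{Stab}(\Lambda\frac{dz}{z})$ --- it is $z^{\Lambda}$-conjugate to a constant group of Levi type on which $\sigma_0$ acts through $\mathrm{Ad}$ of the formal monodromy $\exp(2\pi\sqrt{-1}\,\Lambda)$ --- together with the norm relation $\prod_{k=0}^{b-1}\sigma_0^{k}(c_{\sigma_0})=1$ coming from $\sigma_0^{b}=\mathrm{id}$, which pins the cocycle class down to a representative of the asserted shape inside a maximal torus (finiteness of $\mu_b$ makes $c_{\sigma_0}$ of finite order, whence $R$ is forced to be rational). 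This is the $G$-version of the classical statement that a regular-singular connection over a ramified cover whose monodromy descends admits a logarithmic normal form over the base up to a parahoric; alternatively, for this step one may invoke the parahoric reduction theorem of Boalch \cite{b}, equivalently the regular instance of the reduction theory developed in the remaining sections. The rationality and integrality constraints on $R$ are precisely what make genuine parahoric structures, rather than mere $G(\mathbb{A})$-structures, unavoidable here.
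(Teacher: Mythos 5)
Your ``if'' direction is essentially the paper's own argument: you replace $R$ by a rational solution $R'$ of the system $\alpha(R')=\alpha(R)$ over the roots with integer values and gauge by the ramified torus element $z^{-R'}$, with the same Maurer--Cartan computation; the paper does exactly this via \cite[Proposition 4.5]{s}, which produces a cocharacter $\nu$ with $\alpha(\nu)=b\alpha(R)$ and then gauges by $\hat t^{1/b}$ on the $b$-cover. That half is sound.

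The ``only if'' direction has a genuine gap, and it sits exactly at the step you yourself flag as the main obstacle. First, a smaller point: you begin from a reduction of $\mathsf{A}$ over $\mathbb{K}_b$ to a \emph{constant} form $\Lambda\frac{dz}{z}$, but Definition \ref{d2} only gives a logarithmic form; passing to a constant reduced form is a nontrivial prior result, which the paper imports as \cite[Theorem 4.2]{s} (also \cite[Theorem 3.6]{h}). Second, and decisively: the cocycle normalization --- adjusting $\bar g$ inside the stabilizer of $\Lambda\frac{dz}{z}$ so that $c_{\sigma_0}=\exp(-2\pi\sqrt{-1}\,R)$ with $R$ rational in $\mathfrak{t}_\mathbb{R}$ --- is precisely the classification of intrinsically regular connections with reduced form $Y\frac{dz}{z}$ by conjugacy classes of finite-order elements of $C_G(Y)$, which the paper does not reprove but cites from \cite{c}, \cite{s}, \cite{h}. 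Your sketch does not close this: the norm relation $\prod_{k=0}^{b-1}\sigma_0^k(c_{\sigma_0})=1$ does not force $c_{\sigma_0}$ to have finite order, because $\sigma_0$ acts on the stabilizer through the (generally nontrivial) twist by the formal monodromy; one still needs the triviality of the cohomology of the pro-unipotent part of the stabilizer, the reduction to its reductive quotient (which may be disconnected), and the conjugation of the resulting finite-order class into a maximal torus with a real rational logarithm. Moreover, your proposed fallback of invoking Boalch's theorem (Theorem \ref{xx} here, \cite[Theorem 6]{b}) cannot supply this step: that theorem takes a $\Theta$-parahoric structure as input, whereas what is needed is exactly the production of such a structure from intrinsic regularity. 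Granting the normalized cocycle, the rest of your descent --- $\hat h=z^{R}\bar g\in G(\mathbb{K})$, integrality of $\alpha(R)$ on the surviving root components, and $\Theta=-R$ --- does recover the paper's conclusion, which the paper phrases via $\gamma=\exp(2\pi\sqrt{-1}\,\Gamma)\in C_G(Y)$ of order $b$, the gauge by the associated cocharacter, and $\Theta=-b\Gamma$.
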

\begin{proof}Firstly, we show that any  $\Theta$-parahoric formal connection $\mathsf{A}$ is  intrinsically regular. By definition, under a suitable $\Theta$-parahotic trivialization $e$, we write
 \begin{align*}
  \mathsf{A}(e)=\sum_{i\in \mathbb{Z}}X_iz^i\frac{dz}{z}
 \end{align*}
 with $[R,X_i]=iX_i$ for some semisimple element $R$. Let $\mathfrak{t}'$ be  a Cartan subalgebra of $\mathfrak{g}$ containing $R$, $T'$ be  the maximal torus of $G$ corresponding to $\mathfrak{t}'$ and $\bigtriangleup'$ be the corresponding root system. It is known that  there is  $\nu\in (E_{T'})_*$ such that $\alpha(\nu)=b\alpha(R)$ for some positive integer $b$ and for  any root $\alpha\in \bigtriangleup'^{(R)}_\mathbb{Z}:=\{\alpha\in \bigtriangleup':\alpha(R)\in \mathbb{Z}\}$ (see \cite[Proposition 4.5]{s}). Define $\hat t=\nu(z^{-1})\in T'(\mathbb{F})$, then $\alpha(\hat t)=z^{-\alpha(\nu)}=z^{-b\alpha(R)}$ for any $\alpha\in\bigtriangleup'^{(R)}_\mathbb{Z}$.
  Let $(\Delta^\times)^{\sharp_b}$  be the the $b$-cover of $\Delta^\times$ with $\zeta=\sqrt[b]{z}$, and denote  by $\bullet^{\sharp_b}$  the pullback of $\bullet$ on  $\Delta^\times$ to $(\Delta^\times)^{\sharp_b}$.
Then
 the the formal connection $\mathsf{A}^{\sharp_b}$ on $\mathbf{P}^{\sharp_b}$ is written as
 \begin{align*}
  \mathsf{A}^{\sharp_b}( e^{{\sharp_b}})=b(\sum_{i\in \mathbb{Z}}X_i\zeta^{bi})\frac{d\zeta}{\zeta}
 \end{align*}
 under the trivialization $e^{\sharp_b}$ of $\mathbf{P}^{\sharp_b}$.
We can calculate
\begin{align*}
\widetilde{\mathrm{Ad_{\hat t^{\frac{1}{b}}}}}(\mathsf{A}^\sharp( e^\sharp))=b(\sum_{i\in \mathbb{Z}}X_i-\nu)\frac{d\zeta}{\zeta},
\end{align*}
 which is a logarithmic form.

Conversely, according to \cite[Theorem 4.2]{s} (also \cite[Thoerem 3.6]{h}), we can assume the  intrinsically regular formal connection  $\mathsf{A}$ have the reduced form $Y\frac{dz}{z}$ for $Y\in\mathfrak{g}$. It is known that the intrinsically regular formal connections which  have  the reduced form $Y\frac{dz}{z}$ are classified by the Galois cohomology $H^1(\mathrm{Gal}(\overline{\mathbb{F}}/\mathbb{F}),C_G(Y))$, namely the  conjugacy
classes of elements of finite order in $C_G(Y)=\{\gamma\in G: \mathrm{Ad}_\gamma Y=Y\}$ \cite{c,s,h}. Hence, one picks an element $\gamma\in C_G(Y)$ of order $b$, which can be assumed to lie in some maximal torus $T$ of $G$. The Jordan decomposition of $Y$ is given by $Y=S+N$ for semisimple $S$ and  nilpotent $N$. Moreover, after suitable gauge transformation, we can assume $S\in \mathfrak{t}$ and $\mathrm{Ad}_{\gamma}N=N$, thus $N$ has a finite decomposition
\begin{align*}
 N=\sum\limits_{q\in \mathbb{Z}}N_{\frac{q}{b}}
\end{align*}
 with $[\Gamma, N_{\frac{q}{b}}]=\frac{q}{b}N_{\frac{q}{b}}$, where $\gamma=\exp{(2\pi\sqrt{-1}\Gamma)}$. Let $\chi\in \widehat{G}$ be the cocharacter associated to $\Gamma$, then $\widetilde{\mathrm{Ad}}_{\chi^b}$-gauge transformation of the reduced form can be written as
 \begin{align*}
   \widetilde{\mathrm{Ad}}_{\chi^b}(Y\frac{dz}{z})=(S'+\sum\limits_{q\in \mathbb{Z}}N_{\frac{q}{b}}z^q)\frac{dz}{z}
 \end{align*}
 for some $S'\in \mathfrak{t}$.  We choose $\Theta=-b\Gamma\in \mathfrak{t}_\mathbb{R}$, then  $N_{\frac{q}{b}}z^q\in\widehat{\mathfrak{l}}_\Theta$, hence $\mathsf{A}$ is a $\Theta$-parahoric formal connection of Boach-type.
\end{proof}

\begin{theorem}\label{xx}(\cite[Theorem 6]{b}) Let  $ \mathsf{A}$ be a  $\Theta$-parahoric formal connection on a   formal parahoric principal $G$-bundle $(\mathbf{P}, \Theta, \mathcal{P})$,  then $ \mathsf{A}$ is of Boach-type.
\end{theorem}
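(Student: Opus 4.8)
The plan is to run a parahoric analogue of the Babbitt--Varadarajan reduction of regular-singular connections entirely inside the gauge group $\widehat{P}_\Theta$; this is exactly the content of Boalch's \cite[Theorem 6]{b}, and the role of the parahoric structure is precisely to restrict the allowed gauge transformations to $\widehat{P}_\Theta$. Fix a $\Theta$-parahoric trivialization $e$. Since $\mathsf{A}$ is $\Theta$-parahoric we may write $\mathsf{A}(e)=\hat X\frac{dz}{z}$ with $\hat X=\sum_{k\geq 0}\hat X[k]\in\widehat{\mathfrak p}_\Theta$, $\hat X[k]\in\widehat{\mathfrak p}_\Theta[k]$, so that $\hat X[0]=\mathrm{Res}(\hat X)=:r\in\widehat{\mathfrak l}_\Theta$. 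Identifying $\widehat{\mathfrak l}_\Theta$ with $\mathrm{Lie}\,C_G(\Theta)$ and taking there the Jordan decomposition $r=s+n$, a conjugation by a suitable element of $C_G(\Theta)\subset\widehat{P}_\Theta$ --- whose effect on the residue is controlled by Proposition \ref{1} and which contributes no Maurer--Cartan term --- lets us assume $s\in\mathfrak t$, so that $s$ is a genuine constant and $n=\sum_{\lambda}n_{\lambda}z^{-\lambda}$ with $n_{\lambda}\in\mathfrak g^{(\Theta)}_{\lambda}\cap C_{\mathfrak g}(s)$.

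The second step is to remove, one degree at a time, the non-resonant part of $\hat X$. For $k=1,2,\dots$ in turn, conjugate by $\exp(\hat Z_k)$ with $\hat Z_k\in\widehat{\mathfrak p}_\Theta[k]$: since this generator is homogeneous of degree $k$, the degrees $<k$ of the connection form --- in particular the residue --- are untouched, and the new degree-$k$ part equals $\hat X[k]+[\hat Z_k,r]+\delta\hat Z_k$, where $\delta=z\,d/dz$ acts by $\delta(Yz^{i})=iYz^{i}$. The linear operator $\hat Z\mapsto[\hat Z,r]+\delta\hat Z$ on $\widehat{\mathfrak p}_\Theta[k]$ has semisimple part acting on a homogeneous vector $X_{\lambda,i}z^{i}$ (with $\lambda+i=k$) by the scalar $i-\sigma$, where $[s,X_{\lambda,i}]=\sigma X_{\lambda,i}$; hence it is invertible off the resonant subspace $\{\sigma=i\}$, and we take $\hat Z_k$ in a complement so as to cancel the non-resonant part of $\hat X[k]$. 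Because every $z$-power occurring in $\widehat{\mathfrak p}_\Theta[k]$ is at least $k-\lambda_m$, the factors $\exp(\hat Z_k)$ become $z$-adically small, the infinite product $\prod_{k\geq 1}\exp(\hat Z_k)$ converges to an element of $\widehat{P}_\Theta$ (its tail consisting of generators of the third type), and every coefficient of the transformed form stabilizes after finitely many steps. After this reduction $\hat X=r+\sum_{k\geq 1}\hat X[k]^{\mathrm{res}}$, where every surviving homogeneous $X_{\lambda,i}z^{i}$ satisfies $[s,X_{\lambda,i}]=iX_{\lambda,i}$.

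Finally one must exhibit a single semisimple $R\in\mathfrak g$ with $[R,X_{i}]=iX_{i}$ on the $z^{i}$-coefficient $X_{i}$ of the reduced $\hat X$, which is precisely the Boalch-type condition. The resonance identity already grades the surviving higher-order terms correctly under $\mathrm{ad}_{s}$, and grades the semisimple residue $s$ (which sits at $z^{0}$) correctly; the genuine obstruction is the nilpotent residue $n$, since $n_{\lambda}$ sits at $z^{-\lambda}$ yet is killed by $\mathrm{ad}_{s}$, and similarly a surviving resonant term in the same root direction but at a different $z$-power cannot be graded consistently with it. I expect this to be the crux: one must either absorb the offending pieces of $n$ by additional degree-zero gauge transformations in $\widehat{L}_\Theta$ (the analogue of the ``naive leading term'' manipulations of Babbitt--Varadarajan), or correct $s$ by an element of $\mathfrak t$ acting as $-\lambda$ on $n_{\lambda}$, and then verify --- this is the real bookkeeping --- that the resonant data and the resulting nilpotent residue are simultaneously homogeneous for one semisimple $R$ adapted to $\Theta$; equivalently, that after the multivalued substitution $z^{-R}$ the reduced form becomes a constant logarithmic form. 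As a consistency check and an alternative route, a $\Theta$-parahoric connection is automatically intrinsically regular --- apply $\widetilde{\mathrm{Ad}}_{z^{\Theta}}$ to land in $\mathfrak g(\mathbb A)\tfrac{dz}{z}+\Theta\tfrac{dz}{z}$ and invoke classical Levelt theory --- so the existence of such an $R$ can also be read off from Theorem \ref{d}, provided one checks that the reduced form produced there is compatible with the given weight $\Theta$. The two delicate points are therefore the $z$-adic convergence in the second step and, above all, the construction of $R$.
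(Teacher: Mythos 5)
The first thing to note is that the paper does not prove this statement at all: it is imported as \cite[Theorem 6]{b}, and the only added content is the remark that follows, recording that in the reduced form $\mathsf{A}(e)=\sum_{\lambda+i\geq 0}X_{\lambda,i}z^i\frac{dz}{z}$ one may take $R$ to be the semisimple part of $X_{0,0}$. So your proposal is really an attempted reconstruction of Boalch's proof, and judged as such it has a genuine gap --- exactly the one you flag yourself. Your steps 1--2 (Jordan decomposition of the residue in $\widehat{\mathfrak{l}}_\Theta$, then degree-by-degree removal of non-resonant terms by gauge transformations $\exp(\hat Z_k)$ with $\hat Z_k\in\widehat{\mathfrak p}_\Theta[k]$) follow the standard Levelt/Babbitt--Varadarajan scheme and are broadly plausible in the parahoric setting, modulo a technical point you gloss: the operator $\hat Z\mapsto[\hat Z,r]+\delta\hat Z$ on $\widehat{\mathfrak p}_\Theta[k]$ does not split into commuting semisimple and nilpotent parts on the nose, since $\delta$ and $\mathrm{ad}_n$ fail to commute when $n$ has nonconstant $z$-dependence (one needs a filtration argument to justify "invertible off the resonant subspace"). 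But the output of step 2 is only that the surviving coefficients are resonant for $\mathrm{ad}_s$. The theorem asserts the existence of a \emph{single} semisimple $R\in\mathfrak g$ with $[R,X_i]=iX_i$ for \emph{every} $z$-coefficient of the reduced form, in particular for the nilpotent part $n=\sum_\lambda n_\lambda z^{-\lambda}$ of the residue, which commutes with $s$ yet sits at nonzero powers of $z$. Producing such an $R$ --- and verifying that any correction of $s$ (e.g.\ by an element acting as $-\lambda$ on $n_\lambda$) does not destroy the grading of the higher resonant terms you have just arranged --- is precisely the content of Boalch's theorem; your proposal explicitly defers it ("the crux", "the real bookkeeping") rather than carrying it out, so the statement has not been proved.

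Your fallback route through Theorem \ref{d} does not close the gap either. The converse direction of Theorem \ref{d} only produces \emph{some} parahoric weight and structure for which $\mathsf{A}$ is of Boalch type, whereas the present theorem asserts Boalch type for the \emph{given} $(\mathbf P,\Theta,\mathcal P)$; the compatibility "with the given weight $\Theta$" that you acknowledge needing is exactly the missing content, and within this paper the combined statement (Corollary \ref{d4}) is deduced from Theorem \ref{d} \emph{together with} the theorem under review, so that route is effectively circular here. Note also that the preliminary "consistency check" is shakier than stated: for irrational $\Theta\in\mathfrak t_{\mathbb R}$ the element $z^{\Theta}$ is not a gauge transformation over $\overline{\mathbb K}$, and $\mathrm{Ad}_{z^{\Theta}}$ sends $\widehat{\mathfrak p}_\Theta$ into series with non-negative but generally non-integral exponents, so one cannot simply "land in $\mathfrak g(\mathbb A)\frac{dz}{z}+\Theta\frac{dz}{z}$ and invoke classical Levelt theory" without the rational/integral approximation step of the kind used in the paper's proof of Theorem \ref{dd}. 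In short: the frame of the argument is the right one, but the step that constitutes the theorem --- the construction of the grading element $R$ compatible with the given $\Theta$ --- is missing, and in the paper it is supplied only by citation to \cite[Theorem 6]{b}.
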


\begin{remark}Actually, write $\mathsf{A}(e)=\hat X\frac{dz}{z}=\sum\limits_{\lambda+i\geq 0}\sum\limits_{i\in\mathbb{Z}}X_{\lambda,i}z^i\frac{dz}{z}$ under a $\Theta$-parahoric  trivialization $e$, the semisimple element $R$ is chosen as the semisimple part of  $X_{0,0}$.
\end{remark}

\begin{corollary}\label{d4}
Let $\mathsf{A}$ be a formal connection  on  a formal principal $G$-bundle $\mathbf{P}$, then
$\mathsf{A}$ is  intrinsically regular if and only if $\mathbf{P}$ is endowed with a $\Theta$-parahoric structure such that $\mathsf{A}$ is a $\Theta$-parahoric formal connection.
\end{corollary}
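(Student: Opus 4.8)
The plan is to obtain Corollary \ref{d4} as a formal consequence of Theorem \ref{d} together with Theorem \ref{xx}: the former identifies intrinsic regularity with the existence of a parahoric structure making $\mathsf{A}$ of Boalch-type, while the latter shows that, for a fixed parahoric structure, being a $\Theta$-parahoric formal connection already forces the Boalch-type shape. So the real task is only to check that, for a fixed $\Theta$-parahoric structure, the conditions ``$\mathsf{A}$ is of Boalch-type'' and ``$\mathsf{A}$ is a $\Theta$-parahoric formal connection'' are equivalent, and then to chain the two theorems.

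For the ``only if'' direction I would start from an intrinsically regular $\mathsf{A}$ and invoke Theorem \ref{d} to produce a weight $\Theta\in\mathfrak{t}_{\mathbb{R}}$ and a $\Theta$-parahoric structure $\mathcal{P}$ on $\mathbf{P}$ for which $\mathsf{A}$ is of Boalch-type; so for some $\Theta$-parahoric trivialization $e$ one has $\mathsf{A}(e)=\hat X\frac{dz}{z}$ with $\hat X=\sum_iX_iz^i\in\widehat{\mathfrak{p}}_\Theta$. This exhibits $\mathsf{A}(e)$ as a $\Theta$-logarithmic form for that one trivialization; I would then pass to an arbitrary $\Theta$-parahoric trivialization using the remark following the definition of $\Theta$-parahoric connections, i.e. the fact (contained in \cite[Lemma 3]{b}) that $\widetilde{\mathrm{Ad}}_{\hat g}$ preserves $\widehat{\mathfrak{p}}_\Theta\frac{dz}{z}$ for every $\hat g\in\widehat{P}_\Theta$, to conclude that $\mathsf{A}$ is a $\Theta$-parahoric formal connection on $(\mathbf{P},\Theta,\mathcal{P})$.

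For the ``if'' direction I would assume $\mathbf{P}$ carries a $\Theta$-parahoric structure for which $\mathsf{A}$ is a $\Theta$-parahoric formal connection, apply Theorem \ref{xx} (Boalch's \cite[Theorem 6]{b}) to see that $\mathsf{A}$ is of Boalch-type for this structure, and then feed this into the converse half of Theorem \ref{d} to conclude that $\mathsf{A}$ is intrinsically regular.

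The argument is thus mostly bookkeeping, and the only point that needs any care --- the ``main obstacle'', such as it is --- is the quantifier mismatch between the one-trivialization definition of ``Boalch-type'' and the all-trivializations definition of ``$\Theta$-parahoric formal connection''; this is exactly what the stability remark handles. If I wanted that step self-contained rather than cited, I would recompute directly, for $\hat g\in\widehat{P}_\Theta$, that $\mathrm{Ad}_{\hat g}$ maps $\widehat{\mathfrak{p}}_\Theta$ into itself and that the Maurer--Cartan term $\mathcal{C}(\hat g^{*}\omega)$ lies in $\widehat{\mathfrak{p}}_\Theta\frac{dz}{z}$, both being part of \cite[Lemma 3]{b}.
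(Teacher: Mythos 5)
Your proposal is correct and takes essentially the same route the paper intends: Corollary \ref{d4} is stated without a separate proof precisely because it is the immediate combination of Theorem \ref{d} (intrinsic regularity $\Leftrightarrow$ existence of a parahoric structure making $\mathsf{A}$ of Boalch-type) with Theorem \ref{xx} ($\Theta$-parahoric $\Rightarrow$ Boalch-type) that you describe. Your extra care about the one-trivialization versus all-trivializations quantifier, handled via the remark citing \cite[Lemma 3]{b}, is consistent with the paper's set-up (where Boalch-type connections are by definition already $\Theta$-parahoric) and does not change the argument.
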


\begin{definition}\label{d1}We say a  formal connection   $\mathsf{A}$ on a formal principal $G$-bundle $\mathbf{P}$ is extrinsically regular if for
all representations $(V,\rho)$ consisting of a finite-dimensional vector space $V$ and  a homomorphism   $\rho:G\rightarrow GL(V)$, the induced connection $\mathsf{A}_\rho$ on the associated  vector bundle $\mathbf{P}_\rho=\mathbf{P}\times_\rho V$ is  regular, namely there is a trivialization $e$ of $\mathbf{P}_\rho$ such that $\mathsf{A}_\rho(e)$ is a logarithmic form.
\end{definition}

In \cite{bs}, Bremer and  Sage developed  the theory of minimal $K$-types for formal connections, which provides a  criteria for the extrinsic regularity of formal connections. Let us briefly introduce their result. Let $(V,\rho)$ be a finite dimensional complex representation of $G$, and $(\widehat{V}=V\otimes_\mathbb{C}\mathbb{K},\widehat{\rho}=\rho\otimes \mathrm{Id})$  be the corresponding representation of $\widehat{G}$. For any  $x\in \mathbf{BT}(\widehat{G})$, there is a canonical decreasing $\mathbb{R}$-filtration $\{\widehat{V}_{x,r}\}_{r\in \mathbb{R}}$ on $\widehat{V}$, called the Moy-Prasad filtration, satisfying the following properties
\begin{itemize}
   \item $z\widehat{V}_{x,r}=\widehat{V}_{x,r+1}$,
   \item $\widehat{V}_{\hat gx,r}=\hat g\widehat{V}_{x,r}$ for $\hat g\in \widehat{G}$,
   \item the stabilizer of $\widehat{V}_{x,r}$ is  the subgroup $\mathrm{St}(x)$ of $\widehat{G}$,
   \item the set of critical numbers $r$ with $\widehat{V}_{x,r}^+=\bigcup\limits_{s>r}\widehat{V}_{x,s}\subsetneq \widehat{V}_{x,r}$ is discrete.
 \end{itemize}
In particular, for the adjoint representation, we have the Moy-Prasad filtration $\{\widehat{\mathfrak{g}}_{x,r}\}_{r\in \mathbb{R}}$ on $\widehat{\mathfrak{g}}$. A
triple $(x,r,\beta)$ consisting of $x\in \mathbf{BT}(\widehat{G})$, a nonnegative real number $r$ and $\beta\in(\widehat{\mathfrak{g}}_{x,-r}/\widehat{\mathfrak{g}}^+_{x,-r})\otimes \frac{dz}{z}$ is called a $\widehat{G}$-stratum of depth $r$.
 \begin{theorem}(\cite[Theorem 2.14]{bs})\label{z}
  Let $\mathsf{A}$ be a formal connection  on  a formal principal $G$-bundle $\mathbf{P}$, and $e$ be a trivialization of $\mathbf{P}$.
 \begin{enumerate}
   \item There exists a $\widehat{G}$-stratum $(x,r,\beta)$ with  $x$ being a rational point in $\mathbf{BT}(\widehat{G})$ such that
   \begin{align}\label{d}
    (\mathsf{A}(e)-s\frac{dz}{z}- \beta)(\widehat{V}_{x,s})\subset \widehat{V}^+_{x,s-r}\otimes \frac{dz}{z}
   \end{align}
   for any representation $V$ of $G$ and any real number $s$, where $\mathsf{A}$ and $e$ are viewed as the induced  formal connection and trivialization on the adjoint bundle, $\beta$ is viewed as a representative lying in  $\Omega^1(\widehat{\mathfrak{g}}_{x,-r})$.
   \item Define
   $r_{(\mathbf{P},\mathsf{A},e)}$ to be the minimal depth of $\widehat{G}$-strata satisfying the condition \eqref{d},
  then $\mathsf{A}$ is  extrinsically regular if and only if $r_{(\mathbf{P},\mathsf{A},e)}=0$.
  \item If a $\widehat{G}$-stratum $(x,r,\beta)$ with  $r>0$  satisfies the condition \eqref{d}, then $r=r_{(\mathbf{P},\mathsf{A},e)}$ if and only if each representative
$\beta$ is non-nilpotent.
 \end{enumerate}
 \end{theorem}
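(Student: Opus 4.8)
The plan is to reconstruct the argument of \cite{bs}, organized so that the parahoric reduction theory of this paper carries the main load. First I would fix $e$, write $\mathsf{A}(e)=\hat B\,\frac{dz}{z}$ with $\hat B\in\widehat{\mathfrak{g}}$, and translate \eqref{d}: since Moy--Prasad filtrations are functorial in $(V,\rho)$, the adjoint case of \eqref{d} forces all others, and unwinding it shows that a triple $(x,r,\beta)$ with $r\ge0$ satisfies \eqref{d} exactly when $\hat B\in\widehat{\mathfrak{g}}_{x,-r}$ and $\beta$ is the class of $\mathsf{A}(e)$ in $\big(\widehat{\mathfrak{g}}_{x,-r}/\widehat{\mathfrak{g}}^{+}_{x,-r}\big)\otimes\frac{dz}{z}$ (the $s$-dependence and the term $-s\frac{dz}{z}$ being the bookkeeping of the $d$-part of the connection). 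Concretely, in an apartment $\mathcal{A}\cong\mathfrak{t}_{\mathbb{R}}$ and expanding $\hat B=\sum_{\alpha}\sum_i X_{\alpha,i}z^i$ along root spaces, the least $r$ that works at $\Theta\in\mathcal{A}$ is $r^{e}_\Theta=\max\!\big(0,\ \max\{-(\alpha(\Theta)+i):X_{\alpha,i}\ne0\}\big)$, a convex piecewise-affine function of $\Theta$, and $r_{(\mathbf{P},\mathsf{A},e)}$ is the infimum of $r^{e'}_\Theta$ over all trivializations $e'$ and all $\Theta\in\mathcal{A}$; by gauge-equivariance of \eqref{d} this is in fact a gauge invariant of $(\mathbf{P},\mathsf{A})$, the slope.

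I would prove (1) and (3) together, by descent. Existence of \emph{some} stratum is immediate, since $\widehat{\mathfrak{g}}_{\Theta,-r}\uparrow\widehat{\mathfrak{g}}$ as $r\to\infty$. For a fixed $e'$, $r^{e'}_\Theta$ is convex, piecewise-affine and bounded below by $0$, hence attains its minimum over $\mathcal{A}$ at a rational point (the defining functionals have integral coefficients). It then remains to minimize over $e'$: given a stratum $(x,r,\beta)$ with $r>0$, let $\bar\beta\in\mathfrak{g}$ be the leading coefficient of a homogeneous representative of $\beta$. If $\bar\beta$ is nilpotent, then --- this is exactly the nilpotent branch of the Babbitt--Varadarajan flowchart recalled in the introduction, in its parahoric form (Proposition~\ref{x}) --- a parahoric gauge transformation built from a one-parameter subgroup $z^{\nu}$ attached to an $\mathfrak{sl}_2$-triple through $\bar\beta$ (equivalently, a move to another building point $x'$) strictly decreases the depth; since the Babbitt--Varadarajan algorithm terminates (the centralizer dimension in the derived algebra drops while a nilpotent orbit grows), finitely many such steps reach either depth $0$ or a stratum with \emph{non-nilpotent} leading term. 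A non-nilpotent leading term can never be lowered: inside the parahoric at $x'$ the residue changes only by $\mathrm{Ad}$ of $C_G(\Theta')$ (Proposition~\ref{1}), which preserves non-nilpotence, while any motion genuinely leaving that parahoric raises the depth. Hence the minimal-depth stratum exists, sits at a rational point, and --- when its depth is positive --- has non-nilpotent leading term; conversely any stratum of minimal positive depth must have non-nilpotent leading term, for otherwise we could descend. This is (1) and (3).

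For (2): if $r_{(\mathbf{P},\mathsf{A},e)}=0$, then $\hat B\in\widehat{\mathfrak{g}}_{x,0}$ for some $x$, so by functoriality of Moy--Prasad filtrations each $\mathsf{A}_\rho$ carries a depth-$0$ stratum, and a vector-bundle connection of non-positive slope is regular singular by classical Levelt--Turrittin theory; thus $\mathsf{A}$ is extrinsically regular (alternatively, $\mathsf{A}$ is then a $\Theta$-parahoric connection, so Corollary~\ref{d4} gives intrinsic, hence extrinsic, regularity). Conversely, if $r_{(\mathbf{P},\mathsf{A},e)}>0$, pick a minimal stratum $(x,r,\beta)$ with non-nilpotent $\beta$ and a faithful $\rho_0\colon G\hookrightarrow GL(V_0)$; then $d\rho_0$ preserves the Jordan decomposition, so $d\rho_0(\beta)$ is again non-nilpotent, $\mathsf{A}_{\rho_0}$ contains a fundamental stratum of positive depth, and hence --- by the classical $GL_n$ theory of slopes --- $\mathsf{A}_{\rho_0}$ is irregular, so $\mathsf{A}$ is not extrinsically regular.

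The hard part will be the descent used above: proving that a nilpotent leading term can always be annihilated by a motion along the building that \emph{strictly} lowers the depth, and that the resulting process terminates. This is precisely the content the parahoric reduction machinery of this paper is designed to supply --- choosing the cocharacter $\nu$ adapted to an $\mathfrak{sl}_2$-triple through $\bar\beta$, tracking the $\Theta$-reduced representations through the resulting parahoric gauge transformations, and controlling the Babbitt--Varadarajan termination invariants (centralizer dimension in the derived algebra, dimension of a nilpotent orbit). A secondary, essentially routine obstacle is the bookkeeping between the normalization $\mathsf{A}(e)=\hat B\frac{dz}{z}$ and the general form $\sum_{r\ge-c}\hat A^{(r)}z^rdz$, together with the precise handling of the $-s\frac{dz}{z}$ shift and of the central directions when passing between the adjoint stratum and a faithful representation.
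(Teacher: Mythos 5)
You should first be aware that the paper contains no proof of this statement: Theorem \ref{z} is imported verbatim from Bremer--Sage \cite[Theorem 2.14]{bs} and is used as a black box (in the converse direction of Theorem \ref{dd}), so there is no internal argument to compare yours against. What you have written is an attempt to reprove the cited result, and as such it correctly identifies the Bremer--Sage route (translate \eqref{d} into a Moy--Prasad containment $\mathsf{A}(e)\in\Omega^1(\widehat{\mathfrak{g}}_{x,-r})$ with $\beta$ its coset, then build the theory of fundamental strata: nilpotent leading term $\Rightarrow$ depth can be strictly lowered, non-nilpotent leading term $\Rightarrow$ depth is minimal), but it leaves the steps that carry all the content unproved.

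Concretely: (i) the descent step --- that a stratum whose coset $\beta$ has nilpotent representatives can be replaced by a stratum of strictly smaller depth, and that this process terminates --- is asserted and deferred to ``the parahoric reduction machinery''; but Propositions \ref{x} and \ref{xx} of this paper reduce coefficients into centralizers at a \emph{fixed} weight $\Theta$ and fixed leading index $c>1$, they do not produce a new point $x'\in\mathbf{BT}(\widehat{G})$ with strictly smaller real-valued Moy--Prasad depth, and the Babbitt--Varadarajan termination invariants you invoke control the integer leading index, not the depth of strata; making this precise is the heart of \cite{bs} and is missing. (ii) The claim that a non-nilpotent leading term ``can never be lowered'' rests on Proposition \ref{1} plus the bare assertion that ``any motion genuinely leaving that parahoric raises the depth''; that assertion is exactly the statement to be proved (a fundamental stratum computes the minimum over \emph{all} $x\in\mathbf{BT}(\widehat{G})$ and all strata), and it does not follow from equivariance of the residue under $\widehat{P}_\Theta$. (iii) In part (2), the converse needs compatibility of Moy--Prasad filtrations and of fundamental strata under a faithful embedding $G\hookrightarrow GL(V_0)$ (embedding of buildings, comparison with the classical $GL_n$ slope theory); the one-line appeal to ``$d\rho_0$ preserves the Jordan decomposition'' does not establish that a positive-depth fundamental $\widehat{G}$-stratum induces a positive-depth fundamental $\widehat{GL(V_0)}$-stratum, hence irregularity. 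Note also that ``non-nilpotent'' in (3) means every representative of the coset $\beta\in\widehat{\mathfrak{g}}_{x,-r}/\widehat{\mathfrak{g}}^{+}_{x,-r}$ is non-nilpotent, so extracting ``the leading coefficient of a homogeneous representative'' does not by itself give a well-defined dichotomy on which to run your case analysis; similarly, your identification of $r_{(\mathbf{P},\mathsf{A},e)}$ with an infimum over trivializations is an extra claim (the statement fixes $e$ and varies the stratum) that itself requires the gauge-invariance you are trying to prove. So the outline is the right one, but the three central steps are presupposed rather than proved; citing \cite{bs}, as the paper does, or reproducing their fundamental-strata arguments in full, is what is actually needed.
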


\begin{theorem}\label{dd} Let $\mathsf{A}$ be a formal connection  on  a formal principal $G$-bundle $\mathbf{P}$, then 
$\mathsf{A}$ is  extrinsically regular if and only if $\mathbf{P}$ is endowed with a $\Theta$-parahoric structure such that $\mathsf{A}$ is a $\Theta$-parahoric formal connection.
\end{theorem}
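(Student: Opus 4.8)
The plan is to prove Theorem \ref{dd} by combining Corollary \ref{d4} with the Bremer--Sage criterion (Theorem \ref{z}), using the classical fact that for vector bundles, regularity of a formal connection in the sense of having a logarithmic reduced form is equivalent to the analytic notion of a regular singular point. First I would observe that one direction is essentially already in hand: if $\mathbf{P}$ carries a $\Theta$-parahoric structure with $\mathsf{A}$ a $\Theta$-parahoric formal connection, then by Corollary \ref{d4} (equivalently by Theorems \ref{d} and \ref{xx}) $\mathsf{A}$ is intrinsically regular, so there is a reduced form $Y\frac{dz}{z}$ over $\overline{\mathbb{K}}$. Pushing forward along any representation $\rho: G \to GL(V)$ commutes with gauge transformations, so $\mathsf{A}_\rho$ acquires a logarithmic reduced form over $\overline{\mathbb{K}}$; but a formal connection on a vector bundle with a logarithmic form over a finite cover has a regular singularity, hence (by the classical Levelt--Turrittin theory for $GL_n$) already has a logarithmic form over $\mathbb{K}$ itself. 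Thus $\mathsf{A}_\rho$ is regular for every $(V,\rho)$, i.e. $\mathsf{A}$ is extrinsically regular.

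For the converse, suppose $\mathsf{A}$ is extrinsically regular and fix any trivialization $e$. By Theorem \ref{z}(2), the minimal depth $r_{(\mathbf{P},\mathsf{A},e)}$ equals $0$. By Theorem \ref{z}(1) there is a $\widehat G$-stratum $(x,0,\beta)$ with $x$ rational satisfying \eqref{d}; depth $0$ means $\beta \in (\widehat{\mathfrak{g}}_{x,0}/\widehat{\mathfrak{g}}^+_{x,0})\otimes\frac{dz}{z}$, and condition \eqref{d} with $s=0$ says $(\mathsf{A}(e)-\beta)(\widehat{V}_{x,0}) \subset \widehat{V}^+_{x,0}\otimes\frac{dz}{z}$ for the adjoint representation, i.e. $\mathsf{A}(e) \in \widehat{\mathfrak{g}}_{x,0}\otimes\frac{dz}{z}$ modulo terms landing in $\widehat{\mathfrak{g}}^+_{x,0}\otimes\frac{dz}{z}$; running \eqref{d} over all $s$ upgrades this to $\mathsf{A}(e) \in \widehat{\mathfrak{g}}_{x,0}\otimes\frac{dz}{z}$, which is exactly the statement that $\mathsf{A}(e)$ lies in $\widehat{\mathfrak{p}}_x\frac{dz}{z}$ where $\widehat{\mathfrak{p}}_x$ is the parahoric Lie algebra at $x$ (using the identification of the depth-zero Moy--Prasad piece $\widehat{\mathfrak{g}}_{x,0}$ with $\widehat{\mathfrak{p}}_x$). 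Since $x$ is rational, after applying a suitable element of $\widehat G$ we may assume $x$ lies in the standard apartment $\mathcal{A}$, say $x = \Theta \in \mathfrak{t}_{\mathbb{R}}$, and then $\widehat{\mathfrak{p}}_x = \widehat{\mathfrak{p}}_\Theta$; declaring $\mathcal{P}$ to be the subgroup of $\Aut(\mathbf{P})$ with $e(\mathcal{P}) = \widehat{P}_\Theta$ equips $\mathbf{P}$ with a $\Theta$-parahoric structure for which $e$ is a $\Theta$-parahoric trivialization and $\mathsf{A}(e) = \hat X\frac{dz}{z}$ with $\hat X \in \widehat{\mathfrak{p}}_\Theta$, i.e. $\mathsf{A}$ is a $\Theta$-parahoric formal connection.

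I would then cross-check consistency with the intrinsic picture: Corollary \ref{d4} already shows intrinsic regularity is equivalent to the existence of such a parahoric structure, and the ``easy direction'' above shows parahoric implies extrinsic regularity, so it would also suffice to prove directly that extrinsic regularity implies intrinsic regularity and invoke Corollary \ref{d4}; but the Bremer--Sage route is cleaner since it produces the parahoric point $x$ directly. The main obstacle I anticipate is the precise translation between the Moy--Prasad language of Theorem \ref{z} and the parahoric language of Section 2: one must verify that ``$\mathsf{A}(e)$ satisfies \eqref{d} for the stratum $(x,0,\beta)$'' is genuinely equivalent to ``$\mathsf{A}(e) \in \widehat{\mathfrak{p}}_x\frac{dz}{z}$'', which requires knowing that $\widehat{\mathfrak{g}}_{x,0} = \widehat{\mathfrak{p}}_x$ and that the filtration steps $\widehat{\mathfrak{g}}_{x,r}$ for $r>0$ exhaust the pro-unipotent radical directions — standard but needing care about normalization conventions (and the role of the extended vs. non-extended parahoric, $\widehat P_\Theta$ vs. $\widehat{\mathcal P}_\Theta$, which only differ in the component group of $C_G(\Theta)$ and does not affect the Lie algebra). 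A secondary point to be careful about is the rationality of $x$: Theorem \ref{z}(1) only guarantees $x$ rational, which is exactly what is needed to conjugate $x$ into $\mathfrak{t}_{\mathbb{R}}$ by an element of $\widehat G$ and to make sense of $z^\Theta$, so no extra argument is required there.
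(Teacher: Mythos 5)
Your proposal is correct, and the converse direction (extrinsic regularity $\Rightarrow$ parahoric) is essentially the paper's own argument: invoke Theorem \ref{z} to produce a depth-zero stratum $(x,0,\beta)$ with $x$ rational, move $x$ into the standard apartment by the $\widehat{G}$-action, and use the explicit Moy--Prasad description $\widehat{\mathfrak{g}}_{x,0}=\widehat{\mathfrak{p}}_x$, $\widehat{\mathfrak{g}}^+_{x,0}=\widehat{\mathfrak{u}}_x$ to conclude that $e$ is a $\Theta$-parahoric trivialization with $\mathsf{A}(e)\in\widehat{\mathfrak{p}}_\Theta\frac{dz}{z}$. Where you genuinely diverge is the forward direction. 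The paper argues directly: it reduces to a single faithful representation $G\hookrightarrow GL(V)$, in which $\Theta$ becomes a real diagonal matrix, replaces it by an integral diagonal matrix $\Xi$ with the same differences of diagonal entries, and checks that the $\widetilde{\mathrm{Ad}}_{z^\Xi}$-gauge transformation turns the induced connection into a logarithmic form over $\mathbb{K}$ itself -- no passage to $\overline{\mathbb{K}}$ and no appeal to the regular-singularity machinery for $GL_n$. You instead route through Corollary \ref{d4} (so through Theorems \ref{d} and \ref{xx}) to get intrinsic regularity, push the reduced form over $\overline{\mathbb{K}}$ forward along each $\rho$, and then invoke the classical fact that regularity of a formal connection on a vector bundle descends along finite ramified covers. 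This is logically sound and not circular (Corollary \ref{d4} does not depend on Theorem \ref{dd}), but it imports two external ingredients -- Boalch's Theorem \ref{xx} and $GL_n$-descent -- where the paper's computation with $z^\Xi$ is self-contained and also makes transparent why only integrality of the weight differences matters; on the other hand, your route has the virtue of exhibiting the implication as a formal consequence of the intrinsic picture, which is conceptually closer to Corollary \ref{ddd}. One shared point of looseness worth flagging: in the converse direction, condition \eqref{d} for the adjoint representation only controls $\mathsf{A}(e)$ modulo the center of $\mathfrak{g}$, so your ``upgrade'' to $\mathsf{A}(e)\in\widehat{\mathfrak{p}}_x\frac{dz}{z}$ (and likewise the paper's ``immediately implies'') is cleanest if one also uses \eqref{d} for a faithful representation, which Theorem \ref{z} in any case provides since the stratum condition is required for all $(V,\rho)$.
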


 \begin{proof}
 Firstly, we show that any  $\Theta$-parahoric formal connection $\mathsf{A}$ is  extrinsically regular. It suffices to check the definition for a faithful representation of $G$ into $GL(V)$. In this representation, $\Theta$ is a real diagonal  matrix,  then we can choose an integral diagonal matrix $\Xi$ such  that the differences between  diagonal elements do not change. Therefore, after the $\widetilde{\mathrm{Ad}}_{z^\Xi}$-gauge transformation for  $z^\Xi\in \widehat{GL(V)}$, the (induced) formal connection $\mathsf{A}$ is made into a  logarithmic form. Conversely,  $\mathsf{A}$ is  extrinsically regular, then due to Theorem \ref{z},  there exists a $\widehat{G}$-stratum $(x,0,\beta)$ with  $x$ being a rational point in $\mathbf{BT}(\widehat{G})$  satisfying the condition \eqref{d} for the adjoint representation of $G$, namely
 \begin{align*}
   (\mathsf{A}(e)-s\frac{dz}{z}- \beta)(\widehat{\mathfrak{g}}_{x,s})\subset \widehat{\mathfrak{g}}^+_{x,s}\otimes \frac{dz}{z}
 \end{align*}
 for certain trivialization $e$. By the action of $\widehat{G}$,  we can assume  $x$ is a rational weight in the standard apartment $\mathcal{A}$, then we have (cf. \cite[Section 2.6]{bs})
 \begin{align*}
  \widehat{\mathfrak{g}}_{x,s}=\bigoplus_{\chi(x)+i\geq s}\bigoplus_{\chi\in E^*(T)}\mathfrak{g}_\chi z^i,
 \end{align*}
 hence $\beta\in \mathfrak{l}_x\otimes\frac{dz}{z}$, and in particular $\widehat{\mathfrak{g}}_{x,0}=\widehat{\mathfrak{p}}_x, \widehat{\mathfrak{g}}^+_{x,0}=\widehat{\mathfrak{u}}_x$. It follows that $\mathsf{A}(e)(\widehat{\mathfrak{p}}_x)\subset\widehat{\mathfrak{p}}_x\otimes\frac{dz}{z}$, which immediately implies   that $\mathsf{A}$ is an $x$-parahoric formal connection.
\end{proof}

 \begin{corollary}\label{ddd} Let $\mathsf{A}$ be a formal connection  on  a formal principal $G$-bundle $\mathbf{P}$, then  $\mathsf{A}$ is  intrinsically regular if and only if $\mathsf{A}$ is  extrinsically regular.
 \end{corollary}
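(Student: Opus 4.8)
The plan is to deduce this immediately by chaining together the two characterizations of regularity already established, namely Corollary \ref{d4} for the intrinsic notion and Theorem \ref{dd} for the extrinsic one. Both of these results assert that the corresponding flavor of regularity is equivalent to one and the same geometric condition on $(\mathbf{P},\mathsf{A})$: the existence of some weight $\Theta\in\mathfrak{t}_\mathbb{R}$ together with a $\Theta$-parahoric structure on $\mathbf{P}$ with respect to which $\mathsf{A}$ becomes a $\Theta$-parahoric formal connection. Since this intermediate condition is literally the same in both statements, the two notions of regularity must coincide, and the corollary is just the transitivity of the two equivalences through their common middle term.

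Concretely, I would argue in two steps. If $\mathsf{A}$ is intrinsically regular, then by Corollary \ref{d4} there is a $\Theta$-parahoric structure on $\mathbf{P}$ making $\mathsf{A}$ a $\Theta$-parahoric formal connection; feeding this very structure into the ``if'' direction of Theorem \ref{dd} shows that $\mathsf{A}$ is extrinsically regular. Conversely, if $\mathsf{A}$ is extrinsically regular, then Theorem \ref{dd} produces a (possibly different) weight $\Xi\in\mathfrak{t}_\mathbb{R}$ and a $\Xi$-parahoric structure for which $\mathsf{A}$ is a $\Xi$-parahoric formal connection, and the ``if'' direction of Corollary \ref{d4} then yields that $\mathsf{A}$ is intrinsically regular. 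This closes the loop.

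The only point deserving a word of care is that the weights supplied by the two characterizations need not coincide, but this is harmless: each of Corollary \ref{d4} and Theorem \ref{dd} is an equivalence whose parahoric side is existential (``$\mathbf{P}$ is endowed with \emph{a} $\Theta$-parahoric structure such that \dots''), so one merely needs to transport a witness from one characterization to the other, not to match a prescribed $\Theta$. Consequently there is no genuine obstacle here; all the substantive work has already been carried out, in Theorem \ref{d} and Corollary \ref{d4} (reduction of regular formal connections to Boalch-type via Theorem \ref{xx}) and in Theorem \ref{dd} (via the Moy--Prasad filtration and the minimal $K$-type criterion of Theorem \ref{z}), and the present statement is a purely formal consequence.
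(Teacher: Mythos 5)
Your proposal is correct and matches the paper's intent exactly: the paper states Corollary \ref{ddd} without proof precisely because it is the formal concatenation of Corollary \ref{d4} and Theorem \ref{dd} through their common parahoric characterization, which is exactly the argument you give. Your remark that the existential quantification over the weight $\Theta$ makes the transport of witnesses harmless is a correct and worthwhile observation.
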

\begin{example}Let $\mathsf{A}$ be a formal connection  on  a formal principal $G$-bundle $\mathbf{P}$. If there is a trivialization $e$ of $\mathbf{P}$ such that 
\begin{align*}
 \mathsf{A}(e)=\sum\limits_{r\geq r_0}A^{(r)}z^rdz
\end{align*}
with $A^{(r)}\in\bigoplus\limits_{\lambda>0}\mathfrak{g}_\lambda^{(\Theta)}$ (or $A^{(r)}\in\bigoplus\limits_{\lambda<0}\mathfrak{g}_\lambda^{(\Theta)}$), then $\mathsf{A}$ is extrinsically and intrinsically regular.
Actually, assuming $r_0<-1$,  we take the weight 
\begin{align*}
  \Theta'=\frac{-r_0-1}{\min\{\lambda:\lambda>0\}} \ (\textrm{or }\Theta'=\frac{-r_0-1}{\max\{\lambda:\lambda<0\}}),
\end{align*}
then  $\mathbf{P}$ is endowed with a $\Theta'$-parahoric structure such that $e$ is a $\Theta'$-trivialization and $A$ is  a $\Theta'$-parahoric  connection.

\end{example}
 \begin{remark}\label{xc}\
Theorem \ref{d} and Theorem \ref{dd} can be regarded as the analogue of Deligne extension of regular connections on vector bundles under the context of formal principal $G$-bundles, which could be more conveniently statemented in terms of $\widehat{\mathbb{P}}_\theta$-torsors over $\Delta$ with connections for $\widehat{\mathbb{P}}_\theta$ denoting the parahoric Bruhat-Tits group scheme associated to the (extended) parahoric subgroup $\widehat{P}_\theta$ \cite{ppp}.
   Combining  with Boalch's  parahoric Riemann-Hilbert correspondence (cf. \cite[Theorem D, Corollary E]{b}) together, they imply that  regular connections are fully classified by the  topological data,  i.e. their monodromy representations.
   \end{remark}
   \begin{remark}
 Since principal bundle  has  two equivalent definitions by Tannakian formalism, where one is the  intrinsic definition as usual and the other one is the extrinsic definition via the tensor functor from the category of representations of structure group to the category of vector bundles, many definitions on principal bundle admit two approaches--intrinsic one and extrinsic one. In general, the equivalence  of these two ways (i.e. the Tannakian functoriality) is not evident. Here, we also give another three examples.
        \begin{enumerate}
        \item The notion of  semisimpleness of  formal connection $\mathsf{A}$ on a formal principal $G$-bundle $\mathbf{P}$ is  important for establishing the Jordan decomposition  of  $\mathsf{A}$ \cite{l,kz,k}.  It also can be defined by two approaches: the intrinsic definition is that there  exist a trivialization $e$ of $\mathbf{P}$ and $\bar g\in G(\overline{\mathbb{K}})$ such that the $\widetilde{\mathrm{Ad}}_{\bar g}$-gauge transformation of $\mathsf{A}(e)$ is of the form $\bar t dz$ for $\bar t\in \mathfrak{t}(\overline{\mathbb{K}})$, and the extrinsic definition is that the induced connection $\mathsf{A}_\mathfrak{g}$ on the adjoint vector bundle $\mathbf{P}_\mathfrak{g}$ is semisimple, i.e. any  $\mathsf{A}_\mathfrak{g}$-invariant subbundle has an $\mathsf{A}_\mathfrak{g}$-invariant complement. As expected, these two definitions are equivalent (see \cite[Theorem 8]{k}).
              \item Ramanathan introduced the semistability condition for principal bundles over algebraic curves  in an intrinsic way, and he showed it is equivalent to a certain extrinsic semistability (i.e. the semistability of the adjoint bundle), which plays a crucial role in the construction of the moduli space of principal bundles \cite{r}. However,  extrinsic stability is strictly stronger than intrinsic stability. In particular, if $G$ is not semisimple, there is no extrinsically  stable principal $G$-bundles \cite{hd}.  In addition, it is noteworthy that Ramanathan's equivalence does not hold anymore in positive characteristic \cite{ft}. Also, it is not clear whether we have a similar  equivalence for the parabolic (or parahoric) principal bundles considered in \cite{bp,KSZ24,HKSZ22} (for example, see the paragraph below \cite[Theorem 5.1]{bp}). For $G=\mathrm{GL}_n(\mathbb{C})$, such equivalence is confirmed in \cite{KSZ24}.
                  \item  For flat $G$-connections (or $G$-Higgs bundles, $G$-local systems) over a connected smooth quasi-projective variety, we can introduce the notion of rigidity. Roughly speaking, we call it   rigid and cohomologically rigid, respectively,  if it represents an isolated (potentially non-reduced) point in  an   appropriate moduli space and  represents a smooth isolated point in the moduli space \cite{klp,ffff}. It is not clear whether these rigid properties are equivalent to those on the adjoint flat bundles (or Higgs bundles, local systems) \cite{si}.
        \end{enumerate}
 \end{remark}

\section{Reduction for Non-regular Cases}
Here non-regularity only means that the leading index ($\Theta$-order) is greater than 1, which  may not be  the genuine irregularity in the sense of Definition \ref{d1} or Definition \ref{d2}. However, if for the formal connection $ \mathsf{A}$, there is a $\Theta$-reduced representation such that   the constant term $\mathrm{Res}_0(\hat A^{(-c)})$ of $\mathrm{Res}(\hat A^{(-c)})$  has a nonzero semisimple part with $c>1$ (i.e. the setting in the following Proposition \ref{x}), then $ \mathsf{A}$ is certainly irregular due to Corollary \ref{d4} and  Theorem \ref{dd} (also cf. \cite[Proposition 4.7]{h}). $\mathrm{Res}_0(\hat A^{(-c)})$ and $\mathrm{Res}(\hat A^{(-c)})$ are also both treated as  locally finite endomorphisms  on $\widehat{\mathfrak{p}}_\Theta$, hence the  Jordan decomposition of $\mathrm{Res}_0(\hat A^{(-c)})$ in the Lie algebra $\mathfrak{g}$ and the  Jordan decomposition of $\mathrm{Res}(\hat A^{(-c)})$ in the Lie algebra $\widehat{\mathfrak{l}}_\Theta$ are  also those with respect to the Lie algebra $\widehat{\mathfrak{p}}_\Theta$, respectively \cite{ka}.
\subsection{Non-nilpotent Leading Coefficient}
\begin{proposition}\label{x}Let $ \mathsf{A}$ be  a formal connection (or a Higgs field) on a   formal parahoric principal $G$-bundle $(\mathbf{P}, \Theta, \mathcal{P})$  with  a $\Theta$-reduced representation
\begin{align*}
 \mathsf{A}(e)=\sum\limits_{r\geq -c}\hat A^{(r)}z^{r}dz
\end{align*}   under a $\Theta$-parahoric  trivialization $e$, where $c>1$. Writing   $\mathrm{Res}(\hat A^{(-c)})=\sum\limits_{i\in \mathbb{Z}}A^{(-c)}_iz^i$, one defines $\mathrm{Res}_0(\hat A^{(-c)})=A^{(-c)}_0$ and let $S$ be the semisimple part of $\mathrm{Res}_0(\hat A^{(-c)})$. Then there is a $\Theta$-parahoric  trivialization  $e'$ such that $\mathsf{A}(e')$ has a $\Theta$-reduced  representation
  \begin{align*}
    \mathsf{A}(e')=\sum\limits_{r\geq -c}\hat B^{(r)}z^{r}dz
  \end{align*} with $\hat B^{(r)}=\sum\limits_{\lambda+i\geq 0}\sum\limits_{i\in\mathbb{Z}}X^{(r)}_{\lambda,i}z^i$.
   satisfying
    \begin{itemize}
      \item $\hat B^{(-c)}$ is given by  an $\widehat{L}_\Theta$-adjoint action on $\hat A^{(-c)}$,
      \item the semisimple part of $\mathrm{Res}_0(\hat B^{(-c)})$ is $S$,
      \item $[S,  X^{(r)}_{\lambda,i}]=0$.
    \end{itemize}
\end{proposition}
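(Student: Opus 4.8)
The plan is to obtain $e'$ from $e$ as a (generally infinite, $z$-adically convergent) composition of $\widehat{P}_\Theta$-gauge transformations that successively remove the components of the coefficients lying outside the centralizer of $S$; this is a parahoric refinement of the shearing procedure of Babbitt--Varadarajan \cite{bv}, run along the sharp data $\mathbf{S}_\Theta(\mathsf{A})$. I begin with a compatibility observation. Since $\mathrm{Res}_0(\hat A^{(-c)})=X^{(-c)}_{0,0}$ lies in the reductive subalgebra $\mathfrak{g}^{(\Theta)}_0$, whose Jordan decomposition agrees both with the one in $\mathfrak{g}$ and (by the discussion preceding the proposition) with the one in $\widehat{\mathfrak{p}}_\Theta$, we get $S\in\mathfrak{g}^{(\Theta)}_0$, hence $[S,\Theta]=0$. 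Therefore $\mathrm{ad}_S$ and $\mathrm{ad}_\Theta$ are commuting semisimple operators, and the weight decomposition refines into joint eigenspaces $\mathfrak{g}_{\lambda,\nu}=\mathfrak{g}^{(\Theta)}_\lambda\cap\ker(\mathrm{ad}_S-\nu\,\mathrm{id})$, so that $\widehat{\mathfrak{p}}_\Theta=\bigoplus_{\lambda+i\ge 0}\bigoplus_\nu\mathfrak{g}_{\lambda,\nu}z^i$, the slots with $\nu=0$ being precisely $C_{\widehat{\mathfrak{g}}}(S)\cap\widehat{\mathfrak{p}}_\Theta$. The task is to kill, by $\widehat{P}_\Theta$-gauge transformations, every component sitting in a slot with $\nu\ne 0$, allowing the leading coefficient to be changed by an $\widehat{L}_\Theta$-adjoint action.

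The engine of the reduction is a single move. Because $c>1$, the most polar part of $\mathsf{A}(e)$ is $(S+N)z^{-c}\,dz$, with $N$ the nilpotent part of $\mathrm{Res}_0(\hat A^{(-c)})$, and $\mathrm{ad}_S$ is invertible on every slot with $\nu\ne 0$. If a component $W\in\mathfrak{g}_{\lambda,\nu}$ with $\nu\ne 0$ occurs at $z$-power $m$ in $\mathsf{A}(e)$ (so that necessarily $\lambda+m+c\ge 0$, whence $\hat g_1:=\exp(\nu^{-1}W z^{m+c})\in\widehat{P}_\Theta$), one computes
\[
\widetilde{\mathrm{Ad}}_{\hat g_1}\big(\mathsf{A}(e)\big)=\mathsf{A}(e)+\big[\,\nu^{-1}W z^{m+c},\,S z^{-c}\,dz\,\big]+\cdots=\mathsf{A}(e)-W z^{m}\,dz+\cdots,
\]
so the offending component is cancelled; the correction terms ``$\cdots$'' come from the bracket with $Nz^{-c}\,dz$, the bracket with $\mathsf{A}(e)-(S+N)z^{-c}\,dz$, the differential $d\hat g_1$ (absent in the Higgs case, and of $z$-power $m+c-1$, hence subdominant as $c>1$), and the higher commutators in the exponential. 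In particular, the finitely many slots of $\hat A^{(-c)}$ with $\nu\ne 0$ are cleared by transformations $\exp(\nu^{-1}X^{(-c)}_{\lambda,\nu,i}z^{i})$; those with $\lambda+i=0$ lie in $\widehat{L}_\Theta$, and one checks --- as in Boalch's parahoric analysis \cite[Theorem~6]{b} together with the structure theory of the reductive group with Lie algebra $\mathfrak{g}^{(\Theta)}_0$ --- that the total effect on $\hat A^{(-c)}$ is an $\widehat{L}_\Theta$-adjoint action producing a $\hat B^{(-c)}$ whose residue has $S$-commuting coefficients and whose constant term still has semisimple part $S$. The remaining, less polar coefficients are then cleared slot by slot by repeated application of the same move.

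I expect the main obstacle to be the termination and convergence of this cascade: one must show that the partial products of the $\hat g_1$'s converge $z$-adically to some $\hat g\in\widehat{P}_\Theta$, and that the limit admits a $\Theta$-reduced representation with leading index $c$ and $\hat B^{(-c)}\ne 0$. The genuinely new difficulty compared with \cite{bv} is that a correction produced by bracketing $\hat g_1$ with the \emph{non-constant} part of the residue of the leading coefficient can \emph{lower} the $z$-power, so a naive induction on the $z$-power breaks down. This is resolved by running the induction alternately on the entries of $\mathbf{S}_\Theta(\mathsf{A})$ and using that the $\mathrm{ad}_\Theta$-eigenvalues are bounded above by $\lambda_m$: the $z$-powers occurring are then bounded below (by $-c-\lambda_m$), only finitely many slots live in any bounded range of $z$-powers, and a bracket that would push a term below the floor vanishes for eigenvalue reasons. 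One handles the finite-dimensional reduction of the leading coefficient first (this is where the $\widehat{L}_\Theta$-normalization and the control of the semisimple part of $\mathrm{Res}_0$ take place) and then the $z$-adically convergent cascade on the lower coefficients; passing finally to a $\Theta$-reduced representation of the output --- which preserves both the leading index and the $S$-commutativity of all coefficients --- yields $e'$ with the asserted properties. The same argument applies verbatim to formal Higgs fields, the term $d\hat g_1$ simply being absent.
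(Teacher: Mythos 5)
Your overall strategy---clearing the components outside $C_{\mathfrak{g}}(S)$ by successive exponential gauge transformations in $\widehat{P}_\Theta$, organized along the sharp data---is the same as the paper's, but your central ``single move'' has a genuine gap. Write $\mathrm{Res}_0(\hat A^{(-c)})=S+N$ with $N$ nilpotent and $[S,N]=0$. Your transformation $\hat g_1=\exp(\nu^{-1}Wz^{m+c})$ produces, besides the first-order term $[\nu^{-1}Wz^{m+c},Sz^{-c}]\,dz=-Wz^{m}\,dz$, also the term $[\nu^{-1}Wz^{m+c},Nz^{-c}]\,dz=\nu^{-1}[W,N]z^{m}\,dz$. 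Since $N$ commutes with both $S$ and $\Theta$, the operator $\mathrm{ad}_N$ preserves the slot $\mathfrak{g}_{\lambda,\nu}$, so this correction sits at the \emph{same} $z$-power $m$ and in the \emph{same} nonzero $\mathrm{ad}_S$-eigenspace as the component you claim to have cancelled: it is not subdominant and is not disposed of by your ``$\cdots$''. Consequently the cascade as described need not make progress, and your termination discussion (floor on the $z$-powers, finitely many slots in a bounded range, alternating induction on $\mathbf{S}_\Theta(\mathsf{A})$) addresses the lowering of $z$-powers by the non-constant part of the residue but not this same-slot reintroduction. The paper avoids the problem by never inverting $\mathrm{ad}_S$ alone: the gauge parameter is determined by the linear equations \eqref{va}, \eqref{2} of the form $[S,\,Z+[U,\mathrm{Res}_0(\hat A^{(-c)})]]=0$ (or with $\mathrm{Res}(\hat A^{(-c)})$), which are solvable because on each $\rho\neq 0$ eigenspace of $\mathrm{ad}_S$ the operator $\mathrm{ad}_{S+N}$ acts as $\rho\cdot\mathrm{id}$ plus a nilpotent, hence invertibly. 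Your argument could be repaired either by adopting this equation or by iterating your move and invoking nilpotency of $\mathrm{ad}_N$ on each slot, but some such step is indispensable.

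A secondary point: the first two bullets of the conclusion (that $\hat B^{(-c)}$ is obtained from $\hat A^{(-c)}$ by an $\widehat{L}_\Theta$-adjoint action and that the semisimple part of $\mathrm{Res}_0(\hat B^{(-c)})$ is still $S$) are only asserted in your sketch (``one checks, as in Boalch's analysis''), and your prescription in fact clears the slots of $\hat A^{(-c)}$ with $\lambda+i>0$ by elements of $\widehat{U}_\Theta$ acting at order $z^{-c}$, which is not an $\widehat{L}_\Theta$-action on the leading coefficient. The paper secures this bookkeeping structurally: the residue of the leading coefficient is first normalized by gauge transformations lying in $\widehat{L}_\Theta$ only (Lemma \ref{mn}), and all subsequent transformations are of the form $\exp(z^{k+1}\hat U)$ or $\exp(\hat U[l_{\mu}])$ with $l_\mu>0$, which by Proposition \ref{1} and the grading $[\widehat{\mathfrak{p}}_\Theta[l],\widehat{\mathfrak{p}}_\Theta[l']]\subset\widehat{\mathfrak{p}}_\Theta[l+l']$ leave $\mathrm{Res}(\hat B^{(-c)})$ untouched. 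You need an analogous separation (or an explicit verification) to obtain these two bullets.
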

\begin{proof}
Since $c>1$, for the  gauge transformation  $\hat g\in \widehat{P}_\Theta$ in our consideration, $\hat gd {\hat g}^{-1} $ does not affect our work, so we only need to consider the $\mathrm{Ad}_{\hat g}$-gauge transformation in the following calculations.

\vspace{0.5em}

\verb"Step 1": We prove the following lemma.
\begin{lemma} \label{mn}For the $\Theta$-reduced representation
$
 \mathsf{A}(e)=\sum\limits_{r\geq -c}\hat A^{(r)}z^{r}dz
$,  let  $S$ be the  semisimple part of  $\mathrm{Res}_0(\hat A^{(-c)})$. Then  there is $\hat g\in \widehat{L}_\Theta$ such that $\mathsf{A}(\hat ge)$ has a $\Theta$-reduced representation
\begin{align*}
  \mathsf{A}(\hat ge)=\sum\limits_{r\geq -c}\hat B^{(r)}z^{r}dz
\end{align*}
satisfying
\begin{itemize}
  \item the semisimple part of $\mathrm{Res}_0(\hat B^{(-c)})$ is exactly $S$,
  \item $[S,\mathrm{Res}(\hat B^{(-c)})]=0$, namely $[S,B^{(-c)}_i]=0$ for $\mathrm{Res}(\hat B^{(-c)})=\sum\limits_{i\in\mathbb{Z}}B^{(-c)}_iz^i$.
\end{itemize}
\end{lemma}
\begin{proof} Write $\mathrm{Res}(\hat A^{(-c)})=\sum\limits_{i\in \mathbb{Z}}A_iz^i$, then $[\Theta,A_i]=-iA_i$. Define the finite dimensional space
 \begin{align*}
   V_i=\{Y\in \mathfrak{g}:[\Theta, Y]=-iY\},
 \end{align*}
  which is preserved by the $\mathrm{ad}_{X_0}$-action.
Let $a_0=0$, and let  $a_1<a_2< \cdots$ be the nonzero integer eigenvalues of $\Theta$.
We can prove the lemma by the induction for $a_\mu$. Suppose  we have the gauge transformation $\hat g_k=\prod\limits_{l=0}^k\exp(z^{a_l}Y^{(a_l)})\in \widehat{L}_\Theta $ with $Y^{(a_k)}\in V_{a_k}$ such that $\mathsf{A}( \hat g_ke)=\sum\limits_{r\geq -c}\hat B^{(r)}z^{r}dz$ satisfies  that the semisimple part of $\mathrm{Res}_0(\hat B^{(-c)})$ is $S$ and $[S,  B_i]=0$ for $i\in\{0,a_1,\cdots, a_k\}$, where $\mathrm{Res}(\hat B^{(-c)})=\sum\limits_{i\in\mathbb{Z}}B_iz^i$. Next we consider the gauge transformation $\hat l=\exp(z^{a_{k+1}} Y^{(a_k+1)})\in \widehat{L}_\Theta$, where $Y^{(a_k+1)}\in V_{a_{k+1}}$ is subject to the following equation
\begin{align}\label{va}
  [S,B_{a_{k+1}}+[Y^{(a_k+1)}, B_0] ]=0.
\end{align}
The space  $V_{a_{k+1}}$ has a decomposition $V_{a_{k+1}}=\bigoplus V^{(\rho)}_{a_{k+1}}$, where $V^{(\rho)}_{a_{k+1}}$ is the eigenspace of $\mathrm{ad}_S$-action with the eigenvalue $\rho$. Choose a basis $\{v^{(\rho)}_1,\cdots, v^{(\rho)}_{s_\rho}\}$ for $V^{(\rho)}_{a_{k+1}}$, and write
\begin{align*}
B_{a_{k+1}}&=\sum\limits_{\rho}\sum\limits_{\mu=1}^{s_\rho}w^{(\rho)}_\mu v^{(\rho)}_\mu,\\
 Y_{a_{k+1}}&=\sum\limits_{\rho\neq0}\sum\limits_{\mu=1}^{s_\rho}t^{(\rho)}_\mu v^{(\rho)}_\mu.
\end{align*}
Since $[S,B_0]=0$, the space  $V^{(\rho)}_{a_{k+1}}$ is preserved by the $\mathrm{ad}_{B_0}$-action, hence we write\begin{align*}
                                                                                                                    [B_0, v^{(\rho)}_\mu]=\sum\limits_{\alpha=1}^{s_\rho} b^{(\rho)}_{\mu,\alpha} v^{(\rho)}_\alpha.
                                                                                                                   \end{align*}
Note that the  coefficients $b^{(\rho)}_{\mu,\mu}$ must not  vanish if $\rho\neq 0$. Consequently, the equation \eqref{va} reduces to the equation
\begin{align*}
 w^{(\rho)}_\mu + \sum\limits_{\delta=1}^{s_\rho}t^{(\rho)}_\delta b^{(\rho)}_{\delta,\mu} =0
\end{align*}
for $t^{(\rho)}_\delta, \rho\neq 0$. They   obviously admit the solutions given by
\begin{align*}
  t^{(\rho)}_\delta=\left\{
                   \begin{array}{ll}
                     0, & \hbox{$\delta\neq \mu$;} \\
                    -\frac{w^{(\rho)}_\mu}{b^{(\rho)}_{\mu,\mu}} , & \hbox{$\delta=\mu$.}
                   \end{array}
                 \right.
\end{align*}
Therefore, $\mathsf{A}(\hat l\hat g_k e)=\sum\limits_{r\geq -c}\hat C^{(r)}z^{r}dz$ satisfies  the desired properties for $\mathrm{Res}(\hat C^{(-c)})$. We complete the induction.
\end{proof}

\vspace{0.5em}

\verb"Step 2": We prove the following lemma.
\begin{lemma}For the $\Theta$-reduced representation $\mathsf{A}(e)=\sum\limits_{r\geq -c}\hat A^{(r)}z^{r}dz$, let  $S$ be the  semisimple part of  $\mathrm{Res}_0(\hat A^{(-c)})$. Then
  there is a $\Theta$-parahoric  trivialization  $e'$ such that $\mathsf{A}(e')$ has a $\Theta$-reduced  representation  \begin{align*}
 \mathsf{A}(e')=\sum\limits_{r\geq -c}\hat B^{(r)}z^{r}dz
\end{align*} with satisfying
    \begin{itemize}
      \item $\hat B^{(-c)}$ is given by  an $\widehat{L}_\Theta$-adjoint action on $\hat A^{(-c)}$,
      \item the semisimple part of $\mathrm{Res}_0(\hat B^{(-c)})$ is $S$,
      \item $[S,\mathrm{Res}(\hat B^{(r)})]=0$, namely $[S,B^{(r)}_i]=0$ for $\mathrm{Res}(\hat B^{(r)})=\sum\limits_{i\in\mathbb{Z}}B^{(r)}_iz^i$.
    \end{itemize}
\end{lemma}
\begin{proof}
We show the lemma by induction on $r$. The initial step of the induction process has been done in Step 1.  Suppose we have the gauge transformation $\hat g_k=\prod\limits_{l=0}^k\exp(z^{l}\hat U^{(l)})\in \widehat{P}_\Theta $ with $\hat  U^{(l)}\in\widehat{\mathfrak{p}}_\Theta$ such that $\mathsf{A}( \hat g_k e)=\sum\limits_{r\geq -c}\hat B^{(r)}z^{r}dz$ satisfies that
  $\hat B^{(-c)}$ is given by  an $\widehat{L}_\Theta$-adjoint action on $\hat A^{(-c)}$,
 the semisimple part of $\mathrm{Res}_0(\hat B^{(-c)})$ is $S$ and $[S,  \mathrm{Res}(\hat B^{(r)})]=0$ for $-c\leq r\leq -c+k$. Next we consider $\hat u=\exp(z^{k+1}\hat U^{(k+1)})\in  \widehat{U}_\Theta $ with $\hat  U^{(k+1)}\in\widehat{\mathfrak{l}}_\Theta$, then
$
 \mathsf{A}(\hat u\hat g_k e)
$
has a representation
\begin{align*}
&\mathsf{A}(\hat u\hat g_k e)\\
=&\ \frac{\hat B^{(-c)}}{z^c}dz+\frac{ \mathrm{Ad}_{\hat u}(\hat B^{(-c+1)})}{z^{c-1}}dz+\cdots +\frac{ \mathrm{Ad}_{\hat u}( \hat B^{(-c+k)})}{z^{c-k}}dz+\frac{ \mathrm{Ad}_{\hat u}(\hat B^{(-c+k+1)})+[\hat U^{(k+1)},\hat A^{(-c)} ]}{z^{c-k-1}}dz+\cdots\\
=& \ \sum\limits_{r= -c}^{-c+k+1}\hat C^{(r)}z^{r}dz+\cdots .
\end{align*}
By Proposition \ref{1}, we have
\begin{align*}
  \mathrm{Res}(\hat C^{(r)})=\left\{
                               \begin{array}{ll}
                                \mathrm{Res}(\hat B^{(r)}), & \hbox{$r\leq -c+k$;} \\
                             \mathrm{Res}(\hat B^{(-c+k+1)}) +[\hat U^{(k+1)}, \mathrm{Res}(\hat  A^{(-c)})], & \hbox{$r=-c+k+1$.}
                               \end{array}
                             \right.
\end{align*}
Therefore, the gauge transformation $\hat u$ is determined by the equation
\begin{align}\label{2}
  [S,\mathrm{Res}(\hat B^{(-c+k+1)}) +[\hat U^{(k+1)}, \mathrm{Res}(\hat  A^{(-c)})]]=0.
\end{align}
of $\hat U^{(k+1)}$.
Since $[S,  \mathrm{Res}(\hat  A^{(-c)})]=0$, we can apply similar arguments for solving the equation \eqref {va} to find the solution for the above equation, thus we construct the gauge transformation $\hat g_{k+1}=\hat u\hat g_{k}$.
\end{proof}

\verb"Step 3": We proof the following lemma.
\begin{lemma}
For the simplest $\Theta$-reduced representation $\mathsf{A}(e)=\hat X\frac{dz}{z^c}$, let $S$ be the semisimple part of  $\mathrm{Res}_0(\hat X)$. Then there is a $\Theta$-parahoric  trivialization  $e'$ such that the  simplest $\Theta$-reduced representation $\mathsf{A}(e')=\hat Y\frac{dz}{z^c}$ with $\hat Y=\sum\limits_{\lambda+i\geq 0}\sum\limits_{i\in\mathbb{Z}}Y_{\lambda,i}z^i$ satisfies
  \begin{itemize}
      \item $\mathrm{Res}(\hat Y)$ is given by  an $\widehat{L}_\Theta$-adjoint action on $\mathrm{Res}(\hat X)$,
      \item the semisimple part of $\mathrm{Res}(\hat Y)$ is $S$,
      \item $[S,Y_{\lambda,i}]=0$.
    \end{itemize}
   \end{lemma}
  \begin{proof}  We write
  \begin{align*}
   \hat X=\sum\limits _{0=l_1<l_2<\cdots}\hat X[l_\mu]
  \end{align*}
for $\hat X[l_\mu]\in \widehat{\mathfrak{p}}_\Theta[l_\mu]$. Note that  the  spaces $\widehat{\mathfrak{p}}_\Theta[l_\mu]$ satisfy  $[\widehat{\mathfrak{p}}_\Theta[l_1], \widehat{\mathfrak{p}}_\Theta[l_\mu]]\subset \widehat{\mathfrak{p}}_\Theta[l_\mu]$. We  can  prove the lemma by  induction on $\mu$. The initial step of the induction process has also been done in Step 1.
  The first $(\mu+1)$-times gauge transformation is chosen as $\exp(\hat U[l_{\mu+1}])$ with $\hat U[l_{\mu+1}]\in \widehat{\mathfrak{p}}_\Theta[l_{\mu+1}]$,  which is determined by the  equation
 \begin{align}
   [S, \hat Z[l_{\mu+1}]+[\hat U[l_{\mu+1}],\mathrm{Res}(\hat X)]]=0
 \end{align}
 of $\hat U[l_{\mu+1}]$ for some  $\hat Z[l_{\mu+1}]\in\widehat{\mathfrak{p}}_\Theta[l_{\mu+1}] $. Similarly, it admits the solution.
  \end{proof}

  \verb"Step 4": One writes
  \begin{align*}
   \hat A^{(r)}=\sum\limits _{0=l_1<l_2<\cdots}\hat A^{(r)}[l_\mu]
  \end{align*}
 for $\hat A^{(r)}[l_\mu]\in \widehat{\mathfrak{p}}_\Theta[l_\mu]$,  then we can apply alternately the inductions in Step 2 and Step 3 for the pair $(k,\mu)$. Note that when we go to the $(k+1,\mu)$-step from $(k,\mu)$-step, the gauge transformation should be chosen as the form $\exp(z^{k+1}\hat U[l_{\mu}])$ for $\hat U[l_{\mu}]\in \widehat{\mathfrak{p}}_\Theta[l_{\mu}]$. And the new gauge transformations do not affect the inducted terms.

We complete the proof.
\end{proof}

A generalization   of Proposition \ref{x} is as follows.
\begin{proposition}\label{xxx}Let $ \mathsf{A}$ be  a formal connection (or a Higgs field) on a   formal parahoric principal $G$-bundle $(\mathbf{P}, \Theta, \mathcal{P})$  with  a $\Theta$-reduced representation
\begin{align*}
 \mathsf{A}(e)=\sum\limits_{r\geq -c}\hat A^{(r)}z^{r}dz
\end{align*}  under a $\Theta$-parahoric  trivialization $e$, where $c>1$. Write  $\hat A^{(-c)}=\sum\limits _{l_1<l_2<\cdots}\hat A^{(-c)}[l_\mu]$ for some non-negative integer $l_1$, where $\hat A^{(-c)}[l_1]$ is nonzero, and  write $\hat A^{(-c)}[l_1]=\sum\limits_{i\in \mathbb{Z}}A^{(-c)}[l_1]_iz^i$, denote the semisimple part of  $A^{(-c)}[l_1]_{l_1}$ by $S$. Assume  $S=\sum\limits_{i=1}^NS_i$, where $S_i$' are semisimple elements in $\mathfrak{g}$ satisfying
 \begin{itemize}
   \item $[S_i,S_j]=0$,
   \item $[\Theta, S_i]=0$.
 \end{itemize}
 Then there is a $\Theta$-parahoric  trivialization  $e'$ such that $\mathsf{A}(e')$ has a $\Theta$-reduced  representation
  \begin{align*}
    \mathsf{A}(e')=\sum\limits_{r\geq -c}\hat B^{(r)}z^{r}dz
  \end{align*} with $\hat B^{(r)}=\sum\limits_{\lambda+i\geq l_1}\sum\limits_{i\in\mathbb{Z}}X^{(r)}_{\lambda,i}z^i$
   satisfying
    \begin{itemize}
      \item $\hat B^{(-c)}$ is given by  an $\widehat{L}_\Theta$-adjoint action on $\hat A^{(-c)}$,
      \item  the semisimple part of $B^{(-c)}[l_1]_{l_1}$ is $S$,
      \item  $ X^{(r)}_{\lambda,i}\in C_\mathfrak{g}(S_1,\cdots,S_N)=\{X\in\mathfrak{g}:[X,S_1]=\cdots=[X,S_N]=0\}$.
    \end{itemize}
\end{proposition}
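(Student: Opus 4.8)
The plan is to rerun the four-step reduction of Proposition \ref{x}, modified in two ways: the $\widehat{\mathfrak{p}}_\Theta$-grading is shifted so that the role of ``grade $0$'' is played by ``grade $l_1$'', and the single semisimple element is replaced by the commuting family $S_1,\dots,S_N$. I would begin with a normalization: since $S_1,\dots,S_N$ pairwise commute and commute with $\Theta$, the set $\{\Theta,S_1,\dots,S_N\}$ lies in a common Cartan subalgebra, so after one global conjugation we may assume every $S_i\in\mathfrak{t}$. Then $\mathfrak{c}:=C_\mathfrak{g}(S_1,\dots,S_N)$ is a Levi subalgebra containing $\mathfrak{t}$; it is $\mathrm{ad}_\Theta$-stable, it decomposes as the sum of its intersections with the homogeneous pieces $\mathfrak{g}^{(\Theta)}_\lambda$, and $\widehat{P}_\Theta\cap C_G(S_1,\dots,S_N)(\mathbb{K})$ is the $\Theta$-parahoric subgroup of $C_G(S_1,\dots,S_N)(\mathbb{K})$ (and a $\Theta$-parahoric trivialization of $\mathbf{P}$ restricts to one for the reduced structure). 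These compatibilities are exactly what let the ensuing bracket computations stay inside the subgroup, so that the coefficients of the output land in $\mathfrak{c}$, which is the content of the last bullet.

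For the grade shift, note that because $c>1$ the term $\hat{g}\,d\hat{g}^{-1}$ is irrelevant, just as in Proposition \ref{x}, and one may carry out Steps 1--4 after the substitutions: $\mathrm{Res}(\cdot)$ (the grade-$0$ part) becomes the grade-$l_1$ part; $\mathrm{Res}_0(\cdot)$ becomes the $z^{l_1}$-coefficient of the grade-$l_1$ part, which lies in $C_\mathfrak{g}(\Theta)$ and, for the input, has semisimple part $S$; and the subspaces $\widehat{\mathfrak{p}}_\Theta[l]$ are used only for $l\ge l_1$, the graded relation $[\widehat{\mathfrak{p}}_\Theta[l_1],\widehat{\mathfrak{p}}_\Theta[l]]\subset\widehat{\mathfrak{p}}_\Theta[l_1+l]$ being unchanged. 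The genuinely new feature is the semisimple bookkeeping. In Proposition \ref{x}, each step amounts to solving a linear equation $W+[\hat{U},X_0]\in\ker(\mathrm{ad}_S)$ for the gauge generator $\hat{U}$, with $X_0$ a piece of the leading coefficient; solvability is because $\mathrm{ad}_{X_0}$ preserves every $\mathrm{ad}_S$-eigenspace and is invertible (a nonzero scalar plus a nilpotent) on the ones with nonzero eigenvalue. Here one instead decomposes the relevant finite-dimensional spaces into \emph{simultaneous} $(\mathrm{ad}_{S_1},\dots,\mathrm{ad}_{S_N})$-eigenspaces $V=\bigoplus_{\vec{\rho}}V^{(\vec{\rho})}$, takes the target to be $V^{(\vec{0})}=V\cap C_\mathfrak{g}(S_1,\dots,S_N)$, and solves the analogous equation block by block over the $\vec{\rho}\ne\vec{0}$ summands. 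The first bullet of the conclusion says the leading coefficient is only altered by an $\widehat{L}_\Theta$-adjoint action, which leaves its semisimple part $S$ fixed (so the second bullet persists); one would first use such an action to arrange the nilpotent part of the leading coefficient to commute with each $S_i$, so that $\mathrm{ad}_{X_0}$ truly preserves each $V^{(\vec{\rho})}$, and then the alternating induction over the $z$-order index $k$ and the grade index $\mu$, as in Steps 2--4 of Proposition \ref{x}, produces the desired trivialization $e'$.

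I expect the main obstacle to be the block-by-block solvability on the summands $V^{(\vec{\rho})}$ with $\vec{\rho}\ne\vec{0}$. In the one-semisimple situation every nonzero eigenvalue is ``good'', which is precisely why $\mathrm{ad}_{X_0}$ is invertible there; but in the multi-semisimple situation $\mathrm{ad}_S$ acts on $V^{(\vec{\rho})}$ by the scalar $\sum_i\rho_i$, and this can vanish even when $\vec{\rho}\ne\vec{0}$, so on those particular blocks $\mathrm{ad}_{X_0}$ reduces to the nilpotent $\mathrm{ad}_{n_0}$ and naive inversion fails. Getting around this is where the real content of the argument will sit: one has to exploit both the hypotheses on the $S_i$ and the special shape of the leading coefficient in the situations where the proposition is actually applied --- in particular that, after the preliminary $\widehat{L}_\Theta$-reduction, the leading coefficient equals $S$ plus a nilpotent lying in $C_\mathfrak{g}(S_1,\dots,S_N)$ --- to show that the offending components of $W$ either vanish or lie in the image of $\mathrm{ad}_{n_0}$ on those blocks, after which the iteration closes. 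Verifying this, together with the subgroup-compatibility statements from the first paragraph, is the technical heart of the proof.
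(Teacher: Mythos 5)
Your set-up (putting $\Theta,S_1,\dots,S_N$ in a common Cartan, shifting the grading so that $l_1$ plays the role of $0$, and checking that the centralizer $C_\mathfrak{g}(S_1,\dots,S_N)$ is $\mathrm{ad}_\Theta$-stable) is unproblematic, but the proof itself stops at the decisive point. You reduce everything to solving $W+[\hat U,X_0]\in\ker$ block by block on the simultaneous eigenspaces $V^{(\vec\rho)}$, and you yourself observe that on blocks with $\vec\rho\neq\vec 0$ but $\sum_i\rho_i=0$ the operator $\mathrm{ad}_{X_0}$ degenerates to a nilpotent, so the inversion that drives Proposition \ref{x} is unavailable there; you then defer exactly this issue as ``the technical heart'' without supplying any argument. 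That deferred step is not a routine verification: it is precisely the difference between the statement to be proved and Proposition \ref{x}. If you only invert on blocks where the $\mathrm{ad}_S$-eigenvalue is nonzero, your iteration places the coefficients in $C_\mathfrak{g}(S)$, which in general strictly contains $C_\mathfrak{g}(S_1,\dots,S_N)$ (already for two commuting diagonal $S_1,S_2$ whose sum has a repeated eigenvalue), so nothing in your argument produces the commutation with the individual $S_i$. The appeal to ``the special shape of the leading coefficient in the situations where the proposition is applied'' is also not something you can invoke: the proposition is stated for an arbitrary $\Theta$-reduced representation whose relevant constant term has semisimple part $S$, and its nilpotent part need not lie in $C_\mathfrak{g}(S_1,\dots,S_N)$, nor need it preserve your $\vec\rho$-blocks (it commutes with $S$, not with each $S_i$). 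So, as written, the proposal is a plan with the crucial lemma missing.

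For comparison, the paper does not attempt a simultaneous $(\mathrm{ad}_{S_1},\dots,\mathrm{ad}_{S_N})$-decomposition at all. It proceeds one $S_i$ at a time: first it runs the argument of Proposition \ref{x} with $\mathrm{Res}_0(\hat A^{(-c)})$ replaced by $A^{(-c)}[l_1]_{l_1}$ so as to make all coefficients commute with $S_1$; it then reduces the structure group to the connected reductive subgroup $G_1$ with Lie algebra $\mathfrak{g}_1=C_\mathfrak{g}(S_1)$, noting that $\Theta$ lies in $\mathfrak{g}_1$ and $S_1$ becomes central there (so the parahoric framework descends and $S_1$ can thereafter be ignored), and repeats the procedure with $S_2$ inside $G_1$, iterating until $C_\mathfrak{g}(S_1,\dots,S_N)$ is reached. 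This successive reduction of $G$ to centralizer subgroups is the organizing device your all-at-once scheme lacks. To complete your write-up you would either have to prove the missing solvability statement on the degenerate blocks (which, for the reason above, I do not see how to do in the stated generality), or restructure the argument as an induction on $i$ with an actual reduction of the structure group at each stage, as the paper does.
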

\begin{proof} Firstly, we can prove after a suitable gauge transformation, all $ X^{(r)}_{\lambda,i}$  commute with $S_1$. Note that we are working on a $\Theta$-reduced representation, the proof is just replacing $\mathrm{Res}_0(\hat A^{(-c)})$ by $A^{(-c)}[l_1]_{l_1}$.  Hence, one can reduce the group $G$ to the connected reductive subgroup $G_1$ whose Lie algebra is exactly the centralizer $\mathfrak{g}_1=C_\mathfrak{g}(S_1)$ of $S_1$ in $\mathfrak{g}$. Note that $\Theta\in G_1$ and $S_1$ lie in the center of $G_1$. Then we can ignore $S_1$ and make all $ X^{(r)}_{\lambda,i}$ to  commute with $S_1, S_2$, thus $G_1$ is further reduced to the  connected reductive subgroup $G_2$ whose Lie algebra is exactly the centralizer $\mathfrak{g}_2=C_{\mathfrak{g_1}}(S_2)=C_{\mathfrak{g}}(S_1,S_2)$ of $S_2$ in $\mathfrak{g}_1$. Iterating this process proves the proposition.
\end{proof}

\begin{theorem}\label{pq}Let $ \mathsf{A}$ be a formal connection (or a Higgs field) on a   formal parahoric principal $G$-bundle $(\mathbf{P}, \Theta, \mathcal{P})$  with  a $\Theta$-reduced representation
\begin{align*}
 \mathsf{A}(e)=\sum\limits_{r\geq -c}\hat A^{(r)}z^{r}dz
\end{align*}   under a $\Theta$-parahoric  trivialization $e$, where $c>1$ and $\mathrm{Res}_0(\hat A^{(-c)})$ has a nonzero  semisimple part $S$.
Assume  $S=\sum\limits_{i=1}^NS_i$, where $S_i$' are semisimple elements in $\mathfrak{g}$ satisfying
 \begin{itemize}
   \item $[S_i,S_j]=0$,
   \item $[\Theta, S_i]=0$,
   \item $C_\mathfrak{g}(S_1,\cdots,S_N)$ is a Cartan subalgebra $\mathfrak{t}'$ of $\mathfrak{g}$.
 \end{itemize}
Then there is a $\Theta$-parahoric  trivialization  $e'$ such that $\mathsf{A}(e')$ has a $\Theta$-reduced  representation
  \begin{align*}
    \mathsf{A}(e')=\sum\limits_{r\geq -c} B^{(r)}z^{r}dz
  \end{align*}satisfying
  \begin{itemize}
    \item $B^{(-c)}=S$,
    \item$ B^{(r)}\in \mathfrak{t}'$.
  \end{itemize}
\end{theorem}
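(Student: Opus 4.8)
The plan is to deduce the statement directly from Proposition \ref{xxx}, the extra hypothesis that $C_\mathfrak{g}(S_1,\ldots,S_N)=\mathfrak{t}'$ is a Cartan subalgebra being used only to collapse the filtered structure left over from that proposition. First I would record two elementary consequences of the hypotheses. Since $[\Theta,S_i]=0$ for every $i$, we get $\Theta\in C_\mathfrak{g}(S_1,\ldots,S_N)=\mathfrak{t}'$, and because a Cartan subalgebra is abelian this forces $\mathrm{ad}_\Theta$ to vanish identically on $\mathfrak{t}'$; in particular $\mathfrak{t}'\cap\mathfrak{g}^{(\Theta)}_\lambda=0$ for every $\lambda\neq 0$. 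Secondly, $S=\sum_i S_i$ commutes with each $S_j$, so $S\in\mathfrak{t}'$, and since every element of a Cartan subalgebra is semisimple, $\mathfrak{t}'$ contains no nonzero nilpotent element.

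Next I would apply Proposition \ref{xxx}. Because $\mathrm{Res}_0(\hat A^{(-c)})$ has nonzero semisimple part it is itself nonzero, so in the notation there one has $l_1=0$, $\hat A^{(-c)}[l_1]=\mathrm{Res}(\hat A^{(-c)})$ and $\hat A^{(-c)}[l_1]_{l_1}=\mathrm{Res}_0(\hat A^{(-c)})$, whose semisimple part is $S=\sum_i S_i$; thus the hypotheses of Proposition \ref{xxx} are satisfied. It produces a $\Theta$-parahoric trivialization $e'$ and a $\Theta$-reduced representation $\mathsf{A}(e')=\sum_{r\geq -c}\hat B^{(r)}z^{r}dz$ with $\hat B^{(r)}=\sum_{\lambda+i\geq 0}\sum_i X^{(r)}_{\lambda,i}z^i$, all $X^{(r)}_{\lambda,i}\in\mathfrak{t}'$, and the semisimple part of $\mathrm{Res}_0(\hat B^{(-c)})=X^{(-c)}_{0,0}$ equal to $S$. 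By the first observation, $X^{(r)}_{\lambda,i}\in\mathfrak{t}'\cap\mathfrak{g}^{(\Theta)}_\lambda$ can be nonzero only when $\lambda=0$, so each $\hat B^{(r)}=\sum_{i\geq 0}X^{(r)}_{0,i}z^i$ is an ordinary $\mathfrak{t}'$-valued power series in $z$; and since $X^{(-c)}_{0,0}\in\mathfrak{t}'$ is semisimple it equals its own semisimple part, i.e. $X^{(-c)}_{0,0}=S$.

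To conclude I would simply re-expand by total $z$-degree: $\mathsf{A}(e')=\sum_{k\geq -c}B^{(k)}z^{k}dz$ with $B^{(k)}:=\sum_{r+i=k,\ r\geq -c,\ i\geq 0}X^{(r)}_{0,i}\in\mathfrak{t}'$ (a finite sum for each $k$), and $B^{(-c)}=X^{(-c)}_{0,0}=S\neq 0$. The one point that then needs checking is that this re-expansion is still a $\Theta$-reduced representation: $B^{(-c)}=S\neq 0$ handles the first condition; a nonzero constant $B^{(k)}\in\mathfrak{t}'$ is not of the form $z\hat Y$, handling the second; and since every surviving summand lies in $\widehat{\mathfrak{p}}_\Theta[0]$, all of $\mathbf{S}_\Theta(\mathsf{A})$ sits at level $l=0$, so the maximality condition on the $(-c)$-term is automatic. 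Relabelling the $B^{(k)}$ gives exactly the asserted form; the same argument works verbatim in the Higgs case since Proposition \ref{xxx} covers it. There is no serious obstacle here beyond what is already contained in Propositions \ref{x} and \ref{xxx}; the only step requiring any care is verifying the three conditions of $\Theta$-reducedness for the degree-re-expanded form, which, as indicated, boils down to the remark that the Cartan hypothesis pins every sharp datum at level $0$.
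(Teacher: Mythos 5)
Your proposal is correct and follows essentially the same route as the paper: apply Proposition \ref{xxx} (with $l_1=0$), then use $\Theta\in\mathfrak{t}'$ and the semisimplicity of elements of the Cartan subalgebra to force all coefficients into $\mathfrak{g}^{(\Theta)}_0$ with non-negative $z$-powers, after which the constant-coefficient re-expansion with $B^{(-c)}=S$ is immediate. Your extra verification of the $\Theta$-reducedness of the re-expanded form is just an elaboration of the paper's closing "the theorem immediately follows."
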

\begin{proof}It follows from Proposition \ref{xxx} that  we can find a $\Theta$-parahoric  trivialization  $e'$ such that $\mathsf{A}(e')$ has a $\Theta$-reduced  representation
  \begin{align*}
    \mathsf{A}(e')=\sum\limits_{r\geq -c} \hat B^{(r)}z^{r}dz
  \end{align*} with $\hat B^{(r)}=\sum\limits_{i\in\mathbb{Z}} B^{(r)}_iz^i$ satisfying
 \begin{itemize}
    \item $B^{(-c)}_0=S$,
    \item $ B^{(r)}_i\in C_\mathfrak{g}(S_1,\cdots,S_N)$.
  \end{itemize}
Now since $\sum\limits_{i\in\mathbb{Z}} B^{(r)}_iz^i$ lies in $\widehat{\mathfrak{p}}_\Theta$ and $B^{(r)}_i$ are semisimple elements in $\mathfrak{g}$, the index $i$ of the  nonzero component $B^{(r)}_i$ must be non-negative. The theorem immediately  follows.
\end{proof}

\begin{definition}Let $ \mathsf{A}$ be  a formal connection (or a Higgs field) on a formal  principal $G$-bundle $\mathbf{P}$. We call $ \mathsf{A}$  relatively regular if we have one of the followings 
\begin{itemize}
  \item $ \mathsf{A}$ is (extrinsically  and intrinsically) regular,
  \item $\mathbf{P}$ is endowed with a $\Theta$-parahoric structure ($\Theta\in \mathfrak{t}_\mathbb{R}$)  such that under some  $\Theta$-parahoric  trivialization $e$, $ \mathsf{A}$ has a $\Theta$-reduced representation
\begin{align*}
 \mathsf{A}(e)=\sum\limits_{r\geq -c}\hat A^{(r)}z^{r}dz
\end{align*} with $A^{(r)}=\sum\limits_{\lambda+i\geq 0}\sum\limits_{i\in\mathbb{Z}}A^{(r)}_{\lambda,i}z^i$ satisfying 
\begin{itemize}
  \item $c> 1$;
  \item $A^{(r)}_{0,i}$ is a nilpotent element in $\mathfrak{g}$ when $-c\leq r<-1$ and $0\leq i<r$.
\end{itemize}
\end{itemize}

\end{definition} 
 
\begin{corollary}\label{zw}
Let $ \mathsf{A}$ be a formal connection on a  principal $G$-bundle $\mathbf{P}$, then  $ \mathsf{A}$ is relatively regular if and only if $\mathbf{P}$ is endowed with a $\Theta$-parahoric structure ($\Theta\in \mathfrak{t}_\mathbb{R}$) and  there are a formal connection $ \mathsf{B}$  on  $\mathbf{P}$, two $\Theta$-parahoric trivializations $e,e'$ of  $\mathbf{P}$ such that
                       \begin{align*}
                          \mathsf{B}(e)&=\hat Q+\mathsf{A}(e),\\
                           \mathsf{B}(e')&=\hat Q+\hat R,
                       \end{align*}
                       where $\hat Q=\sum\limits_{r=-c\leq-2}^{-2}Q_rz^r$ for $Q_r\in \mathfrak{t}$ being a regular semisimple element, $\hat R=\sum\limits_{r\geq -1}R_rz^r$ for $R_r\in \mathfrak{t}$.
               \end{corollary}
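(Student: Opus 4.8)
I would derive the equivalence from Theorem \ref{pq} (applied with $N=1$, the semisimple element being a regular semisimple $Q\in\mathfrak{t}$, so that $C_\mathfrak{g}(Q)=\mathfrak{t}$) together with Corollary \ref{d4}. The role of $\hat Q$ is to force the leading term of the twisted connection $\mathsf{B}=\hat Q+\mathsf{A}$ to be regular semisimple; because the polar behaviour of $\mathsf{A}$ itself is only ``naively'' irregular --- nilpotent Levi coefficients at the singular orders --- the non-nilpotent reduction theory should then carry $\mathsf{B}$ to the $\mathfrak{t}$-valued canonical form $\hat Q+\hat R$ while the polar part stays equal to the added $\hat Q$.

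\textbf{Only if, regular case.} If $\mathsf{A}$ is regular, Corollary \ref{d4} gives a $\Theta$-parahoric structure with $\Theta\in\mathfrak{t}_\mathbb{R}$ and a $\Theta$-parahoric trivialization $e$ with $\mathsf{A}(e)=\hat X\frac{dz}{z}$, $\hat X\in\widehat{\mathfrak{p}}_\Theta$. Fixing a regular semisimple $Q\in\mathfrak{t}$ and setting $\mathsf{B}(e):=Qz^{-2}dz+\mathsf{A}(e)$ yields a $\Theta$-reduced representation of $\Theta$-order $2$ whose leading coefficient is the constant $Q$, so $\mathrm{Res}_0$ of it is the regular semisimple $Q$. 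Theorem \ref{pq} then provides a $\Theta$-parahoric trivialization $e'$ with $\mathsf{B}(e')=\sum_{r\ge -2}B^{(r)}z^{r}dz$, $B^{(-2)}=Q$ and $B^{(r)}\in\mathfrak{t}$; taking $\hat Q:=Qz^{-2}$ and $\hat R:=\sum_{r\ge -1}B^{(r)}z^{r}$ settles this case ($c=2$).

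\textbf{Only if, non-regular case, and the converse.} If $\mathsf{A}$ is not regular, one starts from the $\Theta$-reduced representation $\mathsf{A}(e)=\sum_{r\ge -c_0}\hat A^{(r)}z^{r}dz$, $c_0>1$, with all Levi coefficients nilpotent at the singular orders, and constructs $\mathsf{B}$ and the gauge $\hat h\in\widehat{P}_\Theta$ simultaneously by a coupled induction on the order: one inserts a regular semisimple $Q_{-c}\in\mathfrak{t}$ at the top order and, at each subsequent singular order $r$, chooses the next factor of $\hat h$ as in the algorithm of Proposition \ref{x} --- using that $\mathrm{ad}_{Q_{-c}}$ is invertible on $\bigoplus_\alpha\mathfrak{g}_\alpha$ and zero on $\mathfrak{t}$ --- together with the next Cartan coefficient $Q_r$, arranged so that the error to be absorbed at that order (coming from the nilpotent Levi part of $\hat A^{(r)}$ and from the commutators generated by $\hat h$) is nilpotent, hence annihilated rather than turned into a new semisimple coefficient. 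Then $\mathsf{B}(e)=\hat Q\,dz+\mathsf{A}(e)$ holds by construction and $\mathsf{B}(\hat h e)=(\hat Q+\hat R)dz$ with $\hat R=\sum_{r\ge -1}R_rz^{r}$, $R_r\in\mathfrak{t}$. For the converse, given $\Theta,\mathsf{B},e,e',\hat Q,\hat R$ as stated and $\hat h=e'\circ e^{-1}\in\widehat{P}_\Theta$, the formal Higgs field $\mathsf{A}-\mathsf{B}$ equals $-\hat Q\,dz$ in $e$ and $-\mathrm{Ad}_{\hat h}(\hat Q)\,dz$ in $e'$, so $\mathsf{A}(e')=(\hat Q-\mathrm{Ad}_{\hat h}(\hat Q)+\hat R)dz$; expanding $\mathrm{Ad}_{\hat h}(\hat Q)=\hat Q+[\log\hat u,\hat Q]+\cdots$ with $\hat h=\hat l\hat u$, $\hat u\in\widehat{U}_\Theta$, $\hat l\in\widehat{L}_\Theta$, and noting that brackets of $\widehat{\mathfrak{u}}_\Theta$-elements against the Cartan-valued $\hat Q$ are root-space valued (the semisimple contributions arising only from the higher brackets and recombining with $\hat Q$ into a $\mathfrak{t}$-valued polar part of the same shape), one reads off that the Levi coefficients of $\mathsf{A}(e')$ at the singular orders are nilpotent --- unless the leading coefficient cancels, in which case $\mathsf{A}$ is regular and Corollary \ref{d4} applies. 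Either way $\mathsf{A}$ is relatively regular.

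\textbf{Main obstacle.} The technical heart, in both directions, is the coupled induction just indicated: one must check that the polar part $\hat Q$ is determined consistently --- so that ``$\hat Q$ added to $\mathsf{A}(e)$'' and ``$\hat Q$ appearing in $\mathsf{B}(e')$'' literally coincide, with the Cartan-valued part of each polar coefficient being tracked carefully --- and that every obstruction met at a singular order is nilpotent, hence absorbed by a single conjugation against the regular semisimple leading term rather than leaving a spurious semisimple $\mathfrak{t}$-contribution. This is exactly where the nilpotency hypothesis on $\mathsf{A}$ (and the regular semisimplicity of the leading coefficient of $\hat Q$) is used in an essential way; the surrounding holomorphic bookkeeping, and the identification of $\hat R$ as a $\mathfrak{t}$-valued form of orders $\ge -1$, are routine consequences of Theorem \ref{pq} together with the hypothesis $c>1$, which makes the Maurer--Cartan contributions negligible at the singular orders.
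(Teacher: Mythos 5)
Your route is, in outline, the paper's: the regular case via Corollary \ref{d4} followed by Theorem \ref{pq} applied to $\mathsf{B}(e)=Qz^{-2}dz+\mathsf{A}(e)$ is exactly the intended argument, and the mechanism you invoke in the non-regular case (absorbing nilpotent obstructions by conjugation against a regular semisimple polar coefficient) is the one fact the paper's proof actually records, namely that for a regular semisimple $t$ and a nilpotent $N\in C_{\mathfrak{g}}(\Theta)$ one can solve $[t,N']+N=0$ with $N'$ nilpotent, combined with Corollary \ref{d4}, Theorem \ref{dd} and Theorem \ref{pq}. But precisely at that point your write-up has a gap rather than a proof: the ``coupled induction'' is only named as the main obstacle, and the claim that each error met at a polar order is ``nilpotent, hence annihilated rather than turned into a new semisimple coefficient'' is not justified. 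Annihilation requires the error to lie in the image of $\mathrm{ad}_{Q_{-c}}$, i.e.\ to have no component along $\mathfrak{t}$, and a nilpotent element of $C_{\mathfrak{g}}(\Theta)$ can perfectly well have one (already in $\mathfrak{sl}_2$: $\left(\begin{smallmatrix}1&1\\-1&-1\end{smallmatrix}\right)$ is nilpotent with nonzero diagonal part). Note also that Theorem \ref{pq} by itself only returns $\mathsf{B}(e')$ valued in the Cartan $C_{\mathfrak{g}}(S)$ with leading term $S$, where $S$ is the semisimple part of $Q_{-c}+\mathrm{Res}_0(\hat A^{(-c)})$; when the nilpotent constant term does not commute with $Q_{-c}$, $S$ need not equal $Q_{-c}$, so the requirement that the \emph{same} $\hat Q$ reappear in $\mathsf{B}(e')$ is exactly the point that must be argued and is where the paper inserts its displayed observation. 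You have identified the right ingredients but not supplied the step they are needed for.

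The converse, as you wrote it, contains a step that fails. You factor $\hat h=e'\circ e^{-1}=\hat l\hat u$ and then expand $\mathrm{Ad}_{\hat h}(\hat Q)=\hat Q+[\log\hat u,\hat Q]+\cdots$, which silently discards $\mathrm{Ad}_{\hat l}$. If the Levi factor has a nontrivial constant part $h\in C_G(\Theta)$ --- take $G=SL_2$, $\Theta=0$ and $h$ a Weyl representative, so $\mathrm{Ad}_h(Q_r)=-Q_r$ --- then $\mathsf{A}(e')=\hat Q+\hat R-\mathrm{Ad}_{\hat h}(\hat Q)$ has polar coefficients $2Q_r$, which are regular semisimple; the Levi-constant coefficients at the singular orders are then not nilpotent, and the leading term does not cancel, so your dichotomy ``nilpotent, or else $\mathsf{A}$ is regular'' cannot be ``read off'' at $e'$ (nor at $e$, where one gets $-2\hat Q$ plus regular terms). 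Some additional argument is needed --- e.g.\ normalizing $\hat h$ against the stabilizer of $\hat Q$, or producing a different parahoric trivialization in which the nilpotency of the relevant coefficients can be verified --- and even for pure $\hat u\in\widehat{U}_\Theta$ your parenthetical about the higher brackets ``recombining with $\hat Q$ into a $\mathfrak{t}$-valued polar part'' concedes semisimple contributions whose nilpotency you never establish. The paper does not spell out its converse either, but your proposed derivation of it, as stated, does not go through.
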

\begin{proof}Note that for a regular semisimple element $t\in \mathfrak{g}$ and a nilpotent element $N\in C_{\mathfrak{\mathfrak{g}}}(\Theta)$, one can find nilpotent element $N'\in C_{\mathfrak{\mathfrak{g}}}(\Theta)$ such that $[t,N']+N=0$. By Corollary \ref{d4}, Theorem \ref{dd} and Theorem \ref{pq}, we immediately get the above criteria of relative regularity.
\end{proof}

\subsection{Nilpotent Leading Coefficient}

\begin{proposition}\label{xx}Let $ \mathsf{A}$ be a formal connection (or Higgs field) on a   formal parahoric principal $G$-bundle $(\mathbf{P}, \Theta\in\mathfrak{t}_\mathbb{R}, \mathcal{P})$  with a $\Theta$-reduced representation
\begin{align*}
 \mathsf{A}(e)=\sum\limits_{r\geq -c}\hat A^{(r)}z^{r}dz
\end{align*}   under a $\Theta$-parahoric  trivialization $e$, where $c>1$. Write $\hat A^{(-c)}=\sum\limits _{l_1<l_2<\cdots}\hat A^{(-c)}[l_\mu]$ for some non-negative integer $l_1$, and write $\hat A^{(-c)}[l_1]=\sum\limits_{i\in \mathbb{Z}}A^{(-c)}[l_1]_iz^i$. Assume $A^{(-c)}[l_1]_{l_1}$ is a nonzero nipotent element in $\mathfrak{g}$. Then there is a $\Theta$-parahoric  trivialization $e'$  such that $\mathsf{A}(e')$ has a $\Theta$-reduced  representation
\begin{align*}
  \mathsf{A}(e')=\sum\limits_{r\geq -c}\hat B^{(r)}z^{r}dz
\end{align*}
with $\hat B^{(r)}=\sum\limits_{\lambda+i\geq l_1}\sum\limits_{i\in\mathbb{Z}}X^{(r)}_{\lambda,i}z^i$ satisfying
    \begin{itemize}
      \item $B^{(-c)}[l_1]_{l_1}$ is given by a $G$-adjoint action on $A^{(-c)}[l_1]_{l_1}$,
      \item   $[Q,X^{(r)}_{\lambda,i}]\left\{
                                        \begin{array}{ll}
                                          \in \mathfrak{t}, & \hbox{$X^{(r)}_{\lambda,i}= B^{(-c)}[l_1]_{l_1}$;} \\
                                          =0, & \hbox{\textrm{otherwise},}
                                        \end{array}
                                      \right.
     $ where  $Q$ is a   nonzero nilpotent  element in $\mathfrak{g}$.
    \end{itemize}
\end{proposition}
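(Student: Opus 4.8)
The plan is to replay the four-step induction of the proof of Proposition~\ref{x}, replacing the semisimple element $S$ there by the Jacobson--Morozov data of the nilpotent element $N:=A^{(-c)}[l_1]_{l_1}$. As in Proposition~\ref{x}, since $c>1$ the term $\hat g\,d\hat g^{-1}$ plays no role and only $\mathrm{Ad}_{\hat g}$ needs to be tracked, so the argument is the same for connections and for Higgs fields.

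First I would fix the $\mathfrak{sl}_2$-triple. Since $\hat A^{(-c)}[l_1]\in\widehat{\mathfrak{p}}_\Theta[l_1]$, its $z^{l_1}$-coefficient $N$ lies in $\mathfrak{g}^{(\Theta)}_0=C_{\mathfrak{g}}(\Theta)$, a reductive subalgebra having $\mathfrak{t}$ as a Cartan subalgebra. Applying Jacobson--Morozov inside $C_{\mathfrak{g}}(\Theta)$ and then conjugating by an element of $C_G(\Theta)$ to bring the semisimple member of the triple into $\mathfrak{t}$ --- such a conjugation commutes with $\Theta$, hence acts on each coefficient of $\mathsf{A}(e)$ by $\mathrm{Ad}$, preserves the $\Theta$-grading, and is the initial $\widehat{L}_\Theta$-gauge transformation realizing the clause that $\hat B^{(-c)}[l_1]_{l_1}$ is obtained from $N$ by a $G$-adjoint action --- I may assume from the start that $N$ lies in an $\mathfrak{sl}_2$-triple $(N,H,Q)$ with $H\in\mathfrak{t}$ and $H,Q\in\mathfrak{g}^{(\Theta)}_0$; then $Q$ is a nonzero nilpotent of $\mathfrak{g}$, $[Q,N]=-H\in\mathfrak{t}$, and $\mathrm{ad}_N,\mathrm{ad}_Q$ preserve every eigenspace $\mathfrak{g}^{(\Theta)}_\lambda$. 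From $\mathfrak{sl}_2$-representation theory applied to each $\mathfrak{g}^{(\Theta)}_\lambda$ one obtains the $\Theta$-graded decomposition
\begin{align*}
\mathfrak{g}^{(\Theta)}_\lambda=\mathrm{ad}_N\big(\mathfrak{g}^{(\Theta)}_\lambda\big)\ \oplus\ \big(\mathfrak{g}^{(\Theta)}_\lambda\cap C_{\mathfrak{g}}(Q)\big),\qquad\lambda\in\mathbb{R},
\end{align*}
with $\mathrm{ad}_N$ surjecting onto the first summand; termwise this passes to each finite-dimensional piece $\widehat{\mathfrak{p}}_\Theta[l]$.

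The key algebraic input, replacing the solvability of the equations \eqref{va} and \eqref{2}, is then: for every $\hat Z\in\widehat{\mathfrak{p}}_\Theta[l]$ there is $\hat U\in\widehat{\mathfrak{p}}_\Theta[l]$ with $\hat Z+[\hat U,N]\in C_{\widehat{\mathfrak{g}}}(Q)$ (project $\hat Z$ off $\mathrm{ad}_N(\widehat{\mathfrak{p}}_\Theta[l])$ and cancel that component using $\mathrm{ad}_N$), and adding to $\hat Z$ a term already lying in $C_{\widehat{\mathfrak{g}}}(Q)$ keeps this solvable with output still in $C_{\widehat{\mathfrak{g}}}(Q)$ --- this plays exactly the role $[S,B_0]=0$ did before. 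With this, the induction runs exactly as in Steps~1--4 of Proposition~\ref{x}: I process the coefficients $X^{(r)}_{\lambda,i}$ in the order dictated by the sharp data $\mathbf{S}_\Theta(\mathsf{A})$ --- the components of $\mathrm{Res}(\hat A^{(r)})$ over the integer eigenvalues of $\Theta$ and over $r$, then the higher $\widehat{\mathfrak{p}}_\Theta[l_\mu]$-pieces, and finally the alternating induction on the pair $(k,\mu)$ with gauge transformations $\exp(z^{k+1}\hat U[l_\mu])$ --- choosing $\hat U$ by the solvability statement so as to push the coefficient under treatment into $C_{\mathfrak{g}}(Q)$. Just as in Proposition~\ref{x}, none of these transformations un-normalizes a coefficient treated earlier, and none perturbs $N$ past the initial normalization; the representation stays $\Theta$-reduced with $N\neq0$, and the level bounds of Proposition~\ref{xxx} are maintained. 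The only coefficient never carried into $C_{\mathfrak{g}}(Q)$ is $N=\hat B^{(-c)}[l_1]_{l_1}$ itself, for which $[Q,N]=-H\in\mathfrak{t}$; since $H\neq0$, $N\notin C_{\mathfrak{g}}(Q)$, and this is precisely the exceptional term asserted.

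I do not anticipate a conceptual obstruction: once the $\mathfrak{sl}_2$-decomposition is in place, every equation that arises is solvable for the same reason as in Proposition~\ref{x}. The real work, as there, is the bookkeeping --- pinning down the order of processing and the graded pieces used at each stage so that normalizing one coefficient neither spoils an earlier one nor disturbs $N$, and checking that no terms below level $l_1$ are created; this is carried out exactly as in Steps~1--4 of Proposition~\ref{x} and in Proposition~\ref{xxx}, and is where the main (routine) effort goes.
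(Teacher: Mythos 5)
Your proposal is correct and takes essentially the same route as the paper's own proof: a Jacobson--Morozov triple $(N,H,Q)$ in $C_{\mathfrak{g}}(\Theta)$ with $H$ moved into $\mathfrak{t}$ by an $\widehat{L}_\Theta$-gauge transformation, followed by the same induction on the sharp data as in Proposition \ref{x}, each step being solvable because $\mathrm{im}(\mathrm{ad}_N)$ is complementary to $\ker(\mathrm{ad}_Q)$. The only differences are expository (you make explicit that the triple is taken inside $C_{\mathfrak{g}}(\Theta)$ and phrase the graded pieces via $\widehat{\mathfrak{p}}_\Theta[l]$ rather than the paper's spaces $V_{l,a}$).
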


\begin{proof} Since   $A^{(-c)}[l_1]_{l_1}$ is a nonzero nilpotent element in $\mathfrak{g}$, there is a representation $\rho:sl_2(\mathbb{C})\rightarrow \mathfrak{g}$ such that $A^{(-c)}[l_1]_{l_1}=P=\rho(\left(
                                                                                                   \begin{array}{cc}
                                                                                                     0 & 0\\
                                                                                                     1 & 0\\
                                                                                                   \end{array}
                                                                                                 \right)
)$,  $ Q=\rho(\left(
                                                                                                   \begin{array}{cc}
                                                                                                     0 & 1\\
                                                                                                     0& 0\\
                                                                                                   \end{array}
                                                                                                 \right)
)$ and $ H=\rho(\left(
                                                                                                   \begin{array}{cc}
                                                                                                     1& 0\\
                                                                                                     0& -1\\
                                                                                                   \end{array}
                                                                                                 \right)
)$ forms a  Jacobson-Morozov $sl_2(\mathbb{C})$-triple. After suitable a $\widehat{L}_\Theta$-gauge transformation on $ \mathsf{A}$, we can put $H$ in the given Cartan subalgebra
 $\mathfrak{t}$.
 Define  the finite dimensional space
\begin{align*}
  V_{l, a}=\{Y\in\mathfrak{g}:[\Theta,Y]=(l- a)Y,a\in\mathbb{Z}\}.
\end{align*}
Note that $V_{l, a}$ is preserved by $P,Q, H$, i.e. $V_{l, a}$ is a representation space of  $sl_2(\mathbb{C})$. The induction process  on the sharp data $\{-c+k,l_{\mu^{(-c+k)}},a^{(\mu^{(-c+k)})}_j\}$ is parallel with that in the proof of Proposition \ref{x}. In particular, we need to  consider the gauge transformation $\exp(z^{k+a^{(\mu^{(-c+k)})}_j}Y^{(l_{\mu^{(-c+k)}}, a^{(\mu^{(-c+k)})}_j)})\in\widehat{P}_\Theta$,  where $Y^{(l_{\mu^{(-c+k)}}, a^{(\mu^{(-c+k)})}_j)}\in  V_{l_{\mu^{(-c+k)}}, a^{(\mu^{(-c+k)})}_j}$ is subject to the following equation
\begin{align}\label{v}
  [Q,Z^{(l_{\mu^{(-c+k)}}, a^{(\mu^{(-c+k)})}_j)}+[Y^{(l_{\mu^{(-c+k)}}, a^{(\mu^{(-c+k)})}_j)}, P] ]=0
\end{align}
for some $Z^{(l_{\mu^{(-c+k)}}, a^{(\mu^{(-c+k)})}_j)}\in  V_{l_{\mu^{(-c+k)}}, a^{(\mu^{(-c+k)})}_j}$. It admits solutions since the range of $\mathrm{ad}(P)$ is
complementary to the kernel of  $\mathrm{ad}(Q)$.
\end{proof}

A variant of Proposition \ref{x} and Proposition \ref{xx} is given as follows.

\begin{proposition}\label{z2}Let  $ \mathsf{A}$ be a formal connection (or Higgs field) on a   formal parahoric principal $G$-bundle $(\mathbf{P}, \Theta, \mathcal{P})$  with a $\Theta$-reduced representation
\begin{align*}
 \mathsf{A}(e)=\sum\limits_{r\geq -c}\hat A^{(r)}z^{r}dz
\end{align*}  under a $\Theta$-parahoric  trivialization $e$, where $c>1$. The semisimple part of $\mathrm{Res}(\hat A^{(-c)})$  is denoted by $\hat S$.
\begin{enumerate}
  \item There is a $\Theta$-parahoric  trivialization  $e'$ such that $\mathsf{A}(e')$ has a $\Theta$-reduced  representation
  \begin{align*}
    \mathsf{A}(e')=\sum\limits_{r\geq -c}\hat B^{(r)}z^{r}dz
  \end{align*} with $ \hat B^{(r)}=\sum\limits _{0=l_1<l_2<\cdots}\hat B^{(r)}[l_\mu]$
   satisfying
    \begin{itemize}
      \item $\mathrm{Res}(\hat B^{(-c)})=\mathrm{Res}(\hat A^{(-c)})$,
      \item $[\hat S,  B^{(r)}[l_\mu]]=0$.
    \end{itemize}
  \item If  $\mathrm{Res}(\hat A^{(-c)})$ is nonzero and $\hat S$ vanishes, then  there is a $\Theta$-parahoric  trivialization $e'$  such that $\mathsf{A}(e')$ has a $\Theta$-reduced  representation
       \begin{align*}
        \mathsf{A}(e')=\sum\limits_{r\geq -c}\hat B^{(r)}z^{r}dz
       \end{align*}
      with $ \hat B^{(r)}=\sum\limits _{0=l_1<l_2<\cdots}\hat B^{(r)}[l_\mu]$ satisfying
       \begin{itemize}
         \item $\mathrm{Res}(\hat B^{(-c)})=\mathrm{Res}(\hat A^{(-c)})$
         \item $[\hat Q,\hat B^{(r)}[l_\mu]]\left\{
                                        \begin{array}{ll}
                                          \textrm{is a semisimple element in } \widehat{\mathfrak{l}}_\Theta, & \hbox{$\hat B^{(r)}[l_\mu]= \mathrm{Res}(\hat B^{(-c)})$;} \\
                                          =0, & \hbox{\textrm{otherwise},}
                                        \end{array}
                                      \right.
     $
       \end{itemize}
       where the  $\hat Q$ is a nonzero nilpotent  element in $\widehat{\mathfrak{l}}_\Theta$.
\end{enumerate}
\end{proposition}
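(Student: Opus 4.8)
I would prove this by transcribing, block by block, the inductive reduction schemes of Propositions~\ref{x} and \ref{xx}, but replacing throughout the pair $\big(\mathfrak{g},\,\mathrm{Res}_0(\hat A^{(-c)})\big)$ by the pair $\big(\widehat{\mathfrak{l}}_\Theta,\,\mathrm{Res}(\hat A^{(-c)})\big)$. Three structural facts make this legitimate. First, $\widehat{\mathfrak{l}}_\Theta$ is a reductive Lie algebra (conjugation by $z^{\Theta}$ identifies it with $C_{\mathfrak{g}}(\Theta)=\mathrm{Lie}\,C_G(\exp 2\pi\sqrt{-1}\Theta)$), so Jordan decompositions and Jacobson--Morozov triples are available in it; moreover, as recorded in the preamble of this section, the Jordan decomposition of $\mathrm{Res}(\hat A^{(-c)})$ in $\widehat{\mathfrak{l}}_\Theta$ is compatible with viewing it as an endomorphism of $\widehat{\mathfrak{p}}_\Theta$. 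Second, each $\widehat{\mathfrak{p}}_\Theta[l]$ ($l\geq 0$) is a finite-dimensional $\widehat{\mathfrak{l}}_\Theta$-module, since the sharp degree is additive under the bracket, so $[\widehat{\mathfrak{l}}_\Theta,\widehat{\mathfrak{p}}_\Theta[l]]\subseteq\widehat{\mathfrak{p}}_\Theta[l]$. Third, by Proposition~\ref{1}, any gauge transformation lying in $\widehat{U}_\Theta$ leaves $\mathrm{Res}(\hat A^{(-c)})$ unchanged (one may take $h=1$). Since $c>1$, the Maurer--Cartan term is harmless for the leading coefficient exactly as in the proofs of Propositions~\ref{x} and \ref{xx}, so only $\mathrm{Ad}_{\hat g}$ must be tracked; and all gauge transformations used below will be of the form $\exp(z^m\hat U[l_\mu])$ with $\hat U[l_\mu]\in\widehat{\mathfrak{p}}_\Theta[l_\mu]$ and either $m\geq 1$ or ($m=0$ and $l_\mu>0$), hence with exponent in $\widehat{\mathfrak{u}}_\Theta$ --- which is precisely what will force $\mathrm{Res}(\hat B^{(-c)})=\mathrm{Res}(\hat A^{(-c)})$ in both parts.

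\textbf{Part (1).} Write the Jordan decomposition $\mathrm{Res}(\hat A^{(-c)})=\hat S+\hat N$ in $\widehat{\mathfrak{l}}_\Theta$, so $[\hat S,\hat N]=0$, $[\hat S,\mathrm{Res}(\hat A^{(-c)})]=0$, and $\mathrm{ad}_{\hat S}$ acts semisimply on every $\widehat{\mathfrak{p}}_\Theta[l]$. The block $(r,l_\mu)=(-c,0)$ is already in the required form, since $\hat B^{(-c)}[0]=\mathrm{Res}(\hat B^{(-c)})=\mathrm{Res}(\hat A^{(-c)})$ commutes with its own semisimple part $\hat S$; this is why, unlike in Proposition~\ref{x}, there is no analogue of ``Step~1'' and no $\widehat{L}_\Theta$-transformation is ever needed. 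For every other block $(r,l_\mu)\neq(-c,0)$ I would run the lexicographic induction on the sharp data $\mathbf{S}_\Theta(\mathsf{A})$ of ``Steps~2--4'' of Proposition~\ref{x}: at each stage the correcting transformation $\exp(z^m\hat U[l_\mu])\in\widehat{U}_\Theta$ is determined by a linear equation of the shape $[\hat S,\ (\text{known element of }\widehat{\mathfrak{p}}_\Theta[l_\mu])+[\hat U[l_\mu],\mathrm{Res}(\hat A^{(-c)})]]=0$, the exact analogue of the equations solved there. Since $[\hat S,\mathrm{Res}(\hat A^{(-c)})]=0$, the operator $\mathrm{ad}_{\mathrm{Res}(\hat A^{(-c)})}=\mathrm{ad}_{\hat S}+\mathrm{ad}_{\hat N}$ preserves and acts invertibly on the sum of the nonzero $\mathrm{ad}_{\hat S}$-eigenspaces of $\widehat{\mathfrak{p}}_\Theta[l_\mu]$ (there it is $\rho\cdot\mathrm{id}+\text{nilpotent}$, $\rho\neq0$); hence the equation is solvable, and the chosen $\hat U[l_\mu]$ kills the nonzero $\mathrm{ad}_{\hat S}$-eigencomponents of that block. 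The induction order (increasing $l_\mu$, then increasing $r$) ensures, as in ``Step~4'', that later transformations do not spoil already-normalised blocks, so at the end $[\hat S,\hat B^{(r)}[l_\mu]]=0$ for all $(r,l_\mu)$.

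\textbf{Part (2).} Now $\hat S=0$, so $P:=\mathrm{Res}(\hat A^{(-c)})$ is a nonzero nilpotent element of $\widehat{\mathfrak{l}}_\Theta$ and lies in $[\widehat{\mathfrak{l}}_\Theta,\widehat{\mathfrak{l}}_\Theta]$. By Jacobson--Morozov applied inside $\widehat{\mathfrak{l}}_\Theta$, choose an $sl_2(\mathbb{C})$-triple with $\hat Q,P,H$ corresponding to $e,f,h$, so $\hat Q$ is a nonzero nilpotent of $\widehat{\mathfrak{l}}_\Theta$ and $[\hat Q,P]=H$ is semisimple. I would then repeat the induction of Proposition~\ref{xx} with $\hat Q$ in the role of $Q$: each $\widehat{\mathfrak{p}}_\Theta[l_\mu]$ is a finite-dimensional $sl_2(\mathbb{C})$-module, on which $\mathrm{im}(\mathrm{ad}_P)$ is complementary to $\ker(\mathrm{ad}_{\hat Q})$ by standard $sl_2(\mathbb{C})$-theory, and this is precisely the solvability input for the homological equation $[\hat Q,\ (\text{known element})+[\hat U[l_\mu],P]]=0$ at each stage. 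Every block with $(r,l_\mu)\neq(-c,0)$ is thereby moved into $\ker(\mathrm{ad}_{\hat Q})$, while the untouched block $\hat B^{(-c)}[0]=\mathrm{Res}(\hat B^{(-c)})=P$ satisfies $[\hat Q,P]=H$, a semisimple element of $\widehat{\mathfrak{l}}_\Theta$ --- exactly the stated exception. Note that, unlike in Proposition~\ref{xx}, one must not conjugate $H$ into $\mathfrak{t}$, since such a step would alter $\mathrm{Res}(\hat A^{(-c)})$.

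\textbf{Main obstacle.} The only point that is not a mechanical transcription of Propositions~\ref{x} and \ref{xx} --- and hence the thing to be careful about --- is sustaining the equality $\mathrm{Res}(\hat B^{(-c)})=\mathrm{Res}(\hat A^{(-c)})$ throughout the induction: one must verify that at every stage the correcting transformation genuinely lies in $\widehat{U}_\Theta$ (so Proposition~\ref{1} applies with $h=1$), and that the one block which would otherwise demand an $\widehat{L}_\Theta$- or Cartan-normalisation, namely $(-c,0)$, is already correct --- which it is, precisely because $\hat S$ (resp.\ the triple through $P$) was built intrinsically from $\mathrm{Res}(\hat A^{(-c)})$ itself rather than from a normalised form of it. Granting this bookkeeping, which is inherited from Propositions~\ref{x} and \ref{xx}, both parts follow.
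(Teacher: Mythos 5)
Your proposal is correct and is essentially the argument the paper intends: the paper states Proposition \ref{z2} as a ``variant'' of Propositions \ref{x} and \ref{xx} without writing a separate proof, and your transcription --- replacing $\big(\mathfrak{g},\mathrm{Res}_0(\hat A^{(-c)})\big)$ by $\big(\widehat{\mathfrak{l}}_\Theta,\mathrm{Res}(\hat A^{(-c)})\big)$ and running the same block-by-block homological equations --- is exactly the intended adaptation, consistent with how the paper immediately afterwards uses a Jacobson--Morozov triple $(\mathrm{Res}(\hat A^{(-c)}),\hat Q,\hat H)$ in $\widehat{\mathfrak{l}}_\Theta$ without conjugating $\hat H$ into $\mathfrak{t}$. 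You also correctly isolate the one genuine point of difference (only $\widehat{U}_\Theta$-transformations are used, so $\mathrm{Res}(\hat B^{(-c)})=\mathrm{Res}(\hat A^{(-c)})$ is preserved and no analogue of Step~1 is needed), at the same level of rigor as the paper's own inductions.
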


 We continue with some calculations   as done in \cite{bv} and \cite[Proposition 4.12]{h} for the case (2) in the above proposition.   Assume $(\mathrm{Res}(\hat A^{(-c)}),\hat Q, \hat H)$ forms a  Jacobson-Morozov $sl_2(\mathbb{C})$-triple in $\widehat{\mathfrak{l}}_\Theta$.  We have a $\Theta$-reduced  representation  $
 \mathsf{A}(e')=\sum\limits_{r\geq -c}\hat B^{(r)}z^{r}dz$, where
  $ \hat B^{(r)}=\sum\limits _{0=l_1<l_2<\cdots}\hat B^{(r)}[l_\mu]$ with $\hat B^{(r)}[l_\mu] $ lying in the centralizer $C_{\widehat{\mathfrak{p}}_\Theta[l_\mu]}(\hat Q)$ for $r>-c$ and $r=-c, l_\mu\geq l_2$.
  Choose  a basis $\{\hat Z[l_\mu]_\lambda\}_{\lambda=1,\cdots q_\mu}$ of $C_{\widehat{\mathfrak{p}}_\Theta[l_\mu]}(\hat Q)$ consisting of eigenvectors of the $\mathrm{ad}_{\hat H}$-action,  namely we have $[\hat H, \hat Z[l_\mu]_\lambda]=e_{\mu,\lambda}\hat Z[l_\mu]_\lambda$ for the eigenvalue $e_{\mu,\lambda}$ as a non-negative integer. Although there are infinitely many $l_\mu$, we only have finitely many different eigenvalues $e_{\mu,\lambda}$ since
there exists $N\in \mathbb{Z}$ such that if $l_\mu\geq l_N$ then $\widehat{\mathfrak{p}}_\Theta[l_\mu]=z^a\widehat{\mathfrak{p}}_\Theta[l_\nu]$ for some positive integer $a$ and some $\nu\leq N$.
So we can define
\begin{align*}
  \Lambda=\mathrm{sup}\{\frac{e_{\mu,\lambda}}{2}+1\}_{\lambda=1,\cdots,q_\mu; \mu=1,\cdots,}.
\end{align*}
  We write $\hat B^{(r)}[l_\mu]=\sum\limits_{\lambda=1}^{q_\mu}a^{(r)}_{\mu,\lambda}\hat Z[l_\mu]_\lambda$, and define
  \begin{align*}
  \Upsilon=\inf\{\frac{r+c}{\frac{e_{\mu,\lambda}}{2}+1}, -c+1\leq r<-c+\Lambda(c-1),a^{(r)}_{\mu,\lambda}\neq 0 \},
 \end{align*}
and if $\hat B^{(r)}[l_\mu]=0$ for all $-c+1\leq r<-c+\Lambda(c-1)$ we set $\Upsilon=\infty$.

For the general $b$-cover $(\Delta^\times)^{\sharp_b}$ of $\Delta^\times$, we have
  \begin{align*}
   \mathsf{A}^{\sharp_b}( (e')^{\sharp_b})=b\sum\limits_{r\geq -c} (\hat B^{(r)})^{\sharp_b}\zeta^{br+b-1}d\zeta.
  \end{align*}
 Then we calculate
\begin{align*}
 \widetilde{\mathrm{Ad}}_{ \zeta^{n\hat H}}(\mathsf{A^{\sharp_b}}((e')^{\sharp_b}))= &\ b(\hat B^{(-c)})^{\sharp_b}\zeta^{-bc+b-1-2n}d\zeta\\
 &+b\sum\limits_{r\geq -c+1}\sum\limits _{0=l_1<l_2<\cdots}\sum\limits_{\lambda=1}^{q_\mu}a^{(r)}_{\mu,\lambda}(\hat Z[l_\mu]_\lambda)^{\sharp_b}\zeta^{ne_{\mu,\lambda}+br+b-1}d\zeta+n\hat H\frac{ d\zeta}{\zeta},
\end{align*}
for some integer $n$, where $\zeta^{n\hat H}$ makes senses since all eigenvalues of $\hat H$ are integers.
There are following two cases.
\begin{itemize}
  \item When $c-1\leq \Upsilon\leq \infty$, we put
$b=2, n=-c+1$, one easily checks that $ne_{\mu,\lambda}+br+b-1\geq -1$. Therefore, there is $\acute{g}\in   G(\mathcal{O}_{(\Delta^\times)^{\sharp_2}})$ such that $\mathsf{A^{\sharp_2}}(\acute{g}e^{\sharp_2})$  has a form as $(\hat g)^{\sharp_2}\frac{d\zeta}{\zeta}$ with $\hat g\in \mathfrak{p}_\Theta$.
  \item When $0<\Upsilon<c-1$, we choose a positive integer $\delta$ such that $\delta\Upsilon\in \mathbb{Z}$ and put $b=2\delta$, $n=-\delta\Upsilon$, then the leading term of   $\mathsf{A^{\sharp_b}}(\zeta^{n\hat H}(e')^{\sharp_b})$ is given by
      \begin{align*}
     2\delta  ((\hat B^{(-c)})^{\sharp_b}+\sum\limits_{\frac{r+c}{\frac{e_{\mu,\lambda}}{2}+1}=\Upsilon}a^{(r)}_{\mu,\lambda}(\hat Z[l_\mu]_\lambda)^{\sharp_b})\zeta^{-2\delta c+2\delta\Upsilon+2\delta-1}d\zeta,
      \end{align*}
     where the second term is nonzero. In particular,  the $\Theta$-order of $\mathsf{A^{\sharp_b}}(\zeta^{n\hat H}(e')^{\sharp_b})$ is $-2\delta c+2\delta\Upsilon+2\delta-1<-1$.
\end{itemize}

The following theorem is an  analog of the classical reduction theorem (cf. \cite[Section 6]{bv} and \cite[Section 4]{h}).

\begin{theorem}Let  $ \mathsf{A}$ be a formal connection (or a Higgs field) on a   formal parahoric principal $G$-bundle $(\mathbf{P}, \Theta, \mathcal{P})$. Assume  $\Theta$ is an integer weight (i.e. the eigenvalues of $\mathrm{ad}_\Theta$-action on $\mathfrak{g}$ are all integers). Then there exists a $b$-cover $(\Delta^\times)^{\sharp_b}$ of $\Delta^\times$, a gauge transformation  $\acute{g}\in  G(\mathcal{O}_{(\Delta^\times)^{\sharp_b}})$ and a trivialization $\acute{e}$ of $\mathbf{P}^{\sharp_b}$ such that $\mathsf{A^{\sharp_b}}(\acute{g}\acute{e})$ has one of the following form
  \begin{itemize}
    \item $\mathsf{A^{\sharp_b}}(\acute{g}\acute{e})=(\hat X)^{\sharp_b}\frac{d\zeta}{\zeta}$ with  $\hat X\in \mathfrak{p}_\Theta$,
    \item $\mathsf{A^{\sharp_b}}(\acute{g}\acute{e})=\sum\limits_{r\geq -c}(\hat B^{(r)})^{\sharp_b}\zeta^r$ with $c>1$,  where all $\hat B^{(r)}$ with $r<-1$ lie in some Cartan subalgebra of $\mathfrak{l}_\Theta$ and all  $\hat B^{(r)}$ with $r\geq-1$ lie in the centralizer $C_{\mathfrak{l}_\Theta}(\hat B^{(-c)},\cdots,\hat B^{(-2)})$.
  \end{itemize}
\end{theorem}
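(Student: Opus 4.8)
The plan is to run the parahoric analogue of the Babbitt--Varadarajan reduction algorithm, organized as in the flowchart of the introduction: one iterates a reduction loop that, at each pass, either terminates in one of the two asserted shapes or, after passing to a suitable finite cover $(\Delta^\times)^{\sharp_b}$ and applying a parahoric (resp.\ $G(\mathcal{O}_{(\Delta^\times)^{\sharp_b}})$-valued) gauge transformation, produces a new $\Theta$-reduced representation for which a discrete invariant has strictly improved. Two observations make the setup manageable. First, since the leading index $c>1$ throughout the irregular stage, the Maurer--Cartan term $\hat g\,d\hat g^{-1}$ does not affect any of the terms we track (exactly as in the proof of Proposition \ref{x}), so connections and Higgs fields are handled uniformly and one may work with $\mathrm{Ad}$ rather than $\widetilde{\mathrm{Ad}}$ until the final logarithmic step. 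Second, the integer-weight hypothesis on $\Theta$ makes the $\mathrm{ad}_\Theta$-grading integral, so that $\widehat{\mathfrak{l}}_\Theta$ is Lie-algebra isomorphic to $\mathfrak{g}$; consequently the classical finite-dimensional invariants (dimensions of centralizers, dimensions of nilpotent orbits) are at our disposal, and the cocharacter twists $\zeta^{n\hat H}$ used below are legitimate.

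First I would examine the residue $\mathrm{Res}(\hat A^{(-c)})$ of the leading coefficient and split into the two cases of the algorithm according to whether its semisimple part is zero. If it has a nonzero semisimple part, Proposition \ref{z2}(1) together with Proposition \ref{xxx} and Theorem \ref{pq} --- the latter applied after choosing a Cartan subalgebra of $\mathfrak{g}$ containing both that semisimple part (moved into $\mathfrak{g}^{(\Theta)}_0$) and $\Theta$, along with a regular element of that Cartan --- reduce $\mathsf{A}$ to the form $\sum_{r\ge -c}B^{(r)}z^r dz$ with $B^{(-c)}$ that semisimple element and all $B^{(r)}$ in that Cartan; this is a special case of the target form (b), and the loop stops. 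If instead $\mathrm{Res}(\hat A^{(-c)})$ is nilpotent --- the situation of Proposition \ref{xx} and Proposition \ref{z2}(2) --- I would use those propositions to fix a Jacobson--Morozov triple $(P,Q,H)$ with $P$ the leading nilpotent residue, reduce all non-leading coefficients into the centralizer of $Q$, and then form the invariants $\Lambda$ and $\Upsilon$ exactly as in the computation preceding the theorem.

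That computation dichotomizes the nilpotent case. If $\Upsilon\ge c-1$ (in particular if $\Upsilon=\infty$), the shearing $\zeta^{(-c+1)\hat H}$ on the $2$-cover carries $\mathsf{A}^{\sharp_2}$ to a $\Theta$-logarithmic form $(\hat g)^{\sharp_2}\frac{d\zeta}{\zeta}$ with $\hat g\in\mathfrak{p}_\Theta$, which is the target form (a), and the loop stops; one may additionally invoke Theorem \ref{d} or Corollary \ref{d4} if one wants to recognize it as Boalch-type. If $0<\Upsilon<c-1$, one picks $\delta$ with $\delta\Upsilon\in\mathbb{Z}$, passes to the $2\delta$-cover, and applies $\zeta^{-\delta\Upsilon\hat H}$; the outcome is a new $\Theta$-reduced representation whose leading residue is $P$ plus the nonzero ``$\Upsilon$-homogeneous'' correction extracted by that computation, and the crucial point --- the parahoric analogue of the transversality estimate underlying \cite{bv} --- is that this new leading residue lies in a strictly larger nilpotent orbit of $\mathfrak{g}$, or else has become non-nilpotent, which returns us to the first case. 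We then loop back.

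With this structure termination is immediate: at every pass that does not stop we are in the nilpotent case with $0<\Upsilon<c-1$, and the dimension of the nilpotent orbit (in $\mathfrak{g}\cong\widehat{\mathfrak{l}}_\Theta$) of the leading residue strictly increases; being bounded by $\dim_\mathbb{C}\mathfrak{g}$, this can happen only finitely often, so the loop must stop in form (a) or form (b). Replacing the finitely many intermediate covers by the $b$-cover with $b$ their least common multiple, and composing the finitely many gauge transformations, yields the single $\acute{g}\in G(\mathcal{O}_{(\Delta^\times)^{\sharp_b}})$ and trivialization $\acute{e}$ in the statement. I expect the main obstacle to be making that orbit-enlargement step rigorous: one must check that the $\Upsilon<c-1$ shearing really does strictly enlarge the nilpotent orbit of the leading residue (or destroy its nilpotency), which requires tracking precisely how the residue transforms under $\zeta^{-\delta\Upsilon\hat H}$ and under the attendant change of Jacobson--Morozov triple, and one must verify that the finite covers preserve the integer-weight condition and that $\widehat{P}_\Theta$ behaves correctly under pullback. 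The remaining ingredients --- the $sl_2$- and grading manipulations and the solvability of the linear equations governing each gauge step --- are routine, being precisely those already established in the propositions of this section.
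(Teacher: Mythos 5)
Your overall plan---iterate the parahoric Babbitt--Varadarajan loop using Proposition \ref{z2} and the shearing computation with $\Lambda,\Upsilon$, then pass to a common cover---is the same skeleton as the paper's proof, which uses the integer-weight hypothesis to land in the finite-dimensional reductive $\mathfrak{l}_\Theta$ and then runs the algorithm of the Introduction. The genuine gap is in your non-nilpotent branch. You claim that whenever the leading residue has nonzero semisimple part $S$ the loop stops in a single pass, all coefficients being forced into a Cartan by Theorem \ref{pq} applied to a decomposition of $S$ augmented by an artificially chosen regular element $H$ (and $-H$) of a Cartan containing $S$ and $\Theta$. That application is not legitimate: the mechanism underlying Proposition \ref{x} and Proposition \ref{xxx} only lets you centralize coefficients with respect to the semisimple part of the leading coefficient itself (solvability of the linear equations uses invertibility of $\mathrm{ad}_{B_0}$ on the \emph{nonzero} eigenspaces of $\mathrm{ad}_S$), so smuggling in a direction $H$ that is not part of $S$ produces nothing, and the conclusion you draw is false when $S$ is not regular. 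Concretely, take $G=GL_2$, $\Theta=0$, $\mathsf{A}(e)=(Iz^{-2}+Nz^{-1})dz$ with $N$ nilpotent nonzero: here $S=I$ is central, your hypotheses (with $S_1=S+H$, $S_2=-H$, $H$ a regular diagonal element) are formally satisfied, yet no parahoric gauge transformation puts all coefficients into the diagonal Cartan with leading term $I$, since that would force the regular-singular part $Nz^{-1}dz$ to have diagonalizable monodromy, contradicting its nontrivial unipotent monodromy.

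Because of this, your loop structure and termination argument do not match the actual algorithm. In the non-nilpotent case one does not stop: as in the flowchart, one reduces the coefficients into $C_{\mathfrak{g}}(S)$, splits off the semisimple leading part, replaces $G$ by the strictly smaller reductive group $C_G(S)$, and loops back (the new leading term there may again be nilpotent and require further shearing on covers). Termination is then by the lexicographic measure the paper alludes to---the dimension of the centralizer group strictly decreases on non-nilpotent passes, while the nilpotent orbit dimension strictly increases on the $0<\Upsilon<c-1$ passes---not by orbit dimension alone, as in your sketch. Your nilpotent branch (Jacobson--Morozov normalization, the $\Upsilon$-dichotomy, and the orbit-enlargement on the slice $P+C_{\mathfrak{g}}(Q)$, deferred to Babbitt--Varadarajan) is in line with the paper's treatment; it is the short-circuiting of the non-nilpotent case, and hence the missing recursion into smaller centralizer groups, that needs to be repaired.
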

\begin{proof}Since $\Theta$ is an integer weight,   $\mathsf{A}$ has a $\Theta$-reduced  representation
$
    \mathsf{A}(e)=\sum\limits_{r\geq -c}\hat A^{(r)}z^{r}dz
 $
with respect to a $\Theta$-parahoric trivialization $e$, where $\hat A^{(r)}\in \mathfrak{l}_\Theta$. Note that $\mathfrak{l}_\Theta$ is a finite dimensional  reductive Lie algebra. Then by  Proposition \ref{z2} and the above calculations, we get the theorem via the algorithm described in the Introduction.
\end{proof}

Finally, as an application of Proposition \ref{xx},  we generalize  Frenkel-Zhu's Borel reduction theorem \cite{fz,ar} of formal connections under the  parahoric context.

\begin{proposition}\label{z5}Let  $ \mathsf{A}$ be a formal connection (or a Higgs field) on a   formal parahoric principal $G$-bundle $(\mathbf{P}, \Theta, \mathcal{P})$ with a $\Theta$-reduced representation
\begin{align*}
 \mathsf{A}(e)=\sum\limits_{r\geq -c}\hat A^{(r)}z^{r}dz
\end{align*}  under a $\Theta$-parahoric  trivialization $e$, where $c>1$. Assume  the weight  $\Theta\in \mathfrak{t}_\mathbb{R}$ is  a fixed point of the adjoint action of the Weyl group $W$\footnote{Such weight is quasi-isolated in the sense of \cite{bo}.}, and
 $\mathrm{Res}_0(\hat A^{(-c)})$ is a nilpotent element in $\mathfrak{g}$. Then for any  $\hat g \in M_\Theta(\mathsf{A},e;-c)$ and any  $\Theta$-reduced representation under the trivialization $\hat ge$
\begin{align*}
  \mathsf{A}(\hat ge)=\hat B^{(r_0)}z^{r_0}dz+\cdots
\end{align*} with some $r_0\geq -c$, we have
$\chi_0(\hat g)$ is also a  nilpotent element in $\mathfrak{g}$,
where\begin{align*}
      \chi_0(\hat g)=\left\{
                                                                                                 \begin{array}{ll}
                                                                                                  \mathrm{Res}_0( \hat B^{(-c)}), & \hbox{$r_0=-c$;} \\
                                                                                                   0, & \hbox{$r_0>-c$.}
                                                                                                 \end{array}
                                                                                               \right.
     \end{align*}
\end{proposition}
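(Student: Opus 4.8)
The plan is to pit the parahoric reduction theory of this section against itself through the slope (irregularity) of $\mathsf{A}$. Fix once and for all a faithful representation $\rho\colon G\to GL(V)$ and write $\mathrm{slope}(\mathsf{A})$ for the irregularity of the induced vector‑bundle connection $\mathsf{A}_\rho$; being a formal invariant, it is independent of the trivialization. If the leading index of $\mathsf{A}(\hat g e)$ is $>-c$, then $\chi_0(\hat g)=0$ by definition and there is nothing to prove, so I may assume the leading index equals exactly $-c$, say with $\Theta$-reduced representation $\mathsf{A}(\hat g e)=\hat B^{(-c)}z^{-c}dz+\cdots$, and I must show $\mathrm{Res}_0(\hat B^{(-c)})$ is nilpotent. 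The strategy is: (a) deduce from the hypotheses that $\mathrm{slope}(\mathsf{A})<c-1$; (b) show that if $\mathrm{Res}_0(\hat B^{(-c)})$ had a nonzero semisimple part, then $\mathrm{slope}(\mathsf{A})=c-1$; these contradict each other.

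For step (a) I would use the reduction theory for the nilpotent case. The hypothesis that $\Theta$ is a $W$-fixed weight forces $\Theta$ to lie in the centre $\mathfrak{z}(\mathfrak{g})$, so $\widehat P_\Theta=G(\mathbb{A})$, $\widehat{\mathfrak{l}}_\Theta=\mathfrak{g}$, and $\mathrm{Res}_0(\hat A^{(-c)})$ equals the genuine leading coefficient of $\mathsf{A}(e)$; by $\Theta$-reducedness it is a \emph{nonzero} nilpotent element of $\mathfrak{g}$. Thus $\mathsf{A}$ is of the type treated in case (2) of Proposition \ref{z2}, and feeding that (which rests on Proposition \ref{xx}, built on the Jacobson--Morozov $\mathfrak{sl}_2$-triple attached to the nilpotent leading term) into the block of calculations following Proposition \ref{z2} yields, after a ramified cover $(\Delta^\times)^{\sharp_b}$ with $b>1$ and a gauge transformation, a representation of $\mathsf{A}^{\sharp_b}$ that is either logarithmic (so $\mathrm{slope}(\mathsf{A})=0$) or of naive slope $c-\Upsilon-1$ in the variable $z$, with $\Upsilon>0$. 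In both cases $\mathrm{slope}(\mathsf{A})<c-1$.

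For step (b), suppose $\mathrm{Res}_0(\hat B^{(-c)})$ has a nonzero semisimple part $S'$. Since $c>1$, Proposition \ref{x} applies to $\mathsf{A}(\hat g e)$ and produces a $\Theta$-parahoric trivialization $e''$ in which $\mathsf{A}(e'')=(S'+N'')z^{-c}dz+\cdots$ with $N''$ nilpotent, $[S',N'']=0$, and with \emph{every} coefficient of $\mathsf{A}(e'')$ commuting with $S'$. Passing to the faithful $\rho$, the leading coefficient of $\mathsf{A}_\rho(e'')$ is $\rho(S')+\rho(N'')$, which is the Jordan decomposition of a matrix with semisimple part $\rho(S')\ne 0$, hence is not nilpotent; so $\mathsf{A}_\rho$ has a pole of order exactly $c$ with non‑nilpotent leading coefficient, and the classical Turrittin--Levelt theorem forces $\mathrm{slope}(\mathsf{A})=\mathrm{slope}(\mathsf{A}_\rho)=c-1$. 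This contradicts (a), and therefore $\chi_0(\hat g)=\mathrm{Res}_0(\hat B^{(-c)})$ is nilpotent. That $\chi_0$ is well defined independently of the $\Theta$-reduced representation chosen for $\mathsf{A}(\hat g e)$, and is altered only by a $C_G(\Theta)$-conjugation under a $\widehat P_\Theta$-gauge change, was already recorded via Proposition \ref{1}.

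I expect the main obstacle to be step (a): one must invoke precisely the right step of the Babbitt--Varadarajan/Boalch algorithm and verify that the ramification index introduced is compensated, so that the slope genuinely drops \emph{strictly below} $c-1$ rather than merely failing to increase; relatedly, one should pin down the trivialization‑independent meaning of slope for $G$-connections (the irregularity of a faithful, or of the adjoint, associated vector bundle) so that (a) and (b) are legitimately comparable. Step (b) is then essentially formal once Proposition \ref{x} has moved the semisimple parts into a common centralizer, and the quasi‑isolated (here: central) hypothesis on $\Theta$ is used only to make $\widehat P_\Theta=G(\mathbb{A})$, so that $\mathrm{Res}_0$ sees the actual leading coefficient and the conclusion lands honestly inside $\mathfrak{g}$.
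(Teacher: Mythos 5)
The decisive problem is your very first reduction. You read ``$\Theta$ is a fixed point of the adjoint action of $W$'' as a linear condition on $\mathfrak{t}_\mathbb{R}$, concluded that $\Theta$ is central, hence $\widehat{P}_\Theta=G(\mathbb{A})$, $\widehat{\mathfrak{l}}_\Theta=\mathfrak{g}$, and $\mathrm{Res}_0(\hat A^{(-c)})$ is the honest matrix leading coefficient of $\mathsf{A}_\rho$. But the footnote (quasi-isolated in Bonnaf\'e's sense) and the paper's own argument show that the hypothesis is meant in the weaker form $w\cdot\Theta-\Theta\in (E_T)_*$ for all $w\in W$, i.e.\ exactly what is needed so that $W$ lifts into $\widehat{W}_\Theta$ and representatives of $\widehat{W}_\Theta\backslash\widehat{W}/\widehat{W}_\Theta$ can be taken of the form $z^\Xi$. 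This allows genuinely nontrivial parahorics: for $SL_2$ with $\Theta=\tfrac12\alpha^\vee$ one gets the non-hyperspecial maximal parahoric, with $\mathrm{ad}_\Theta\neq 0$ even though $\exp(2\pi\sqrt{-1}\Theta)$ is central. In that setting your mechanism collapses: a coefficient $\hat B^{(r)}\in\widehat{\mathfrak{p}}_\Theta$ may contain terms $X_{\lambda,i}z^i$ with $i<0$ (only $\lambda+i\geq0$ is required), so the naive pole order of $\mathsf{A}_\rho$ can exceed $c$, $\mathrm{Res}_0(\hat B^{(-c)})$ is no longer the leading coefficient of $\mathsf{A}_\rho$, and the classical dichotomy ``nilpotent leading coefficient $\Leftrightarrow$ slope $<c-1$'' neither implies nor is implied by nilpotency of $\mathrm{Res}_0(\hat B^{(-c)})$. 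So both your steps (a) and (b) only function in the degenerate case $\widehat{P}_\Theta=G(\mathbb{A})$, where the proposition reduces to (essentially) Frenkel--Zhu's original statement; in the intended parahoric generality the slope comparison would have to be rebuilt on the Moy--Prasad/fundamental-stratum formalism of Bremer--Sage, which is a substantial missing ingredient, not a routine verification. (Even in the central case your argument is fine but leans on classical Katz/Turrittin slope theory that the paper never develops.)

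For contrast, the paper's proof is purely algebraic and stays inside the parahoric framework: using the Cartan decomposition $\widehat{G}=\coprod\widehat{P}_\Theta w\widehat{P}_\Theta$ and the $W$-fixedness of $\Theta$, it writes $\hat g=\hat g_1 z^\Xi\hat g_2$ with $\hat g_1,\hat g_2\in\widehat{P}_\Theta$ and $\Xi\in(E_T)_*$; gauge changes by $\hat g_2$ and $\hat g_1$ alter residues only by an $\mathrm{Ad}_{C_G(\Theta)}$-conjugation (Proposition \ref{1}), which preserves nilpotency, and the effect of $z^\Xi$ is controlled by decomposing the residue into $\mathrm{ad}_\Xi$-eigenspaces inside $C_\mathfrak{g}(\Theta)$: membership of $\hat g$ in $M_\Theta(\mathsf{A},e;-c)$ forces the surviving graded components to sit on one side of the grading, so the new residue is a nilpotent zero-graded piece plus terms of nonzero $\Xi$-degree of a single sign, hence nilpotent. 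No slopes, no faithful representation, no ramified covers. If you want to keep your slope-theoretic route, you must either prove the proposition only after justifying that the hypothesis really forces $\widehat{P}_\Theta=G(\mathbb{A})$ (which contradicts the evident intent and the way the result feeds into Theorem \ref{xz} and Corollary \ref{xzz}), or replace the naive leading coefficient by depth/stratum data adapted to the filtration $\widehat{\mathfrak{p}}_\Theta[l]$ --- at which point the argument is no longer simpler than the one in the paper.
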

\begin{proof}It is known that there is the  Cartan decomposition\footnote{In some literature, this decomposition is also called Bruhat decomposition \cite{la} or Birkhoff decomposition \cite{fz}.}
\begin{align*}
  \widehat{G}=\coprod_{w\in\widehat{W}_\Theta\backslash\widehat{W}/\widehat{W}_\Theta }\widehat{P}_\Theta w\widehat{P}_\Theta,
\end{align*}
where $\widehat{W}_\Theta=(N_T(\mathbb{K})\bigcap\widehat{P}_\Theta)/T(\mathbb{A})$ (see \cite[Proposition 8.17]{la}, \cite[Proposition 8]{hr}), also cf. \cite{hb,y}). By the assumption that $\mathrm{Ad}_W(\Theta)=\Theta$, we can decompose $\hat g\in  M(\mathsf{A},e)$ as
 \begin{align*}
   \hat g=\hat g_1z^\Xi\hat g_2
 \end{align*}
for $\hat g_1, \hat g_2\in \widehat{P}_\Theta,  \Xi\in (E_T)_*$. We only need to consider the case of $r_0=-c$, then we
write the  $\Theta$-reduced representations
 \begin{align*}
   \mathsf{A}(\hat g_2e)&=\hat C^{(-c)}z^{-c}dz+\cdots,\\
   \mathsf{A}(z^{\Xi}\hat g_2e)&=\hat D^{(-c)}z^{-c}dz+\cdots.
 \end{align*}
Since $[\Theta,\Xi]=0$, we can decompose
   $\mathrm{Res}_0(C^{(-c)})=\sum\limits_{j\in\mathbb{Z}}X_j$ for $X_j\in \mathfrak{g}_j^{(\Xi)}\bigcap C_\mathfrak{g}(\Theta)$.
   Due to $\hat g\in M(\mathsf{A},e)$, we finds that if $X_j\neq 0$, then $j\geq0$. It follows form Proposition \ref{1} that  $\mathrm{Res}_0(C^{(-c)})$ is nilpotent, thus $X_0$ is nilpotent.
   Similarly,
     $\mathrm{Res}_0(\hat D^{(-c)})=X_0+Y$ for some $Y\in\sum\limits_{j\in\mathbb{Z}^{<0}}\mathfrak{g}_j^{(\Xi)}$, hence $\mathrm{Res}_0(\hat D^{(-c)})$ is nilpotent. The proposition follows.
\end{proof}

 \begin{theorem}\label{xz}Let  $ \mathsf{A}$ be a formal connection on a   formal parahoric principal $G$-bundle $(\mathbf{P}, \Theta, \mathcal{P})$ with a $\Theta$-reduced representation
\begin{align*}
 \mathsf{A}(e)=\sum\limits_{r\geq -c}\hat A^{(r)}z^{r}dz
\end{align*}  under a $\Theta$-parahoric  trivialization $e$, where $c>1$. If there is $\hat g \in M_\Theta(\mathsf{A},e;-c)$ such that $\chi_0(\hat g)$
 is a nilpotent element in $\mathfrak{g}$,
then we can find  $\hat g' \in M_\Theta(\mathsf{A},e;-c)$ such that
$\chi_0(\hat g'\hat g)$ is a regular nilpotent element in $\mathfrak{g}$.
\end{theorem}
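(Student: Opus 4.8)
The plan is to enlarge, step by step, the $G$-orbit of the nilpotent element produced by $\chi_0$, using the nilpotent-leading-coefficient reduction (Proposition \ref{xx}) together with the shearing computation with the invariants $\Lambda,\Upsilon$ carried out above. Since the nilpotent orbits of $\mathfrak g$ form a finite poset under the closure order, with the regular nilpotent orbit as unique maximal element (equivalently $\overline{\mathcal O_{\mathrm{reg}}}$ is the whole nilpotent cone $\mathcal N$), it suffices to establish the following one-step statement and iterate: if under some trivialization $f$ the formal connection $\mathsf A$ has a $\Theta$-reduced representation $\sum_{r\geq -c}\hat A^{(r)}z^r dz$ of leading index $-c$ whose residue $N:=\mathrm{Res}_0(\hat A^{(-c)})$ is nilpotent but not regular, then there is $\hat h\in\widehat G$ such that $\mathsf A(\hat h f)$ again has a $\Theta$-reduced representation of leading index $-c$ whose residue is nilpotent and lies in a $G$-orbit strictly larger than that of $N$. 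Composing the $\hat h$'s, starting from $f=\hat g e$, then produces the required $\hat g'$; the case $\chi_0(\hat g)=0$, i.e. $\mathsf A(\hat g e)$ having leading index $>-c$, is reduced to this one by first using the same shearing mechanism to lower the order back to $-c$ with a (not necessarily regular) nilpotent residue, which is possible since $c>1$.

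To carry out the one-step statement, complete $N$ to a Jacobson--Morozov $\mathfrak{sl}_2$-triple $(N,H,Q)$ with $H$ put in the fixed Cartan $\mathfrak t$ after an $\widehat L_\Theta$-gauge transformation, exactly as at the beginning of the proof of Proposition \ref{xx}: here $Q$ is the opposite nilpotent, $[Q,N]=H$, $N$ has $\mathrm{ad}_H$-weight $-2$, and $\ker(\mathrm{ad}_Q)$ has all $\mathrm{ad}_H$-weights $\geq 0$. By Proposition \ref{xx} (nilpotent case, $l_1=0$) one may assume that every coefficient of the $\Theta$-reduced representation lies in $\ker(\mathrm{ad}_Q)$ except the leading term, which is $N$. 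Now shear by $\mathrm{Ad}_{z^{\mu}}$ for an appropriate cocharacter $\mu\in(E_T)_*$ proportional to $H$ (so that, unlike in the displayed computation above, no ramified cover is needed; the resulting Maurer--Cartan term $-\mu\,dz/z$ has order $-1$ and is harmless because $c>1$): this moves the $N$-term to less negative powers of $z$ and pulls the positive-weight components of the $\ker(\mathrm{ad}_Q)$-coefficients towards the pole, and for the critical value of $\mu$ (the counterpart of the invariant $\Upsilon$) the leading index stays equal to $-c$ while the new leading coefficient becomes $N+v$ with $v$ a nonzero combination of highest-weight vectors of $\ker(\mathrm{ad}_Q)$; in particular the new residue lies in the affine Slodowy slice $N+\ker(\mathrm{ad}_Q)$.

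The remaining point, which I expect to be the main obstacle, is to arrange, by such a shearing (possibly preceded by a $\widehat P_\Theta$-gauge transformation rotating $v$ among the available highest-weight directions and followed by a further application of Proposition \ref{xx}), that the new residue $N+v$ is \emph{nilpotent} and has \emph{strictly larger} $G$-orbit than $N$. Abstractly such $v$ exist in abundance: the nilpotent Slodowy slice $(N+\ker(\mathrm{ad}_Q))\cap\mathcal N$ meets every orbit above $\mathcal O_N$, and meets $\mathcal O_{\mathrm{reg}}$ in a dense open subset; the work is to show that one of them actually occurs as the shearing correction and that the pole order is not driven below $-c$ on the way. This is the parahoric version of the termination mechanism in Babbitt--Varadarajan's reduction algorithm. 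Once it is in place, the remaining bookkeeping is routine: $\chi_0(\hat g'\hat g)\neq 0$ forces $\mathsf A(\hat g'\hat g e)$ to have leading index $-c$, so $\hat g'\hat g\in M_\Theta(\mathsf A,e;-c)$, and since each shearing is non-increasing on the pole order relative to $\widehat{\mathfrak p}_\Theta$ one also gets $\hat g'\in M_\Theta(\mathsf A,e;-c)$; and the process stops after finitely many steps because the orbit dimension strictly increases at each step.
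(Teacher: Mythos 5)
Your proposal takes a route entirely different from the paper's, but its central step is exactly the one you leave open, and that step is not bookkeeping --- it is the whole theorem. You need: after Proposition \ref{xx} and a shear by a cocharacter proportional to $H$, the new residue $N+v$ is nilpotent, lies in a strictly larger $G$-orbit, and the leading index stays exactly $-c$. Nothing in the available machinery delivers this. The shearing computation in the paper (the $\Lambda,\Upsilon$ discussion after Proposition \ref{z2}, following Babbitt--Varadarajan) does something different: at the critical exponent it \emph{lowers} the pole order (the new order is $-2\delta c+2\delta\Upsilon+2\delta-1$), and the new leading coefficient $N+\sum a^{(r)}_{\mu,\lambda}\hat Z[l_\mu]_\lambda$, a combination of $N$ with highest-weight vectors of $\ker(\mathrm{ad}_{Q})$, is in general \emph{not} nilpotent; in the classical algorithm that term is handed over to the non-nilpotent case and the group is cut down to a centralizer, and termination is measured by centralizer dimensions, not by driving the residue up to the regular nilpotent orbit. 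The abstract fact that the nilpotent Slodowy slice $(N+\ker(\mathrm{ad}_Q))\cap\mathcal{N}$ meets larger orbits gives no mechanism for realizing such a correction $v$ by gauge transformations lying in $M_\Theta(\mathsf{A},e;-c)$, nor for preventing the order from dropping below $-c$; moreover, to shear unramifiedly you need an \emph{integral} cocharacter proportional to $H$, which need not exist, and your reduction of the case $\chi_0(\hat g)=0$ (leading index $>-c$) to the main case is likewise only asserted.

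The paper's proof avoids this difficulty entirely, and the obstacle you flagged is precisely why: it argues by contradiction on the parahoric affine Springer fiber. Using the Cartan decomposition one passes to $N^0_{\Theta_I}(\mathsf{A},e;-c)$, which is finite-dimensional by Proposition \ref{cv} (a point where it is essential that $\mathsf{A}$ is a connection rather than a Higgs field); if no gauge transform produced a regular nilpotent residue, every top-dimensional irreducible component $\mathcal{I}$ would be swept out by fibers of a projection $\widehat{G}/\widehat{I}_\Theta\rightarrow\widehat{G}/\widehat{P}'$ attached to a strictly larger parabolic, forcing $w_{\hat g}\cdot[\mathcal{I}]=-[\mathcal{I}]$ and hence $w_0\cdot[\mathcal{I}]=0$ in Borel--Moore homology, contradicting the existence of a nonzero $W$-invariant vector in the class of the fiber (Frenkel--Zhu, Kazhdan--Lusztig). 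Unless you either prove your one-step orbit-increase claim or substitute an argument of this homological type, the proposal does not constitute a proof.
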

\begin{proof} It suffices to consider the case that $G$ is a connected simply-connected semisimple group and $\Theta_I$-order of $\mathsf{A}$ is $-c$, hence we need to show the existence of such $\hat g$ in $ M_{\Theta_I}(\mathsf{A},e;-c)$.

\verb"Step 1": Consider the following  component of $\widehat{G}$
\begin{align*}
 \widehat{G}^0=\{\hat g\in \widehat{G}:\hat g \textrm{ can be written as }
  \hat g=\hat g_1z^\Xi\hat g_2
\textrm{ for }\hat g_1, \hat g_2\in \widehat{I}_{\Theta},  \Xi\in (E_T)_*\},
\end{align*}
and define
\begin{align*}
  M^0_{\Theta_I}(\mathsf{A},e;-c)&=M_{\Theta_I}(\mathsf{A},e;-c)\bigcap\widehat{G}^0,\\
  N^0_{\Theta_I}(\mathsf{A},e;-c)&=M^0_{\Theta_I}(\mathsf{A},e;-c)/\widehat{I}_{\Theta}.
\end{align*}
Note that the constant term of   an  element of $\widehat{\mathfrak{i}}_\Theta$ lies in some  Borel subalgebra  $\mathfrak{b}$. For  $\hat g\in  M^0_{\Theta_I}(\mathsf{A},e;-c)$,
we have  $\chi_0(\hat g)$ is a  nilpotent element in $\mathfrak{g}$, hence it lies in the nil-radical  of  $\mathfrak{b}$. If it is  not a regular nilpotent element, then there is a subalgebra $\mathfrak{g}'\subset \mathfrak{p}_\Theta$ with $[\mathfrak{g}',\mathfrak{b}]\subset \mathfrak{b}$, where $\mathfrak{p}_\Theta$ is a parabolic subalgebra determined by the weight $\Theta$ as the constant term of $\widehat{\mathfrak{p}}_\Theta$,  such that $\chi_0(\hat g)$ lies in  the nil-radical  of parabolic subalgebra $\mathfrak{p}'=\mathfrak{b}+\mathfrak{g}'$.  Corresponding to $\mathfrak{p}'$, we have an extended parahoric subgroup $\widehat{P}'\subset \widehat{P}_\Theta$ whose Lie algebra is $\widehat{\mathfrak{i}}_\Theta+\mathfrak{g}'$. Pick $\hat g'\in \widehat{P}'$, then $\mathsf{A}(\hat g'\hat ge)$ has   a  $\Theta$-reduced representation
\begin{align*}
  \mathsf{A}(\hat g'\hat ge)=\hat C^{(r_0)}z^{r_0}dz+\hat C^{(r_1)}z^{r_1}dz+\cdots
\end{align*}
with $-c\leq r_0<r_1<\cdots$ satisfying $\hat C^{(r_i)}\in \widehat{\mathfrak{i}}_\Theta$.
Therefore, $\pi_{\hat g}^{-1}(\pi_{\hat g}(\hat g))\in N_{\Theta_I}(\mathsf{A},e;-c)$, where $\pi_{\hat g}:\widehat{G}/\widehat{I}_\Theta\rightarrow\widehat{G}/\widehat{P}'$ is the natural projection. Define
\begin{align*}
 \mathcal{I}_{\hat g}&=\{\hat h\in N_{\Theta_I}(\mathsf{A},e;-c): \pi_{\hat g}^{-1}(\pi_{\hat g}(\hat h))\in N_{\Theta_I}(\mathsf{A},e;-c) \},\\
  \mathcal{I}^0_{\hat g}&=\mathcal{I}_{\hat g}\bigcap  N^0_{\Theta_I}(\mathsf{A},e;-c).
\end{align*}
By Proposition \ref{cv}, $N^0_{\Theta_I}(\mathsf{A},e;-c)$  is finite-dimensional. Hence, let $d=\dim_\mathbb{C}N^0_{\Theta_I}(\mathsf{A},e;-c)$, and let $\mathcal{I}$ be a $d$-dimensional  irreducible component
of $N^0_{\Theta_I}(\mathsf{A},e;-c)$. By counter hypothesis that there is no $\hat g\in M^0_{\Theta_I}(\mathsf{A},e;-c)$ satisfying  $\chi_0(\hat g)$ is a regular nilpotent element in $\mathfrak{g}$, the above argument implies that for any irreducible component $\mathcal{I}$ we can find some  $\hat h\in M^0_{\Theta_I}(\mathsf{A},e;-c)$ such that $\mathcal{I}$ is exactly an irreducible component of  $\mathcal{I}^0_{\hat h}$.

\vspace{0.5em}

\verb"Step 2": Following \cite{fz}, we use   the arguments due to  Kazhdan-Lusztig \cite{kl}. The natural inclusion  $N_{\Theta_I}(\mathsf{A},e;-c)\hookrightarrow \widehat{G}/\widehat{I}_\Theta$ induces an inclusion $i:H_{2d}(N_{\Theta_I}(\mathsf{A},e;-c))\hookrightarrow H_{2d}(\widehat{G}/\widehat{I}_\Theta)$ between Borel-Moore
homology. It is well known that affine Weyl group $\widehat{W}$ naturally acts on the homology of $\widehat{G}/\widehat{I}_\Theta$ \cite{ka}. Denote $[\bullet]\in H_{2d}(N_{\Theta_I}(\mathsf{A},e;-c))$ the homology class represented by $\bullet$. By Step 1, there is $w_{\hat g}\in W$ such that $w_{\hat g}\cdot[\mathcal{I}]=-[\mathcal{I}]$.
Define $w_0=\sum\limits_{w\in W}w$, and $w'_{\hat g}=\sum\limits_{l(ww_{\hat g})>l(w)}w$, where $l(\bullet)$ denotes the length of  $\bullet\in W$, then we have
\begin{align}\label{vzz}
  w_0\cdot[\mathcal{I}]=(w'_{\hat g}+w'_{\hat g}w_{\hat g})\cdot[\mathcal{I}]=0.
\end{align}
On the other hand, by \cite[Proposition 7]{fz},
 $i([N_{\Theta_I}(\mathsf{A},e;-c)])$ is invariant under the action of the  affine Weyl group $\widehat{W}$ (also see \cite[Lemma 7]{kl}), and
$i([N_{\Theta_I}(\mathsf{A},e;-c)])$ has a non-zero invariant vector under the action of the affine group $W$ (see \cite[Lemma 8]{kl}). Then from the Cartan decomposition described in the proof of Proposition \ref{z5}, we see that there is also a non-zero $W$-invariant vector in $i([N^0_{\Theta_I}(\mathsf{A},e;-c)])$.  This contradicts with the above identity \eqref{vzz}.

We complete the proof.
\end{proof}

\begin{corollary}\label{xzz}
Let  $ \mathsf{A}$ be a formal connection on a   formal parahoric principal $G$-bundle $(\mathbf{P}, \Theta, \mathcal{P})$  with a $\Theta$-reduced representation
\begin{align*}
 \mathsf{A}(e)=\sum\limits_{r\geq -c}\hat A^{(r)}z^{r}dz
\end{align*}  under a $\Theta$-parahoric  trivialization $e$, where $c>1$. Then there is   $\hat g \in \widehat{G}$ such that $\mathsf{A}(\hat ge)$ has a $\Theta$-reduced representation under the trivialization $\hat ge$
\begin{align*}
  \mathsf{A}(e)= \sum\limits_{r\geq -c'}\hat B^{(r)}z^{r}dz
\end{align*}with $\hat B^{(r)}=\sum\limits_{\lambda+i\geq 0}\sum\limits_{i\in\mathbb{Z}}X^{(r)}_{\lambda,i}z^i$
satisfying
\begin{itemize}
  \item $c'=\left\{
             \begin{array}{ll}
              c , & \hbox{$\mathrm{Res}_0(\hat A^{(-c)})$\textrm{ is a nilpotent element in  } $\mathfrak{g}$;} \\
              c+1, & \hbox{otherwise,}
             \end{array}
           \right.
  $
  \item $\mathrm{Res}_0(\hat B^{(-c)})$ is a nilpotent element in $\mathfrak{g}$,
  \item all  $X^{(r)}_{\lambda,i}$' lie in a Borel subalgebra of $\mathfrak{g}$ except $X^{(-c)}_{0,0}=\mathrm{Res}_0(\hat B^{(-c)})$.
\end{itemize}

\end{corollary}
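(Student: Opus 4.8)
The plan is to combine the two reduction procedures already established in the paper. The first step is to apply the nilpotent/non-nilpotent dichotomy of Corollary~\ref{xzz}'s hypotheses: writing $P = \mathrm{Res}_0(\hat A^{(-c)})$ with Jordan decomposition $P = S + N$, if $S \neq 0$ we first invoke Theorem~\ref{pq} (via Proposition~\ref{xxx}) to reduce $\mathsf A$ to a $\Theta$-reduced representation whose leading residue lies in a Cartan subalgebra; then, treating $S$ as part of an irregular term, we would absorb $S$ into an auxiliary connection $\mathsf B = \hat Q + \mathsf A$ in the spirit of Corollary~\ref{zw}, so that the residual object has $\Theta$-order $-(c+1)$ with nilpotent constant residue. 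If $S = 0$ already, we keep $c' = c$. In either case we have arranged a $\Theta$-reduced representation with leading index $-c'$ and $\mathrm{Res}_0(\hat B^{(-c')})$ nilpotent, which handles the first two bullets.

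The second step is the heart of the matter and is where Theorem~\ref{xz} enters. Having put ourselves in the nilpotent-residue case, Proposition~\ref{z5} (after, if necessary, passing to a conjugate weight so that $\Theta$ is $W$-fixed, or more simply working at the Iwahori level as in the proof of Theorem~\ref{xz}) guarantees that $\chi_0(\hat g)$ stays nilpotent along $M_\Theta(\mathsf A, e; -c')$; then Theorem~\ref{xz} produces $\hat g' \in M_\Theta(\mathsf A, e; -c')$ with $\chi_0(\hat g'\hat g)$ a \emph{regular} nilpotent $Q_0$. Replacing $e$ by $\hat g'\hat g e$, we now have a $\Theta$-reduced representation whose constant residue $X^{(-c)}_{0,0} = Q_0$ is regular nilpotent, hence (as in the Jacobson--Morozov discussion preceding Proposition~\ref{z5}, cf.\ the calculations after Proposition~\ref{z2}) the centralizer $C_{\mathfrak g}(Q_0)$ is abelian and contained in a unique Borel subalgebra $\mathfrak b$ with $Q_0 \in [\mathfrak b, \mathfrak b]$.

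The third step is to run the nilpotent reduction Proposition~\ref{xx} with $P = Q_0$: it yields a further $\Theta$-parahoric trivialization under which every coefficient $X^{(r)}_{\lambda,i}$ (other than $X^{(-c)}_{0,0} = Q_0$ itself) satisfies $[Q, X^{(r)}_{\lambda,i}] = 0$, where $Q$ is the opposite regular nilpotent in the $\mathfrak{sl}_2$-triple $(Q_0, Q, H)$. Since $Q_0$ is \emph{regular}, $\ker(\mathrm{ad}\,Q) = C_{\mathfrak g}(Q)$ is a maximal abelian subalgebra consisting of nilpotent elements, sitting inside the Borel $\mathfrak b^-$ opposite to the $H$-grading; standard $\mathfrak{sl}_2$-representation theory then places all these coefficients, together with $Q_0$, inside a common Borel subalgebra of $\mathfrak g$. (One should take a little care that the exceptional coefficient $X^{(-c)}_{0,0} = Q_0$ lies in that same Borel, which it does since $Q_0$ is the highest root vector for the opposite order.) Assembling the three steps gives all three bullets of the corollary.

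The main obstacle I anticipate is the compatibility of the two leading-term normalizations across steps: Theorem~\ref{xz} only controls $\chi_0$, i.e.\ the \emph{constant} part $X^{(-c')}_{0,0}$ of the residue, whereas Proposition~\ref{xx} needs the full leading datum $\hat A^{(-c)}[l_1]_{l_1}$ (at the relevant sharp level $l_1$) to be regular nilpotent, and one must check that after Theorem~\ref{xz} we are genuinely in the situation $l_1 = 0$ with the regular nilpotent sitting in the constant slot — otherwise the induction on the sharp data $\mathbf S_\Theta(\mathsf A)$ in Proposition~\ref{xx} does not start from the right place. Reconciling this, and verifying that the gauge transformations from the earlier steps do not disturb the regular-nilpotency of $\chi_0$ established by Theorem~\ref{xz}, is the delicate bookkeeping that the proof must carry out; everything else is an application of results already in hand.
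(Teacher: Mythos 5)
Your Steps 2 and 3 are essentially the paper's argument (the corollary is deduced precisely from Theorem \ref{xz} plus Proposition \ref{xx}), but your Step 1 --- the case where $\mathrm{Res}_0(\hat A^{(-c)})$ is not nilpotent --- contains a genuine gap. Invoking Theorem \ref{pq} is not justified: its hypotheses require a decomposition $S=\sum S_i$ with $[\Theta,S_i]=0$ and $C_\mathfrak{g}(S_1,\dots,S_N)$ a Cartan subalgebra, none of which is available for an arbitrary $\mathsf{A}$. Worse, "absorbing $S$ into an auxiliary connection $\mathsf{B}=\hat Q+\mathsf{A}$'' in the spirit of Corollary \ref{zw} changes the object: any normal form you obtain for $\mathsf{B}$ says nothing about $\mathsf{A}$ itself, whereas the corollary demands a single gauge element $\hat g\in\widehat{G}$ applied to $\mathsf{A}$. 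The intended (and much simpler) move is to work at level $-(c+1)$: the identity coset lies in $M_\Theta(\mathsf{A},e;-(c+1))$ since the leading index $-c$ is $\geq -(c+1)$, and by the definition of $\chi$ one has $\chi_0(\mathrm{id})=0$, which is nilpotent. Hence the hypothesis of Theorem \ref{xz} is satisfied at that level with no preparation, and it produces $\hat g$ with $\chi_0(\hat g)$ a regular nilpotent; since $\chi_0$ is nonzero only when the leading index is exactly $-(c+1)$, this simultaneously forces $c'=c+1$ and the nilpotency of $\mathrm{Res}_0(\hat B^{(-c')})$. (In the nilpotent case one argues identically at level $-c$, giving $c'=c$.)

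Two smaller points. First, your parenthetical claim in Step 3 that the exceptional coefficient $X^{(-c)}_{0,0}=Q_0$ lies in the \emph{same} Borel as the other coefficients is false: the other coefficients lie in $C_\mathfrak{g}(Q)$, which sits in the Borel containing the opposite nilpotent $Q$ of the Jacobson--Morozov triple, and a regular nilpotent together with its opposite can never lie in a common Borel (their bracket generates a copy of $\mathfrak{sl}_2$, which is not solvable). Fortunately the statement explicitly exempts $X^{(-c)}_{0,0}$, so this claim is unnecessary; but as written it is an error, not a point of care. Second, the compatibility worry you raise at the end resolves immediately: once Theorem \ref{xz} makes the constant residue a nonzero regular nilpotent, the lowest sharp level is $l_1=0$ with $A^{(-c')}[0]_0=\mathrm{Res}_0(\hat B^{(-c')})$ regular nilpotent, so Proposition \ref{xx} applies with $P$ equal to that element, and its conclusion that $B^{(-c')}[l_1]_{l_1}$ is obtained from $P$ by a $G$-adjoint action preserves regular nilpotency; no further bookkeeping is needed.
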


\begin{proof}This is deduced from Proposition \ref{xx} and Theorem \ref{xz}.
\end{proof}

 \end{document}